\newcommand\restr[2]{{% we make the whole thing an ordinary symbol
\left.\kern-\nulldelimiterspace % automatically resize the bar with \right
#1 % the function
\vphantom{\big|} % pretend it's a little taller at normal size
\right|_{#2} % this is the delimiter
}}
\newcommand{\commentout}[1]{}
\newcommand{\R}{\mathbb{R}}
\newcommand{\proj}{\hat P^1_h}
\newcommand {\es} {\varepsilon}
\newcommand {\sg} {\sigma}
\newcommand {\vp} {\varphi}
\newcommand {\Chi} {{\bf \raise 2pt \hbox{$\chi$}} }
\newcommand {\dt}   {{\Delta t}}
\newcommand {\cae} { {\mathcal E} }
\newcommand {\f}   {\frac}
\newcommand {\p}   {\partial}
\newcommand {\ov}  {\overline}
\newcommand {\undel}  {\underline}
\newcommand{\norm}[1]{\left\lVert#1\right\rVert}
\newcommand{\abs}[1]{\left\lvert#1\right\rvert}
\newcommand*{\dd}{\mathop{\kern0pt\mathrm{d}}\!{}}
\newcommand{\seconde}{{\prime\prime}}
\newcommand{\scal}[2]{\left(#1 , #2\right)}
\newcommand{\lscal}[2]{\left(#1 , #2\right)^h}
\newcommand{\kk}{{k+1}}
\newcommand{\heps}{{h,\varepsilon}}
\newcommand*{\fespace}{V^h}
\newcommand*{\bas}{\chi}
\newcommand{\ie}{\textit{i.e.}\;}
\newcommand{\eg}{\textit{e.g.}\;}
\newcommand{\etal}{\textit{et al.}\;}
\def\XXint#1#2#3{{\setbox0=\hbox{$#1{#2#3}{\int}$ }
\vcenter{\hbox{$#2#3$ }}\kern-.6\wd0}}
\newlength\eqnspace
\newtheorem{theorem}{Theorem}%[section]
\newtheorem{lemma}[theorem]{Lemma}
\newtheorem{definition}[theorem]{Definition}
\newtheorem{remark}[theorem]{Remark}
\newtheorem{proposition}[theorem]{Proposition}
\newtheorem{corollary}[theorem]{Corollary}
\numberwithin{equation}{section}
\numberwithin{equation}{section}
\numberwithin{theorem}{section} % important bit
 \newcommand{\AP}[1]{{{#1}}}
 \newcommand{\setedges}{\mathcal{E}_h}
\title{\bfseries A nonnegativity preserving scheme for the relaxed Cahn-Hilliard equation with single-well potential and degenerate mobility }
\author[1,2]{Federica Bubba \thanks{Deceased on june 25th 2020}}
\author[1]{Alexandre Poulain \thanks{Current affiliation: CNRS, Univ. Lille, UMR 8524 - Laboratoire Paul Painlev\'e, F-59000 Lille, France. Email: \texttt{alexandre.poulain@univ-lille.fr}} \thanks{A.P. has received funding from the European Research Council (ERC) under the European Union's Horizon 2020 research and innovation programme (grant agreement No 740623)}}
\affil[1]{\small Sorbonne Universit\'{e}, CNRS,  Universit\'{e} de Paris, Inria, Laboratoire Jacques-Louis Lions, F-75005 Paris, France.}
\affil[2]{\small MOX, Dipartimento di Matematica, Politecnico di Milano, via Bonardi 9, 20133 Milano, Italy.}
\date{\today}
\begin{document}
\maketitle

\begin{center}
This is the preprint version of the published article:
Bubba, F., \& Poulain, A. (2022). A nonnegativity preserving scheme for the relaxed Cahn–Hilliard equation with single-well potential and degenerate mobility. ESAIM: Mathematical Modelling and Numerical Analysis, 56(5), 1741-1772. doi: https://doi.org/10.1051/m2an/2022050 (published under CC BY 4.0 license: https://creativecommons.org/licenses/by/4.0/).
\end{center}

\begin{abstract}
  We propose and analyze a finite element approximation of the relaxed Cahn-Hilliard equation with singular single-well potential of Lennard-Jones type and degenerate mobility that is energy stable and nonnegativity preserving. The Cahn-Hilliard model has recently been applied to model evolution and growth for living tissues. Although the choices of degenerate mobility and singular potential are biologically relevant, they induce difficulties regarding the design of a numerical scheme. We propose a finite element scheme, and we show that it preserves the physical bounds of the solutions thanks to an upwind approach adapted to the finite element method. We propose two different time discretizations leading to a non-linear and a linear scheme. Moreover, we show the well-posedness and convergence of solutions of the non-linear numerical scheme. Finally, we validate our scheme by presenting numerical simulations in one and two dimensions.
\end{abstract}
\maketitle

\noindent{\makebox[1in]\hrulefill}\newline
2010 \textit{Mathematics Subject Classification.} 35Q92, 65M60,  35K55, 35K65, 35K35
\newline\textit{Keywords and phrases.} Degenerate Cahn-Hilliard equation, single-well potential, continuous Galerkin finite elements, upwind scheme, convergence analysis.

\section{Introduction}
\label{sec:intro}
%-------------------------------------------
Being of fourth order, the Cahn-Hilliard equation does not fit usual softwares for finite elements. To circumvent this difficulty a relaxed version has been proposed in \cite{poulain_relaxation_2019} and the presentation of a finite element numerical scheme that preserves the physical properties of the solutions is the purpose of the present work.
The relaxed version of the Cahn-Hilliard (RDCH in short) equation reads
\begin{equation}
\begin{cases}
\dfrac{\partial n}{\partial t} = \nabla \cdot \left(b(n) \, \nabla\left( \vp + \psi^\prime_+(n) \right) \right),\\\\
-\sg \, \Delta \vp + \vp = -\gamma \, \Delta n + \psi^\prime_-\left(n-\dfrac{\sg}{\gamma}\vp\right),
\end{cases} t >0,  \, x \in \Omega,
\label{eq:CH-relax}
\end{equation}
and is set in a regular bounded domain $\Omega \subset \R^d$ with $d = 1, \, 2, \, 3$. It describes the evolution in time of the (relative) volume fraction $n \equiv n(t,x)$ of one of the two components in a binary mixture. The system is equipped with nonnegative initial data
\begin{equation*}
n(0,x) = n^0(x) \in H^1(\Omega), \qquad 0 \le n^0(x) < 1, \quad x \in \Omega,
\end{equation*}
and with zero-flux boundary conditions on the boundary $\partial \Omega$ of $\Omega$
\begin{equation*}
\dfrac{\p (n-\frac{\sigma}{\gamma} \vp)}{\p \nu} =  b(n) \, \dfrac{\p \left( \vp + \psi^\prime_+(n)\right)}{\p \nu} = 0, \quad t > 0, \, x \in \p \Omega,
\end{equation*}
where $\nu$ is the unit normal vector pointing outward $\p \Omega$.\newline
\AP{
As pointed at the end of \cite{poulain_relaxation_2019}, System~\eqref{eq:CH-relax} admits a rewriting that can proved to be useful for numerical simulations. Indeed, in the following of this article we use the fact that defining $w = n- \f \sg \gamma \vp$, System~\eqref{eq:CH-relax} can be rewritten as 
\begin{equation}
\begin{cases}
\dfrac{\partial n}{\partial t} = \nabla \cdot \left(b(n) \, \nabla\left( \f\gamma\sg n + \psi^\prime_+(n) - \f\gamma\sg w \right) \right),\\\\
-\sg \, \Delta w  + \f\gamma\sg\psi^\prime_-\left(w\right)+ w   = n,
\end{cases} t >0,  \, x \in \Omega.
\label{eq:CH-relax-equiv}
\end{equation}
}
System~\eqref{eq:CH-relax} was proposed in~\cite{poulain_relaxation_2019} as an approximation, in the asymptotic regime whereby the \emph{relaxation parameter} $\sigma$ vanishes (\emph{i.e.}, $\sigma \to 0$), of the fourth order Cahn-Hilliard equation~\cite{cahn_spinodal_1961, cahn_free_1958}. The Cahn-Hilliard (CH) equation describes spinodal decomposition phenomena occurring in binary alloys after quenching: an initially uniform mixed distribution of the alloy undergoes phase separation and a two-phase inhomogeneous structure arises. In its original form, the Cahn-Hilliard equation is written in the form of an evolution equation for $n$:
\begin{equation}
\p_t n = \nabla \cdot \left( b(n) \nabla \left( \psi^\prime(n) - \gamma \Delta n \right) \right), \quad t>0, \, x \in \Omega,
\label{eq:CH}
\end{equation}
with \AP{$n \in [0,1]$, where the states $n\equiv 0$ and $n\equiv 1$} denote the two pure phases arising after the mixture has undergone the phase separation process. Writing the flux as $\mathbf{J} = -b(n) \nabla \left( \f{\delta \cae[n]}{\delta n}\right)$, Equation~\eqref{eq:CH} can be interpreted as the conservative gradient flow of the free energy functional
\begin{equation*}
\cae[n](t) := \int_\Omega \left(\f{\gamma}{2} \left|\nabla n \right|^2 + \psi(n) \right) \, \text{d}x.
\end{equation*}
The \emph{homogeneous free energy} $\psi$ describes repulsive and attractive interactions between the two components of the mixture while the regularizing term $\frac{\gamma}{2}\left|\nabla n \right|^2$ accounts for partial mixing between the pure phases, \AP{leading to a \emph{diffuse interface} separating the states $n\equiv 0$ and $n\equiv1$}, of thickness proportional to $\sqrt{\gamma}$. The parameter $\gamma >0$ is related to the surface tension at the interface (see, \emph{e.g.},~\cite{miranville_2017}) and the function $b$ is called \emph{mobility}.\newline
In most of the literature, $\psi$ is a double-well logarithmic potential, often approximated by a smooth polynomial function, with minimums located at the two attraction points that represent pure phases (see, \emph{e.g.},~\cite{cherfils_2011,elliott_cahn-hilliard_1986,elliott_1989}). The mobility can be either constant~\cite{elliott_cahn-hilliard_1986, elliott_1989} or degenerate at the pure phases~\cite{barrett_finite_1999,elliott_cahn-hilliard_1996}. We refer to the introductory chapters~\cite{elliott_1989_chapter, novick-cohen_chapter_2008} and to the recent review~\cite{miranville_2017} for an overview of the derivation of the Cahn-Hilliard equation, its analytical properties and its variants. \par

Recently, the Cahn-Hilliard equation has been considered as a phenomenological model for the description of cancer growth; see, for instance,~\cite{agosti_self-organised_nodate, chatelain_2011,wise_three-dimensional_2008}. In this context, $n$ represents the volume fraction of the tumor in a two-phase mixture containing cancerous cells and a liquid phase, such as water and other nutrients. In biological contexts, a double-well potential appears to be nonphysical. In fact, as suggested by Byrne and Preziosi in~\cite{byrne_modelling_2004}, a single-well potential of Lennard-Jones type allows for a more suitable description of attractive and repulsive forces acting in the mixture. Following this intuition and building upon previous works~\cite{agosti_cahn-hilliard-type_2017, chatelain_2011}, in this paper we consider a single-well homogeneous free energy $\psi: [0,1) \to \R$, \AP{ that can be decomposed in a convex and a concave part $\psi_\pm$ such that
\begin{equation}
\psi(n) = \psi_+(n) + \psi_-(n),  \qquad \pm  \psi_\pm''(n) \geq 0, \qquad 0\leq n < 1.
\label{eq:psi-dec}
\end{equation}   
Additionally, we consider a singularity at $n=1$ to represent saturation by one phase (see \cite{byrne_modelling_2004}).
Hence, the potential is called {\em single-well logarithmic} and the singularity is contained in the convex part of the potential. Furthermore, we assume that 
\begin{equation}
\psi_+ \in C^2\big([0,1) \big), \quad \psi'_+(1)=\infty,
\label{eq:psi-plus}
\end{equation}
and we extend the smooth concave part defined on $[0, 1]$ to $\R$ with 
\begin{equation}
\psi_- \in C^3(\R) \qquad \psi_-, \; \psi^\prime_-,\;  \psi''_-,\;  \psi'''_- \quad  \text{are bounded and } \f{\sg}{\gamma} ||\psi_-^{\prime\prime}||_\infty < 1.
\label{eq:psi-minus}
\end{equation}
The latter assumption is necessary to bound the energy of the system from below. 
In particular, the example of potential that we have in mind is, for $n\in [0, 1)$,
\begin{equation}
    \psi_+(n) = -(1-n^*)\log(1-n) - \frac{n^3}{3}, \qquad \psi_-(n) = -(1-n^*)\frac{n^2}{2} -(1-n^*)n + k,
    \label{eq:potential}
\end{equation}
where in this case $\psi_+$ is convex if $n^*\le 0.7$.}
In the above form, $\psi$ models cell-cell attraction at small densities ($\psi^{\prime}(\cdot) < 0$ for $0<n \leq n^\star$ and $\psi^{\prime}(0) = 0$) and repulsion in overcrowded zones ($\psi^{\prime}(\cdot) > 0$ for $n \geq n^\star$); \emph{cf}. Figure~\ref{fig:single-well}. The quantity $n^\star>0$ represents the value of the cellular density at which repulsive and attractive forces are at equilibrium. With a potential of the form~\eqref{eq:potential}, the pure phases are represented by the states $n = 0$ and $n = 1$, where $n = 1$ is a singularity for $\psi$ in such a way to avoid overcrowding.
\AP{
In this work, we consider a degenerate mobility $b\in C^1([0,1]; \R^+)$, such that
\begin{equation}
b(0)=b(1)= 0,  \qquad b(n) >0 \text{ for }0<n< 1. 
\label{eq:assb}
\end{equation}   
Furthermore, the admissible mobility functions that we consider admit the decomposition
\[
    b(n) = b_1(n) b_2(n),
    \] 
with the extension on $\R$ defined by
\begin{equation}
    b_1(n) > 0, \text{ for } n>0, \quad b_1(n) = 0, \text{ for } n\le 0, 
    \label{eq:b1-def}
\end{equation}
    and
\begin{equation}
    \quad b_2(n) >0, \text{ for } n<1, \quad b_2(n) = 0, \text{ for } n\ge1. 
    \label{eq:b2-def}
\end{equation}
The typical expression in the applications we have in mind is 
\begin{equation}
b(n)=n (1-n)^2. 
\label{eq:mobility}
\end{equation}
Therefore, we can easily see that, considered as transport equations, System~\eqref{eq:CH-relax} imposes formally the property $0 \leq n \leq 1$. We also assume an additional property that there is some cancellation at $1$ such that 
\begin{equation}
b(\cdot) \psi^{\prime\prime}_+(\cdot) \in C(\R; \R^+).
\label{eq:assbpsi}
\end{equation}   
For the examples of mobility and potential we described above this assumption is satisfied.
}

The Cahn-Hilliard equation~\eqref{eq:CH} with the logarithmic single-well potential defined in~\eqref{eq:potential} and a mobility given by \eqref{eq:mobility} has been studied by Agosti~\textit{et al.} in~\cite{agosti_cahn-hilliard-type_2017}, where the authors prove well-posedness of the equation for $d\le 3$. For the relaxed version of the Cahn-Hilliard equation~\eqref{eq:CH-relax}, well-posedness and convergence to the original Cahn-Hilliard equation as $\sg\to 0$ have been proved in~\cite{poulain_relaxation_2019}. 

\AP{
We also mention here some important variants of the Cahn-Hilliard equation that have been used in the context of the modelling of tumors and in which the potential of interaction considered is of double-well form. In particular, Garcke \etal~\cite{garcke-2016} proposed a Cahn-Hilliard-Darcy system that models tumor growth and in which the cells can move due to chemotaxis, attractive and repulsive forces. This model also comprises the effect that cells are crawling in a porous medium (\ie the extra-cellular matrix). This effect is represented by a velocity term defined by Darcy's law. The tumor cells can also proliferate and die depending on the availability of nutrients. Garcke \etal~\cite{garcke_multiphase_2017} extended the model to the multiphase case to study the effect of necrosis.
In order to take into account the viscosity of the tissue, and since it is not a valid approximation to consider tissues as a porous medium,  Ebenbeck \etal~\cite{ebenbeck_cahn-hilliard-brinkman_2018} proposed a Cahn-Hilliard-Brinkman model in which Brinkman's law gives the flow velocity this time.     
}

\paragraph*{Summary of previous results and specific difficulties.} Numerous numerical methods have been developed to solve the Cahn-Hilliard equation~\eqref{eq:CH} with smooth and/or logarithmic double-well potential as well as with constant or degenerate mobility. 
Generally, a numerical scheme for the Cahn-Hilliard equation is evaluated by several aspects: \textit{i)} its capacity to keep the energy dissipation (energy stability) and the physical bounds of the solutions; \textit{ii)} if it is convergent, and if error bounds can be established; \textit{iii)} its efficiency; \textit{iv)} its implementation simplicity.
To meet the first point concerning the energy stability, several implicit schemes have been proposed. The main drawback of these methods is the necessity to use an iterative method to solve the resulting nonlinear system. To circumvent this issue, unconditionally energy-stable schemes have been proposed based on the splitting of the potential in a convex and a non-convex part. This idea comes from Eyre \cite{eyre_unconditionally_1997} and leads to unconditionally energy-stable explicit-implicit (i.e. semi-implicit) approximations of the model. For references on all the previous numerical methods discussed above, we refer the reader to the review paper \cite{Tierra-numericalCH-2015}.

For finite element approximations, most of these results are based on the second-order splitting
\begin{equation}
\begin{cases}
\p_t n = \nabla \cdot \left( b(n) \nabla \mu \right) ,\\
\mu = -\gamma \Delta n + \psi^\prime(n),
\end{cases}
\label{eq:CH-splitting}
\end{equation}
where, $\mu$ is called chemical potential; see, \emph{e.g.}, \cite{agosti_cahn-hilliard-type_2017, barrett_finite_1999, elliott_1989}.\newline
In~\cite{elliott_cahn-hilliard_1986}, Elliot and Songmu propose a finite element Galerkin approximation for the resolution of~\eqref{eq:CH} with a smooth double-well potential and constant mobility. The more challenging case of a degenerate mobility and singular potentials has been considered by Barrett \textit{et al.} in~\cite{barrett_finite_1999}, where the authors propose a finite element approximation which employs the second-order splitting~\eqref{eq:CH-splitting}. In particular, the authors provide well-posedness of the finite element approximation as well as a convergence result in the one-dimensional case. Numerical methods to solve the Cahn-Hilliard equation without the splitting technique~\eqref{eq:CH-splitting} have also been suggested. For instance, in~\cite{brenner_quadratic_2012} Brenner \textit{et al.} propose a $C^0$ interior penalty method, a class of discontinuous Galerkin-type approximations.\newline
Even though a single-well potential seems more relevant for biological applications of the Cahn-Hilliard equation, very few works focus on this case. In the already mentioned~\cite{agosti_cahn-hilliard-type_2017}, Agosti and collaborators propose a finite element method to solve Equation~\eqref{eq:CH} with the homogeneous energy given by~\eqref{eq:potential} and a degenerate mobility of the form~\eqref{eq:mobility}. As the authors remark, the main issues arising when considering a single-well logarithmic potential is that the positivity of the solution is not ensured at the discrete level, since the mobility degeneracy set $\{0;1\}$ does not coincide with the singularity set of the potential, \emph{i.e.}, $n = 1$. Therefore, the absence of cells represents an unstable equilibrium of the potential. In~\cite{agosti_cahn-hilliard-type_2017}, the authors design a finite element scheme which preserves positivity by the means of a discrete variational inequality, as also suggested in~\cite{barrett_finite_1999}. More recently, in~\cite{agosti_discontinuous_2019}, Agosti has presented a discontinuous Galerkin finite element discretization of the equation where, again, the positivity of the discrete solution is ensured thanks to a discrete variational inequality. 

\AP{
In this paper, we take advantage of the equivalent rewriting~\eqref{eq:CH-relax-equiv} of the RDCH system and use previous results obtained on the analysis of finite element schemes derived for degenerate parabolic equations. Namely, we use results obtained by Canc\`es and Guichard~\cite{cances} for a Control-Volume-Finite-Element (CVFE) used to simulate the anisotropic porous medium equation. In this work, the nonnegativity of the discrete solution is achieved by a suitable definition of the convective mobility that is discretized on control volumes. The compactness of time and space translates are achieved using discrete energy estimates, and the convergence analysis is obtained using the Frechet-Kolmogorov theorem. This work is the basis of a subsequent paper by Canc\`es \etal~\cite{cances:hal-01119210} in which a CVFE scheme for an anisotropic Keller-Segel system is analyzed. Therefore, using the fact that the RDCH system is close to the Keller-Segel model (\ie they are both composed of one degenerate parabolic equation and one elliptic equation), we combine the results of the previously presented works on CVFE schemes to perform our analysis. 
}
%\begin{figure}
%	\centering
%	\includegraphics[width= 0.6\textwidth]{Images/fn.png}
%	\caption{Single-well potential of Lennard-Jones type as in \eqref{eq:psi-plus-minus} with $n^\star = 0.6$.}
%	\label{fig:single-well}
%\end{figure}
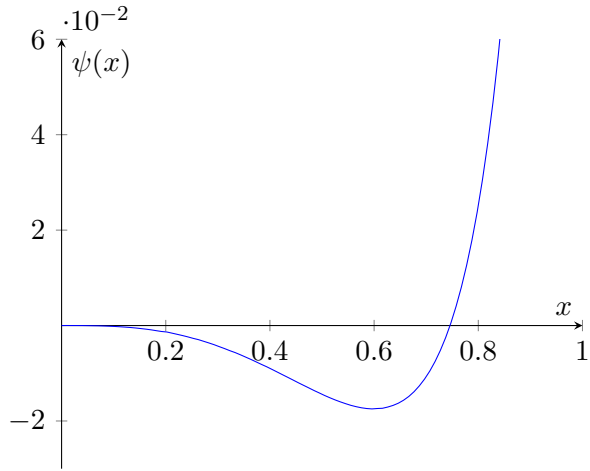
\begin{figure}
	\centering
\begin{tikzpicture}	
	\begin{axis}[
		axis lines = center,
		xlabel = $x$,
		ylabel = {$\psi(x)$},
		ymin=-0.03,
    	ymax=0.06,
		xmin=0,
    	xmax=1,
	]
	%Below the red parabola is defined
	\addplot [
		domain=0:1, 
		samples=100, 
		color=blue,
	]
	{-(1-0.6)*ln((1-x))- x^3/3  -(1-0.6)*x^2/2 - (1-0.6)*x};

	\end{axis}
\end{tikzpicture}
\caption{Single-well potential of Lennard-Jones type as in \eqref{eq:potential} with $n^\star = 0.6$.}
	\label{fig:single-well}
\end{figure}

\paragraph{Contents of the paper.} 
\AP{
The aim of this paper is to present and analyze two finite element approximations of the relaxed Cahn-Hilliard equation with single-well potential~\eqref{eq:potential} and degenerate mobility~\eqref{eq:mobility} in dimensions $d=1,2,3$. More in detail, we propose two different time discretizations leading to one nonlinear scheme on which we can prove analytically some important properties and one efficient linear scheme that retrieve these important properties during numerical simulations (but we can not prove them analytically). 
However, for the nonlinear scheme, we prove analytically: (i) well-posedness of the numerical approximation; (ii) nonnegativity of discrete solutions ensured by adapting the upwind technique to the finite element approximation method; (iii) dissipation of a discrete energy; (iv) convergence of discrete solutions.\newline
}
\commentout{
In System~\eqref{eq:CH-relax}, $\psi_{+}$ and $\psi_{-}$ are, respectively, the convex and concave part of $\psi$, and defined for $0 \le n < 1$ as
\begin{equation}
\psi_+(n)  := -(1-n^\star)\log(1-n)- \f{n^3}{3}, \quad \text{and } \quad \psi_-(n) :=  -(1-n^\star)\f{n^2}{2} - (1-n^\star)n,
\label{eq:psi-plus-minus}
\end{equation}
where $\psi_+$ is convex whenever $n^\star\le 1-\left(\f2 3\right)^3$. The main difference in the definition of the potential for the RDCH model is that the concave part of the potential must be defined on all $\R$ and  bounded. Therefore, we assume that a proper extension of $\psi_-$ given in \eqref{eq:psi-plus-minus} exists and satisfies 
\begin{equation}
	\ov \psi_-\in C^2_b(\R,\R),
	\label{eq:assumption-psi_minus}
\end{equation}
where $C^2_b$ denotes the space of bounded continuous functions with bounded continuous first and second derivatives. We also assume that $\ov \psi^\prime_-$ remains linear.\newline
}

The main novelty of our work is to propose an alternative to the second-order splitting~\eqref{eq:CH-splitting} by replacing the chemical potential $\mu$ by its relaxed approximation $\vp$, solution to a second order elliptic equation with diffusivity $0 < \sigma \ll \gamma$. The relaxed system is based on the analysis performed in~\cite{poulain_relaxation_2019}, where the authors prove well-posedness of the system as well as the convergence, as $\sigma \to 0$, of weak solutions of~\eqref{eq:CH-relax} to the ones of the original Cahn-Hilliard equation~\eqref{eq:CH}. For the analysis that follows, it is worth noticing that System~\eqref{eq:CH-relax} admits the energy functional
\begin{equation}
\mathcal{E}_\sigma[n](t) := \int_{\Omega} \Bigg{\{}  \frac{\gamma}{2} \abs{\nabla \left( n - \frac{\sigma}{\gamma} \vp \right)}^2 + \frac{\sigma}{2 \gamma} \abs{\vp}^2 + \psi_{+}(n) + \psi_{-}\left(n - \frac{\sigma}{\gamma} \vp \right) \Bigg{\}} \, \text{d}x,
\label{eq:energy-CH}
\end{equation}
that, as proved in~\cite{poulain_relaxation_2019}, is decreasing in time, \emph{i.e.},
\begin{equation*}
\frac{\text{d} \cae_\sigma[n]}{\text{d}t} = - \int_\Omega b(n) \abs{\nabla \left(\vp + \psi_{+}^{\prime}(n) \right)}^2 \text{d}x \leq 0, \quad t > 0.
\end{equation*}
We also notice that the convex/concave splitting of $\psi$ is different from the one employed, \emph{e.g.}, in~\cite{agosti_cahn-hilliard-type_2017} and is motivated by the need to retrieve energy dissipation as well as by the fact that we can take advantage of the linearity of $\psi_{-}^{\prime}$ to achieve regularity results on $w$. Furthermore, we observe that the relaxed Cahn-Hilliard system bears some similarities with the Keller-Segel model with additional cross diffusion, proposed and analyzed in~\cite{bessemoulin_2014,carrillo_2012}.

\AP{
In this work, we use another definition of the continuous solutions for the equivalent system~\eqref{eq:CH-relax-equiv}. To do so, we define the non-decreasing continuous function $\eta: \R \to \R$ by 
\begin{equation}
    \label{eq:def-eta}
    \eta(r) = \int_0^r \sqrt{b(s)} \,\dd s, \quad \forall r \in \R.
\end{equation}
Similarly, we also define the non-decreasing continuous function $\zeta:\R\to \R$, using the properties of $b(s)\psi_+''(s)$ stated in~\eqref{eq:assbpsi}, such that
\begin{equation}
    \label{eq:def-zeta}
    \zeta(r) = \int_0^r \sqrt{b(s)\psi_+''(s)} \,\dd s, \quad \forall r \in \R.
\end{equation}
With the definition of these new functions, we define the solutions of the RDCH system~\eqref{eq:CH-relax-equiv}.
\begin{definition}[Weak solutions of RDCH]\label{def:solution-RDCH}
    The functions $(n,w)$ such that 
    \begin{equation}
        \begin{cases}
            0\le n\le 1, \text{a.e. in } \Omega_T,  \\
            \eta(n) \in L^2(0,T;H^1(\Omega)),\quad \zeta(n) \in L^2(0,T;H^1(\Omega)),\\
            w \in L^\infty(\Omega_T) \bigcap L^2(0,T;H^1(\Omega)),
        \end{cases}
    \end{equation}
    are the weak solutions of the relaxed-degenerate Cahn-Hilliard model~\eqref{eq:CH-relax-equiv} in the following sense: For all test functions $\chi_1,\chi_2 \in C^\infty_c(\Omega_T,R^+)$ with $\chi_1(T,\cdot)= \chi_2(T,\cdot)=0$, we have
    \begin{equation}
        \begin{aligned}
            -\int_\Omega n^0 \chi_1 - \int_{\Omega_T} n \f{\p \chi_1}{\p t} + \f{\gamma}{\sigma}\int_{\Omega_T} \sqrt{b(n)}\nabla\eta(n) \nabla \chi_1 + \int_{\Omega_T} \sqrt{b(n)\psi_{+}^{''}(n)}\nabla\zeta(n) \nabla \chi_1 \\= \f{\gamma}{\sigma} \int_{\Omega_T} b(n) \nabla w \nabla \chi_1,
        \end{aligned}
        \end{equation}
        \begin{equation}
            \sg\int_{\Omega_T} \nabla w\nabla\chi_2 +  \int_{\Omega_T} \left(\f\gamma\sigma\psi_-^\prime(w) + w\right) \chi_2 = \int_{\Omega_T} n \chi_2.
        \end{equation}
\end{definition}
}

\AP{
This paper is organized as follows. 
Section~\ref{sec:notation} presents the details of the Control-Volume-Finite-Element framework and the assumptions on the mesh.
In Section~\ref{sec:scheme}, we introduce a nonlinear semi-implicit finite element approximation of the relaxed Cahn-Hilliard equation~\eqref{eq:CH-relax}. The definition of the upwind mobility coefficient is given using a constant approximation of the mobility term on specific control volumes. 
Subsection~\ref{sec:positivity} and Subsection~\ref{sec:apriori} are dedicated to the presentation of the properties of this nonlinear scheme, such as the nonnegativity of the discrete solutions, mass conservation, and apriori estimates.
The existence of discrete solutions to this problem is given in Subsection~\ref{sec:existence}.  
In Subsection~\ref{sec:compact} and Subsection~\ref{sec:convergence}, we derive compactness estimates and prove the convergence of the discrete solutions to the weak solutions of the continuous relaxed Cahn-Hilliard model defined in Definition~\ref{def:solution-RDCH}. Then, Section \ref{sec:explicit} is devoted to the description of an efficient linear semi-implicit scheme. The existence and the nonnegativity of a unique solution are given for a suitable upwind approximation of the mobility coefficient.
Finally, in Section~\ref{sec:simulations}, we present numerical simulations using our linear semi-implicit scheme in one and two dimensions. These numerical simulations are in good agreement with previous numerical results obtained for the degenerate Cahn-Hilliard equation with single-well logarithmic potential. }
\section{Notations and assumptions}
\label{sec:notation}
We first set up the notations we will use in the numerical discretization and recall some well-known properties we employ in the analysis.
\begin{sloppypar}
\paragraph{Geometric and functional setting.} Let $\Omega \subset \mathbb{R}^d$ with $d=1,2,3$ be a polyhedral domain. We indicate the usual Lebesgue and Sobolev spaces by respectively $L^p(\Omega)$, $W^{m,p}(\Omega)$ with ${H^m(\Omega) := W^{m,2}(\Omega)}$, where $1 \le p \le +\infty$ and $m \in  \mathbb{N}$. We denote the corresponding norms by $||\cdot||_{m,p,\Omega}$, $||\cdot||_{m,\Omega}$ and semi-norms by $|\cdot|_{m,p,\Omega}$, $|\cdot|_{m,\Omega}$. The standard $L^2$ inner product will be denoted by $(\cdot,\cdot)_\Omega$ and the duality pairing between $(H^1(\Omega))'$ and $H^1(\Omega)$ by $<\cdot,\cdot>_\Omega$.\newline
Let $\mathcal{T}^h$, $h>0$ be a conformal mesh on the domain $\Omega$ which is defined by $N_\text{el}$ disjoint piecewise linear mesh elements, denoted by $K\in \mathcal{T}^h$, such that $\overline{\Omega} = \bigcup_{K\in \mathcal{T}^h} \overline{K}$. The elements are triangles for $d=2$ and tetrahedra for $d=3$. We let $h := \max_{K} h_K$ refers to the level of refinement of the mesh, where $h_K := \text{diam}(K)$ for $K \in \mathcal{T}^h$. We define by $\kappa_K$ the minimal perpendicular length of $K$ and $\kappa_h = \min_{K\in \mathcal{T}^h} \kappa_K$.
We assume that the mesh is quasi-uniform, \emph{i.e.}, it is shape-regular and there exists a constant $C>0$ such that 
\end{sloppypar}
\begin{equation*}
h_K \ge C h, \quad \forall K \in \mathcal{T}^h.
\label{eq:numquasi-uniform-num}
\end{equation*} 
Moreover, we assume that the mesh is acute, \textit{i.e.}, for $d = 2$ the angles of the triangles do not exceed $\frac{\pi}{2}$ and for $d=3$ the angle between two faces of the same tetrahedron do not exceed $\frac{\pi}{2}$.\newline
We define the set of nodal points $J_h=\{x_j\}_{j=1,\dots,N_h}$, where the number of nodes is $N_h := \text{card}(J_h)$, and we assume that each $x_j$ is a vertex of a simplex $K \in \mathcal{T}^h$. We denote by $\Lambda_i$ the set of nodes connected to the node $x_i$ by an edge and $G_h$ the maximal number of connected nodes, \ie $G_h = \max_{x_i\in J_h} \Lambda_i$. For two nodes $x_i, x_j$, if they are connected by an edge, we denote this latter $e_{ij}$.

We also define the barycentric dual mesh associated to $\mathcal{T}^h$. On each element $K\in \mathcal{T}^h$, the barycentric coordinates of an arbitrary point $x\in K$ are defined by the real numbers $\lambda_i$ with $i=1,\dots,n_K$ such that
\[
    \sum_{i=1}^{n_K} \lambda_i = 1,\quad \text{and} \quad x = \sum_{i=1}^{n_K}\lambda_i P_i,
\]
where $n_K$ is the number of nodes of the element $K$.
We define the barycentric subdomains associated to the vertex $P_i\in K_k$ (where $K_k$ refers to the $k$-th element of $\mathcal{T}^h$ and $k=1,\dots,N_\text{el}$), by
\[
    D_i^k := \bigcap_{\substack{j=1\\
                        j\neq i}}^{n_K} \{x; x\in K_k\text{ and } \lambda_{j}(x) \le \lambda_i(x) \}.
\]
Therefore, for each node of the mesh $\mathcal{T}^h$, we define the barycentric cell of the dual mesh
\[
    D_i := \bigcup_{k} \{ D_i^k; \, K_k \in \mathcal{T}^h \, \text{such that} \, x_i \in K_k\}.
\]

 We also add another subdivision of the mesh that relies on the definition of the barycentric cells and that will be useful to define the upwind approximation of the mobility. Using the barycentric coordinates,  we subdivide each element $K\in \mathcal{T}^h$ in $d+1$ subdomains, such that, for $i = 1,2,3$, $j=2,3$, and $i\neq j$, we have the subdomain
 \[
 \tilde D_{ij}^K = \{x\in K | \lambda_i, \lambda_j \ge \lambda_k,\quad k\neq i,j \}.
 \]
 In the following of the article, we use the terminology \textit{Diamond cell} for the two $\tilde D_{ij}^K $ that share the edge $e_{ij}$. 

 To illustrate what are these subdomains, we represent graphically what is $\tilde D_{ij}^T$ on Figure \ref{fig:subdomain} as well as what is a diamond cell for $d=2$.
 \begin{figure}
     \centering
     \includegraphics[width=0.45\linewidth]{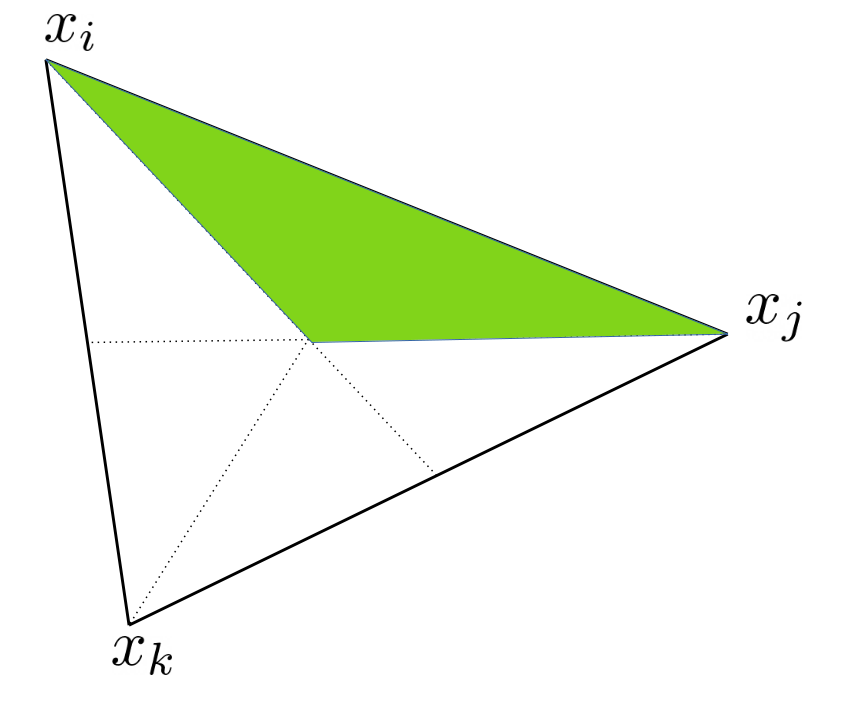}
     \includegraphics[width=0.4\linewidth]{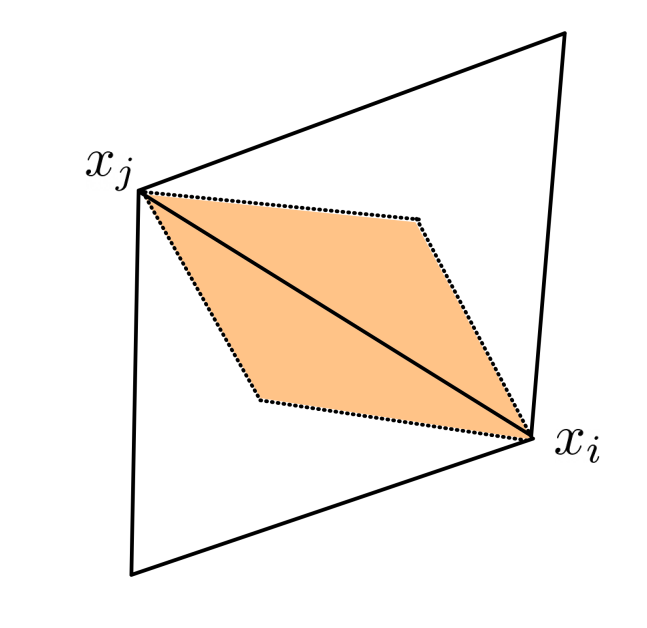}
     \caption{Illustration of the subdomain $\tilde D_{ij}^K$ (left, green) and of a diamond cell (right, orange) for $d=2$.}
     \label{fig:subdomain}
 \end{figure}

We introduce the set of piecewise linear functions $\chi_j \in C(\overline{\Omega})$ associated with the nodal point $x_j \in J_h$, that satisfies $\chi_j(x_i) = \delta_{ij}$, where $\delta_{ij}$ is the Kronecker's delta. 
We introduce the $\mathbb{P}^1$ conformal finite element space $V^h$ associated with $\mathcal{T}^h$, where $\mathbb{P}^1(K)$ denotes the space of polynomials of order $1$ on $K$: 
\begin{equation*} 
    V^h := \left\{ \chi \in C(\overline{\Omega}): \restr{\chi}K \in \mathbb{P}^1(K), \quad \forall K \in \mathcal{T}^h \right\} \subset H^1(\Omega).
\end{equation*}
%The set $\{\chi_j\}_{j=1,\dots,N_h}$ is the standard finite element basis for the discrete space $S^h$ defined as
%\begin{equation*}
%S^h := \left\{ \chi \in C^0(\overline{\Omega}):  \restr{\chi}K \in \mathbb{P}^1(K), \quad \forall K \in \mathcal{T}^h \right\} \subset H^1(\Omega),
%\end{equation*}
%that is, the $\mathbb{P}^1$ conformal finite element space associated with $\mathcal{T}^h$, where $\mathbb{P}^1(K)$ denotes the space of polynomials of order $1$ on $K$. 
Furthermore, we let $K^h$ be the subset containing the nonnegative elements of $V^h$, namely
\begin{equation*}
K^h:= \{ \chi \in V^h:  \chi \geq 0 \quad \text{in } \Omega \}.
\end{equation*}
We denote by $\pi^h \colon C(\overline{\Omega}) \to V^h$ the Lagrange interpolation operator corresponding to the discretized domain $\mathcal{T}^h$, defined as
\begin{equation*}
	\pi^h f(x) = \sum_{j =1}^{N_h} f(x_j) \, \bas_j(x), \quad f \in C(\overline{\Omega}).
\end{equation*}
We also use the lumped space $\hat V_h$ defined by
\[
    \hat V^h := \{\hat \bas : \text{ piecewise constant over barycentric domains, \ie } \hat \bas(x) = \hat \bas (x_i),\, \forall x \in D_i \}.
\]
Defining  $\hat \bas_i \in L^\infty(\Omega)$ the characteristic function of the barycentric domain $D_i$ associated with each node $x_i$ (for $i=1,\dots,N_h$) of the mesh, we easily see that $\{ \hat \bas_j\}_{j=1,\dots, N_h}$ forms a basis of $\hat V^h$. Adding the property $\hat \bas_i(x_j) = \delta_{ij}$, we see that the two basis $\{ \hat \bas_j\}_{j=1,\dots, N_h}$ and $\{\chi_j\}_{j=1,\dots,N_h}$ are associative, \ie $\bas(x_i)= \hat \bas(x_i)$ for all $x_i\in J_h$. 
Therefore, we also define the lumped operator  ${\hat \pi^h \colon C(\overline{\Omega}) \to \hat V^h}$ by
\[
    \hat \pi^h f(x) = \sum_{j =1}^{N_h} f(x_j) \, \hat \bas_j(x), \quad f \in C(\overline{\Omega}).
\]

On $C(\overline{\Omega})$ we define the approximate scalar product as
\begin{equation*}
(f,g)^h := \int_\Omega  \pi^h \left(f(x) \, g(x)\right) \, \text{d} x, \quad f, g \in C(\overline{\Omega}).
\end{equation*}
Furthermore, since $\forall f, g \in C(\overline{\Omega})$, we have
\[
    \begin{aligned}
    \int_\Omega \pi^h \left(f(x) \, g(x)\right) \, \text{d} x &= \sum_{K \in \mathcal{T}^h} \int_K \pi^h \left(f(x) \, g(x)\right) \, \text{d} x, \\
    &=  \f{1}{d+1}\sum_{K \in \mathcal{T}^h} \abs{K} \sum_{x_i \in K} f(x_i)\, g(x_i),\\
    &= \int_\Omega \hat \pi^h \left(f(x) \, g(x)\right) \, \text{d} x.
    \end{aligned}
\]
where $x_i \in K$ denotes the vertices of the element $K$.
We denote the corresponding discrete semi-norm as $\abs{\cdot}_h = \left[\lscal{\cdot}{\cdot} \right]^{\f12}$.

We denote by $P_h^0: L^2(\Omega)\to \fespace$ the $L^2$ projection operator and by $\hat P_h^0: L^2(\Omega)\to \fespace$ its lumped version, defined by
\begin{equation*}
\begin{aligned}
\scal{P_h^0 v}{\bas} &= \scal{v}{\bas} \quad \forall v\in L^2(\Omega)\text{ and } \forall \bas \in \fespace,\\
\lscal{\hat P_h^0 v}{\bas} &= \scal{v}{\bas} \quad \forall v\in L^2(\Omega)\text{ and } \forall \bas \in \fespace.
\end{aligned}
\end{equation*}

\commentout{
We define a lumped $H^1$-projection operator onto $V^h$, \ie $\hat P^1_h:H^1(\Omega) \to V^h$ such that $\forall v\in H^1(\Omega)$, we have
\begin{equation}
    \scal{\nabla \hat P_h^1(v)}{\nabla \chi} = \scal{ \nabla v}{\nabla \chi}, \quad \text{and}\quad \lscal{\hat P_h^1(v)}{\chi} = \lscal{v}{\chi},\quad  \chi \in V^h. 
    \label{eq:def-H1-proj}
\end{equation}
From this definition, we know that 
\[
    \hat P_h^1(v) \to v,\quad \text{in} \quad H^1(\Omega).  
\]
}
\commentout{
Furthermore,  we introduce the inverse Laplacian operator $\mathcal{G}: \mathcal{F}\to S$ as an application from $\mathcal{F} = \{v\in \left(H^1(\Omega)\right)' : <v,1> = 0 \}$ to $S = \{v\in H^1(\Omega) : (v,1) = 0 \}$, that satisfies
\begin{equation}
(\nabla \mathcal{G} v, \nabla \eta ) = <v,\eta> \quad \forall \eta \in H^1(\Omega).
\label{eq:numinverse-laplacian}
\end{equation}
The well-posedness of~\eqref{eq:numinverse-laplacian} follows immediately from the Lax-Milgram theorem and the Poincar\'e inequality. Therefore, a norm on $\mathcal{F}$ can be defined via
\[
\norm{v}_{\mathcal{F}}:= \abs{\mathcal{G}v}_1 \equiv \left<v,\mathcal{G}v \right>^{\f12}\quad \forall v\in \mathcal{F}.
\]
The discrete counterpart of $\mathcal{G}$ is denoted by $\hat{\mathcal{G}}^h : \mathcal{F}^h \to S^h$ and satisfies
\begin{equation*}
\scal{\nabla \hat{\mathcal{G}}^h v}{\nabla \chi} = \lscal{v}{\chi} \quad \forall \chi \in \fespace,
\end{equation*}
where $S^h:=\{ v^h\in \fespace : \scal{v^h}{1}=0\}$ and $\mathcal{F}^h:= \{ v\in C(\ov\Omega): \lscal{v}{1}=0\}$.
}
\paragraph{Inequalities.}
We summarize important inequalities that will be used later on for the analysis of the numerical schemes.  
\commentout{
We start by recalling the well-known Sobolev interpolation result. Letting $p\in [1,\infty]$, $m \ge 1$,
\begin{equation*}
r\in\begin{cases}
[p,\infty]\quad &\text{if    } m-\f dp>0,\\
[p,\infty) \quad &\text{if    } m-\f dp=0,\\
[p,-\f{d}{m-(d/p)}] & \text{otherwise,}
\end{cases}  
\end{equation*}
and $\mu = \f dm \left(\f 1p- \f 1r\right)$, there exists a constant $C = C(\Omega,p,r,m) > 0$ such that 
\begin{equation}
\norm{v}_{0,r} \le C \norm{v}_{0,p}^{1-\mu}\norm{v}^\mu_{m,p} \quad \forall v\in W^{m,p}(\Omega).
\label{eq:numsobolev-interpolation}
\end{equation}
}
We will use the following inequalities (see, \emph{e.g.},~\cite{quarteroni_numerical_1994}):
\begin{align}
    \abs{\chi}_{m,p_2} &\le C h^{-d\left(\f{1}{p_1}-\f{1}{p_2} \right)}\abs{\chi}_{m,p_1} \quad \forall \chi \in V^h, 1\le p_1\le p_2 \le  +\infty, m=0,1; \label{eq:ineq-comparaison-norm} \\
    \norm{\chi}_0^2& \le \lscal{\chi}{\chi} \le (d+2)\norm{\chi}^2_0 \quad \forall \chi \in V^h \label{eq:norm-lumped-scalar-prod}.
\end{align}
Concerning the interpolation operator, the following result holds~\cite{brenner_2008_fe}:
    \begin{equation}
        \lim_{h\to 0}\norm{v-\pi^h(v)}_{0,\infty}=0 \quad \forall v \in C(\ov \Omega),
        \label{eq:ineq-lumped-scalar-product2}
    \end{equation}
and we have \cite{vidar_1997_galerkin},
\begin{equation}
    \abs{\lscal{v}{\eta} -\scal{v}{\eta}} \le C h^2 \norm{\nabla v}_0 \norm{\nabla \eta}_0 ,\quad  v,\eta \in \fespace.
    \label{eq:conv-lump-scal}
\end{equation}
\commentout {
Furthermore, if $d=1$ (see for example \cite{vidar_1997_galerkin}),
    \begin{align}
        & \abs{v - \pi^h(v)}_{m,p} \le C h^{1-m}\abs{v}_{1,p} \quad \forall v \in W^{1,p}(\Omega),\quad m=0,1, \quad 1\le p <  +\infty; \label{eq:numineq-interp-Lp} \\
        &\norm{v - \pi^h(v)}_{L^\infty(\Omega)} + h \abs{v - \pi^h(v)}_{1,\infty} \le C h^2\abs{v}_{1,\infty} \quad \forall v \in H^1(\Omega) \label{eq:numineq-interp-Linf},\\
        &\abs{\lscal{v}{\eta} -\scal{v}{\eta}} \le C\left(\abs{v-\pi^h v}_0+h\abs{v}_0 \right) \norm{\eta}_1,\quad \text{for} \quad v\in C\left(\ov \Omega \right), \eta \in H^1(\Omega). \label{eq:numconv-lump-scal}
\end{align}
}
For the $L^2$ projection operator, the following inequality holds~\cite{brenner_2008_fe}
\begin{equation}
    \norm{v-P_h^0 v}_{0} + h\abs{v-P_h^0 v}_1 \le C h^{m} \abs{v}_{m} \quad v\in H^m(\Omega), \quad m=1,2.
    \label{eq:numineq-L2-proj}
\end{equation}
\commentout{
Finally, the discrete inverse laplacian operator satisfies 
\begin{equation}
    \lscal{v}{\chi} \equiv \scal{\nabla \hat{\mathcal{G}}^h v}{\nabla \chi} \le \abs{\hat{\mathcal{G}}^h v}_1 \abs{\chi}_1\quad \forall v \in \mathcal{F}^h,\chi \in \fespace.
    \label{eq:numineq-green-ope}
\end{equation}
}
\paragraph{Finite element matrices.} 
We define $M$ and $Q$ respectively the mass and stiffness matrix. $M_l$ is the lumped mass matrix, that is the diagonal matrix where each coefficient is the sum of the associated row of $M$
\begin{equation*}
	M_{ij} = \int_{\Omega} \chi_i  \chi_j \, \text{d}x, \quad \text{ for } i,j = 1, \dots, N_h,
\end{equation*} 
\begin{equation*}
	Q_{ij} = \int_{\Omega} \nabla \chi_i \cdot \nabla \chi_j \, \text{d}x, \quad \text{ for } i,j = 1, \dots, N_h,
\end{equation*} 
\begin{equation*}
	M_{l,ii} := \sum_{j=1}^{N_h} M_{ij}, 	 \quad \text{ for } i,j = 1, \dots, N_h.
\end{equation*} 

\AP{
We recall some important properties of the stiffness matrix. We start by 
\begin{equation}
    \sum_{j \neq i} Q_{ij} = Q_{ii},
    \label{eq:Qii-Qij}
\end{equation}
Furthermore, from the fact that the triangulation is of acute type, we know from \cite{fujii_1973_fe} that 
\begin{equation}
    Q_{ij} \le 0 \text{ for } i\neq j, \quad \text{and} \quad 0 < \f{\abs{Q_{ij}}}{M_{l,ii}}\le \f{Q_{ii}}{M_{l,ii}} \le \f{(d+1)}{\kappa_h^2}.
    \label{eq:estimate-ratio-Q-ML}
\end{equation}
}
\commentout{
\section{Definition of the regularized problem}
\label{sec:regularized}
As for the continuous case (see \cite{poulain_relaxation_2019}), we use a regularization of the model. The resulting problem is easier to analyze since the singularity contained in the potential $\psi_+$ is smoothed out.

\paragraph*{Regularization of the convex part of the potential.}
Similarly to~\cite{poulain_relaxation_2019}, we start by defining a regularized version of the RDCH model~\eqref{eq:CH-relax-equiv}. 
The goal of this regularization is to define the potential on all $\R$. 
The unknowns and the functions that depend on the regularization are denoted by a subscript $\es$.
Hence, we denote by $ \psi_{+,\es}$ the regularized convex part of the potential.

Thus, considering this small positive parameter $0 < \varepsilon \ll 1$, we smooth out the singularity contained in $\psi_+$ (located at $n=1$, see~\eqref{eq:psi-plus-minus}), \ie we define for all $n\in\R$
\begin{equation}
\psi_{+,\varepsilon}^{\prime \prime}(n) :=
\begin{cases}
 \psi_+^{\prime \prime}(1-\varepsilon) &\text{ for } n \ge 1-\varepsilon, \\
 \psi_+^{\prime \prime}(\varepsilon) &\text{ for } n \le \varepsilon, \\
 \psi_+^{\prime \prime}(n)  &\text{ otherwise}.
\end{cases}
\label{eq:numextension-psip}
\end{equation}
We can easily prove that for all $n\ge 1-\es$,  $\psi_{+,\varepsilon}$ is bounded from below. 
%This is a necessary step to prove that the energy is finite at all time.\newline
\begin{comment}

Using the expansion $\log(s) \approx \log(\varepsilon) + \frac{s-\varepsilon}{\varepsilon} + \frac{(s-\varepsilon)^2}{2 \, \varepsilon^2}$ as $0 < s < \varepsilon$, we calculate the expansion of $\psi_{+,\varepsilon}$ in the neighborhood of $(1-\varepsilon)$ for $n \geq 1- \varepsilon$ and $\varepsilon<1$
\begin{multline*}
    \psi_{+,\varepsilon}(n) :=
        -(1-n^\star)\log(\es) -\f{\left(1-\es\right)^3}{3} +\frac{3}{2}(1-n^\star) + (1-n) \left((1-\es)^2 -\frac{2}{\es}(1-n^\star)\right) \\-(1-\es)\left(\es(1-n)-(1-n)^2 \right) + \frac{(1-n^\star)}{2\es^2}(1-n)^2. 
\end{multline*}
Since the coefficient $(1-\es)^2 -\frac{2}{\es}(1-n^\star)$ is negative, we just need to focus on the function 
\[
g(n) = -(1-\es)\left(\es(1-n)-(1-n)^2 \right) + \frac{(1-n^\star)}{2\es^2}(1-n)^2.
\]
This latter increases monotonically for $n\ge1$ and is bounded from below by a constant.
\end{comment}
\begin{comment}
All terms in \eqref{eq:numextension-psip} are bounded from below, except $- \f{n^2(1-\es)}{2}$. However, since 
\[
   g(n) =  \f{(n-1)^2(1-n^\star)}{2\es^2} - \f{n^2(1-\es)}{2},
\]
 is convex and reaches a minimum value at 
\[
    n = \f{1-n^\star}{1-n^\star-4\es^2(1-\es)}  ,
\]
it exists a small parameter $\es_0 >0$, such that $\forall \es<\es_0$ the regularized potential $\psi_{+,\varepsilon}$ remains bounded from below for $n\ge 1-\es$.
\end{comment}
Indeed, it exists a positive finite constant $C_1$ such that
\begin{equation}
    \psi_{+,\es}(n) > \frac{1-n^\star}{2\es^2}\left( \left[n-1\right]_+\right)^2-C_1, \qquad \forall n \in \R,
    \label{eq:numineq-potential-reg}
\end{equation}
where $[\cdot]_+ = \max\{\cdot,0\}$.\newline
Then, using the assumption on $\ov\psi_-$~\eqref{eq:assumption-psi_minus}, 
\begin{comment}

\begin{equation}
    \overline{\psi}_-(n) := \begin{cases}
        \psi_-(1) + (n-1)\psi_-^\prime(1) + \f 1 2 (n-1)^2\psi^{\prime\prime}_-(1) &\text{ for } n\ge 1,\\
        \psi_-(n)&\text{ for } n\le 1.
    \end{cases}
    \label{eq:numextension-psi_-}
\end{equation}
\end{comment}
%Combining \eqref{eq:numextension-psi_-} and \eqref{eq:numextension-psip}, we define 
we know that it exists a constant $C_2>0$ such that the regularized potential $\psi_\es(n) = \psi_{+,\es}(n) + \ov{\psi}_-(n)$ satisfies 
%for all $n\in \R$. Using the in\ovequality \eqref{eq:numineq-regularization-psi_+} and the definition \eqref{eq:numextension-psi_-}, it exists a constant $C_2>0$ such that
\begin{equation}
    \psi_{\es}(n) \ge \frac{1-n^\star}{2\es^2}\left( \left[n-1\right]_+\right)^2-C_2,  \qquad \forall n \in \R.
\end{equation}
Altogether, we obtain 
\begin{equation}
	\psi_{\es} \in C^2(\R,\R).
	\label{eq:numcontinuity-psi-reg}
\end{equation}
}

\section{Nonlinear CVFE scheme}
\label{sec:scheme}
\subsection{Description of the scheme}

% Description of the method and the scheme
Given $N_T \in \mathbb{N}$, let $\Delta t := T / N_T$ be the constant time step size and $t^k := k \Delta t$, for ${k= 0, \dots, N_T - 1}$. We consider a partitioning of the time interval $[0,T] = \bigcup_{k=0}^{N_T-1}[t^k,t^{k+1}]$ with $0<T<+\infty$.
We approximate the continuous time derivative by $\f{\p n_h}{\p t} \approx \f{n^{k+1}_h - n^k_h}{\dt}$. 
Using the finite element approximations of $n$ and $w$, 
\begin{equation*}
    n_h^k(x) := \sum_{j =1}^{N_h} n^k_j \chi_j(x), \quad \text{and} \quad w_h^k(x) := \sum_{j =1}^{N_h} w^k_j \chi_j(x),
\end{equation*}
where $\{n^k_j\}_{j=1,\dots,N_h}$ and $\{w^k_j\}_{j=1,\dots,N_h}$ are the unknown degrees of freedom, we introduce the following finite element approximation of System~\eqref{eq:CH-relax}.

\textbf{Problem $P$: }For each $k = 0, \dots, N_T - 1$, find $\{n_h^{k+1},w_h^{k+1}\}$ in $K^h \times \fespace$ such that $\forall \chi_1,\chi_2 \in \fespace$
\begin{equation}
    \begin{aligned}
    \left(\frac{n_h^{k+1}-n_h^k}{\Delta t},\chi_1\right)^h  + \scal{\left(\f\gamma\sigma \hat B(n_h^{\kk}) +\widehat{(B\psi_+'')}(n^\kk_h) \right)\nabla n_h^\kk}{\nabla \chi_1}\\
    = \f\gamma\sigma\scal{\tilde B(n_h^{\kk}) \nabla  w_h^{k+1}}{\nabla \chi_1}  , \label{eq:discrete-n} 
    \end{aligned}
\end{equation}
\begin{equation}
    \sigma\left(\nabla w_h^{k+1},\nabla \chi_2\right) + \left(w_h^{k+1} + \f\gamma\sg \psi_-^\prime(w^k_h),\chi_2\right)^h  = \lscal{n^k_h}{\chi_2}. \label{eq:discrete-vp}
\end{equation}
where $\tilde B(\cdot)$ is the discrete upwind approximation of the continuous mobility $b(\cdot)$ for the convective term while $\hat B(\cdot)$ and $\widehat{B\psi_+''}(\cdot)$ are the constant approximations of the mobility and second derivative of the convex part of the potential multiplied by the mobility. These latter quantities are defined in Equations~\eqref{eq:mob-up-1D},~\eqref{eq:diff-b}, and~\eqref{eq:diff-psi} respectively.
The initial condition for the discrete problem is chosen such that for $n^0(x)\in H^1(\Omega)$, we have
\begin{equation}
    \begin{cases}
        n_h^0 &= \pi^h \left(n^0(x)\right),\quad \text{for}\quad d = 1,\\
        n^0_h &= \hat P_h^0 \left(n^0(x)\right),\quad\text{for}\quad d=2, \, 3, 
    \end{cases}
    \label{eq:init-discr1}
\end{equation}
and $w^0_h$ is the unique solution of
\begin{equation}
    \sg\scal{ \nabla w^0_h}{\nabla \chi} + \lscal{w_h^0 + \f\gamma\sg \psi_-^\prime(w^0_h) }{\bas} =  \lscal{ n^0_h}{ \bas} ,\quad \forall \chi \in \fespace.
    \label{eq:init-discr2}
\end{equation}
\commentout{
Similarly, we define the regularized problem using the smooth potential $\psi_\es$. 

\textbf{Problem $P_\es$: }for each $k = 0, \dots, N_T - 1$, find $\{n_\heps^{k+1},w_\heps^{k+1}\}$ in $K^h \times \fespace$ such that $\forall \chi \in \fespace$
\begin{subequations}
    \begin{numcases}{}
        \left(\frac{n_\heps^{k+1}-n_\heps^k}{\Delta t},\chi\right)^h  + \left(\tilde B(n_\heps^{k}) \nabla \left( \vp_\heps^{k+1} + \proj (\psi_{+,\es}^\prime(n_\heps^{k+1}))\right), \nabla \chi\right) = 0  , \label{eq:discrete-n-reg} \\
        \sigma\left(\nabla \vp_\heps^{k+1},\nabla \chi\right) + \left(\vp_\heps^{k+1},\chi\right)^h  = \gamma \left(\nabla n_\heps^{k+1/2},\nabla \chi\right) + \left(\ov \psi_-^\prime(n_\heps^{k}-\frac{\sigma}{\gamma}\vp_\heps^{k}),\chi\right)^h ,\label{eq:discrete-vp1-reg}
    \end{numcases}
\end{subequations}
}
To preserve the nonnegativity of the discrete solutions, we compute the mobility coefficient employing an implicit upwind method adapted to the finite element setting. The explanation on how to adapt the upwind method requires us to define the matrix formulation of the problem ~\eqref{eq:discrete-n}--\eqref{eq:discrete-vp}.  

\paragraph{Matrix form.} 
\label{sec:upwind}

For $k = 0, \dots, N_T -1$, let $\underline{n}^k$ and $\underline{w}^k$ be the vectors
\[
	\underline{n}^k := [ n_{1}^k, \dots, n^k_{N_h}  ]^{T}, \quad \underline{w}^k := [ w_{1}^k, \dots, w^k_{N_h}  ]^{T}.
\]
We can then rewrite System~\eqref{eq:discrete-n}-~\eqref{eq:discrete-vp} in its matrix form
\begin{align}
M_l \underline{n}^{k+1} + \Delta t R^\kk\underline{n}^{k+1} &= M_l \underline{n}^{k} + \Delta t U^\kk \underline{w}^{k+1}, \label{eq:discrete-n-matrix} \\
\left( \sigma Q + M_l \right) \underline{w}^{k+1} + \f\gamma\sg M_l \underline{\psi}_{-}^{\prime}  &= M_l \underline{n}^{k} , \label{eq:discrete-vp-matrix} 
\end{align}
where $\underline{\psi}_{-}^{\prime}$ is the vector containing the values at the nodal values
\begin{align*}
\left(\underline{\psi}_{-}^{\prime}\right)_i &=  \psi_-^\prime(w_i^k)\quad i=1,\dots,N_h.
\end{align*}

In the above matrix form, we have used the definitions of the finite element matrices associated with $U^\kk = U(n^\kk_h) = \scal{\tilde B(n^\kk_h)\nabla \cdot}{\nabla \cdot}$, such that
\begin{equation}
U_{ij}^\kk = \f\gamma\sigma \int_{\Omega} \tilde B(n^{\kk}_{h}) \nabla \chi_i \nabla \chi_j \, \text{d}x  = \f\gamma\sigma \int_{\Omega} \tilde B^{\kk}_{ij} \nabla \chi_i \cdot \nabla \chi_j \, \text{d}x, \quad \text{ for } i,j = 1, \dots, N_h,
\label{eq:definition-matrix-U}
\end{equation} 
and for $R^\kk = R(n^\kk_h) = \scal{ \f\gamma\sigma \hat B(n^\kk_h)+ \widehat{\left(B \psi^{''}_+ \right)}(n^\kk_h) \nabla \cdot}{\nabla \cdot}$, we have
\begin{equation}
    \begin{aligned}
R_{ij}^\kk &= \int_{\Omega} \f\gamma\sigma \hat B(n^\kk_h)+ \widehat{\left(B \psi^{''}_+ \right)}(n^\kk_h)\nabla \chi_i \cdot \nabla \chi_j \, \text{d}x  \\
&= \int_{\Omega} \f\gamma\sigma\hat B^{\kk}_{ij}+\widehat{\left(B\psi^{''}_{+}\right)}_{ij}^\kk  \nabla \chi_i \cdot\nabla \chi_j \, \text{d}x, \quad \text{ for } i,j = 1, \dots, N_h.
    \end{aligned}
\label{eq:definition-matrix-R}
\end{equation} 

From \eqref{eq:Qii-Qij} and \eqref{eq:estimate-ratio-Q-ML}, we can interpret the first equation of System~\eqref{eq:discrete-n}--\eqref{eq:discrete-vp} as using the finite volume method \ie, for every $x_i\in J_h$, we have
\begin{equation}
    \begin{aligned}
    M_{l,ii}n_i^\kk + \dt \sum_{x_j \in \Lambda_i}  \left(\f\gamma\sigma \hat B_{ij}^\kk + \widehat{\left(B \psi^{''}_+ \right)}^\kk_{ij}\right)& \abs{Q_{ij}} (n_i^\kk-n_j^\kk)   \\
    &= M_{l,ii}n_i^k + \f{\gamma\dt}{\sigma}  \sum_{x_j \in \Lambda_i}  \tilde B_{ij}^\kk \abs{Q_{ij}} (w_i^\kk-w_j^\kk).
    \end{aligned}
    \label{eq:equation-FV-form}
\end{equation}

Therefore, defining $F_{ij}^\kk = H_{ij}^\kk - G_{ij}^\kk$ with 
\[
\begin{aligned}
    H_{ij}^\kk &= H(n_i^\kk, n_j^\kk, (\delta n^\kk)_{ij}) =  \left(\f\gamma\sigma \hat B_{ij}^\kk + \widehat{\left(B \psi^{''}_+ \right)}^\kk_{ij}\right) Q_{ij} (-(\delta n^\kk)_{ij}),\\
    G_{ij}^\kk &= G(n_i^\kk, n_j^\kk, (\delta w^\kk)_{ij} )= \f\gamma\sigma\tilde B_{ij}^\kk Q_{ij} (-(\delta w^\kk)_{ij}),
    \end{aligned}
\]
where $(\delta w)_{ij}$ denotes the difference $(w_i^\kk-w_j^\kk)$.
Thus, the scheme reads
\begin{subequations}
    \begin{numcases}{}
        F_{ij}^\kk + F_{ji}^\kk = 0,\\
          M_{l,ii}n_i^\kk   = M_{l,ii}n_i^k - \dt  \sum_{j \neq i} F_{ij}^\kk, \quad \forall x_i\in J_h, \label{eq:discrete-n-FV} \\
        \sigma\left(\nabla w_h^{k+1},\nabla \chi\right) + \left(w_h^{k+1} + \f\gamma\sg \psi_-^\prime(w^k_h),\chi\right)^h  = \lscal{n^k_h}{\chi}. \label{eq:discrete-vp-FV}
    \end{numcases}
\end{subequations}

Let us now describe in detail how the coefficients $\tilde B^{\kk}_{ij}$, $\hat B^\kk_{ij}$ and $ \widehat{\left(B \psi^{''}_+ \right)}^\kk_{ij}$ are computed.   
\paragraph*{Constant approximation of mobility and second derivative of potential.}
We define the upwind mobility coefficient associated with the convective term by
\begin{equation}
	\tilde B^{\kk}_{ij} := \begin{cases} b_1(n^{\kk}_{i})b_2(n^{\kk}_{j}),\,&\text{if } \, w_j^{k+1} - w^{k+1}_i > 0,\\
	 b_1(n^{\kk}_{j})b_2(n^{\kk}_{i}),\,&\text{otherwise},
	\end{cases} \quad i, j = 1, \dots, N_h.
	\label{eq:mob-up-1D}
\end{equation}

The mobility function and second derivative of the convex part of the potential associated to the diffusion term are defined by 
\begin{equation}
	\hat B^{\kk}_{ij} := \max_{[n_i^\kk,n_j^\kk]} b(s)\quad i, j = 1, \dots, N_h,
	\label{eq:diff-b}
\end{equation}
and 
\begin{equation}
	\widehat{\left(B \psi^{''}_+ \right)}^\kk_{ij} := \max_{[n_i^\kk,n_j^\kk]} b(s) \psi_+''(s)\quad i, j = 1, \dots, N_h.
	\label{eq:diff-psi}
\end{equation}

The approximation~\eqref{eq:mob-up-1D} is similar to the one used in~\cite{almeida_energy_2019} for the one-dimensional finite volume discretization of the Keller-Segel system. Furthermore, our adaptation of the upwind method in the finite element context is also close in spirit to the one proposed by Baba and Tabata in~\cite{baba_1981_upwindfe}, where the authors also used barycentric coordinates to define the basis functions.\newline
We remark that the coefficients $\tilde B^\kk_{ij}$, $\hat B_{ij}^{\kk}$ and $\widehat{\left(B \psi^{''}_+ \right)}^\kk_{ij}$ are constant and uniquely defined along each edge of the mesh (\ie in the diamond cells). 
Yet, this method is well suited for a standard assembling procedure and, as a result, is simpler to implement in already existing finite element software since it requires only the adaptation of the calculation of a non-constant matrix. This method can also be adapted for the simulation of other advection-diffusion equations to preserve the nonnegativity of solutions.

From this approximation of the mobility we have the following properties concerning the convective flux.
\begin{proposition}[Properties of the convective flux]\label{prop:convective-flux}
For any $(a,b,c)\in \R^3$, we have: 
\begin{enumerate}
    \item $G(\cdot, b, c)$ is non-decreasing, and $G(a, \cdot, c)$ is non-increasing;
    \item $G(a, b, c) = - G(b, a, -c)$;
    \item $G(a,a,c) = b(a)  (-c)$;
    \item there is a constant $C>0$ such that $\abs{G(a,b,c)}\le C(\abs{a}+\abs{b})\abs{c}$;
    \item there is a modulus of continuity $\omega:\R^+\to  \R^+$ such that for all $(a',b',c')\in \R^3$
    \[
        \abs{G(a,b,c) - G(a',b',c')} \le \abs{c}\omega\left(\abs{a-a'}+\abs{b-b'} \right).
    \]
\end{enumerate}
\end{proposition}

\commentout{
\paragraph{Solving algorithm.} To solve the non-linear system~\eqref{eq:discrete-n}--\eqref{eq:discrete-vp}, we use Picard's iteration scheme. Choosing an adequate linearization, we decouple the two equations and add, within the iterative procedure, an adaptive condition such that
\[
    0\le n_h^{k+1} < 1, \quad\text{a.e. in } \Omega,
\] 
is satisfied. The solving algorithm for each time step reads
\begin{enumerate}
    \item Set $n^\star_h = n^k_h$ and $\vp^\star_h = \vp_h^k$.
    \item Solve 
    \[
        \sg \scal{\nabla \vp^{\star \star}_h}{\nabla \chi} + \lscal{\vp^{\star \star}_h}{\chi} = \gamma \scal{\nabla \f{n^\star_h+n_h^k}{2}}{\nabla \chi} + \lscal{\ov \psi_-^\prime\left(n^k_h-\f{\sg}{\gamma}\vp^k_h \right)}{\chi}.
    \]
    \item Compute the projection $\proj(\psi_+^\prime(n^\star_h))$ and set $\xi_h^\star = \vp^{\star\star}_h + \proj(\psi_+^\prime(n^\star_h))$.
    \item Compute the mobility matrix $U$ that depends on the direction of $\nabla \xi_h^{\star}$.
    \item Adapt the time step such that the following condition is satisfied 
    \[
        \f{\dt (d+1) G_h}{\kappa_h^2} \max_{\substack{x_i \in J\\ x_j \in \Lambda_i}}\left( \xi^{\star}_j-\xi_i^\star\right) < 1.
    \]
    \item Solve 
    \[
        \lscal{n^{\star \star}}{\chi}  = \lscal{n^k_h}{\chi} - \dt \scal{\tilde B(u^k_h) \nabla \xi_h^\star}{\nabla \chi}. 
    \]
    \item If $\norm{n^{\star\star}_h-n^\star_h}_{L^2(\Omega)} + \norm{\vp^{\star\star}_h-\vp^\star_h}_{L^2(\Omega)} < \es$ with $\es$ a small parameter, then set $u^{k+1}_h = n_h^{\star\star}$ and $\vp^{k+1}_h = \vp_h^{\star\star}$. If the stopping criteria is not met, then set $n^\star_h = n^{\star\star}_h,\quad \vp^\star_h = \vp_h^{\star \star}$ and go to the second step of the algorithm.
\end{enumerate}
}

\subsection{Discrete maximum principle, non-negativity, and mass conservation}
\label{sec:positivity}

\begin{proposition}[Non-negativity of $n_h^{k+1}$ and upper bound]
    The numerical scheme~\eqref{eq:discrete-n}--\eqref{eq:discrete-vp} preserves the non-negativity of the solution, i.e. for all $k=0,\dots,N_T-1$, we have $0\le n_h^{k+1} \le 1$.
    \label{prop:non-neg-n}
\end{proposition}
\begin{proof}
    \textit{Step 1: Non-negativity.} We prove the non-negativity by induction. We assume that $\forall x_i \in J_h$, $ 0\le n_i^k  \le 1$, and $n_i^\kk = \min_{x_j\in J_h}(n_j^\kk)$. Then, we multiply Equation~\eqref{eq:discrete-n-FV} by $-(n_i^\kk)^- = -\max(0, -n_i^\kk )$ to obtain
    \[
    \begin{aligned}
    -\sum_{j=1}^{N_h}\scal{\hat \bas_j}{\hat \bas_i}\left(\f{n^\kk_i-n_i^k}{\dt}\right)(n^\kk_i)^- = &\sum_{j\neq i}  \left(\f\gamma\sigma \hat B_{ij}^\kk + \widehat{\left(B \psi^{''}_+ \right)}^\kk_{ij}\right)Q_{ij}(n_j^\kk-n_i^\kk)(n^\kk_i)^-     \\
    &- \f\gamma\sigma\sum_{j\neq i} \tilde B_{ij}^\kk Q_{ij} (w^\kk_j-w^\kk_i)(n_i^\kk)^-.
    \end{aligned}
    \]
    From the non-negativity of $\hat B_{ij}^\kk$, and $\widehat{\left(B \psi^{''}_+ \right)}^\kk_{ij}$ as well as \eqref{eq:estimate-ratio-Q-ML}, we know that 
    \[
        \sum_{j\neq i}  \left(\f\gamma\sigma \hat B_{ij}^\kk + \widehat{\left(B \psi^{''}_+ \right)}^\kk_{ij}\right)Q_{ij}(n_j^\kk-n_i^\kk)(n^\kk_i)^- \le 0.
    \]
    Then, if $w_j^\kk - w_i^\kk > 0$, $\tilde B_{ij}^\kk = b_1(n_i^\kk)b_2(n_j^\kk)$, and from the extension of $b_1(s)$ by zero for $s<0$, we have 
    \[
    - \f\gamma\sigma\sum_{j\neq i} \tilde B_{ij}^\kk Q_{ij} (w^\kk_j-w^\kk_i)(n_i^\kk)^- \le 0.
    \]
    Altogether, we obtain
    \[
        -\sum_{j=1}^{N_h}\scal{\hat \bas_j}{\hat \bas_i}\left(\f{n^\kk_i-n_i^k}{\dt}\right)(n^\kk_i)^- \le 0,
    \]
    and, hence, $(n^\kk_i)^- = 0$ which implies $n^\kk_i \ge 0$ and $n_h^\kk \ge 0$.
    
    \textit{Step 2: Upper bound.} To prove the upper bound we repeat the previous argument but this time we assume $\forall x_i \in J_h$, $ 0\le n_i^k  \le 1$, and $n_i^\kk = \max_{x_j\in J_h}(n_j^\kk)$. We multiply Equation~\eqref{eq:discrete-n-FV} by $(n_i^\kk-1)^+ = \max(0, n_i^\kk-1 )$ to obtain  
    \[
    \begin{aligned}
     \sum_{j=1}^{N_h}\scal{\hat \bas_j}{\hat \bas_i}\left(\f{n^\kk_i-n_i^k}{\dt}\right)(n_i^\kk-1)^+ = &-\sum_{j\neq i}  \left(\f\gamma\sigma \hat B_{ij}^\kk + \widehat{\left(B \psi^{''}_+ \right)}^\kk_{ij}\right)Q_{ij}(n_j^\kk-n_i^\kk)(n_i^\kk-1)^+    \\
    &+ \f\gamma\sigma\sum_{j\neq i} \tilde B_{ij}^\kk Q_{ij} (w^\kk_j-w^\kk_i)(n_i^\kk-1)^+.
    \end{aligned}
    \]
    Since $n_i^\kk = \max_{x_j\in J_h}(n_j^\kk)$ and from the non-negativity of $\hat B_{ij}^\kk$ and $ \widehat{\left(B \psi^{''}_+ \right)}^\kk_{ij}$ as well as \eqref{eq:estimate-ratio-Q-ML}, we have
    \[
    -\sum_{j\neq i}  \left(\f\gamma\sigma \hat B_{ij}^\kk + \widehat{\left(B \psi^{''}_+ \right)}^\kk_{ij}\right)Q_{ij}(n_j^\kk-n_i^\kk)(n_i^\kk-1)^+  \le 0.
    \]
    Then, if $w_j^\kk - w_i^\kk \le 0$, we have $\tilde B_{ij}^\kk = b_1(n_j^\kk)b_2(n_i^\kk)$, and from the extension of $b_2(s)$ by zero for $s>1$, we obtain
    \[
        \f\gamma\sigma\sum_{j\neq i} \tilde B_{ij}^\kk Q_{ij} (w^\kk_j-w^\kk_i)(n_i^\kk-1)^+ \le 0.
    \]
    Altogether, we have 
    \[
        \sum_{j=1}^{N_h}\scal{\hat \bas_j}{\hat \bas_i}\left(\f{n^\kk_i-n_i^k}{\dt}\right)(n_i^\kk-1)^+ \le 0,
    \]
    which implies $(n_i^\kk-1)^+=0$, and, hence, $n_i^\kk\le 1$. Therefore, we obtain the result. 
\end{proof}

The previous result allows to show that $w_h^\kk$ is confined in a threshold $[-C,C]$ with $C>0$ finite. 
\begin{proposition}
    The numerical scheme~\eqref{eq:discrete-n}--\eqref{eq:discrete-vp} preserves a lower and upper bound for $w^\kk_h$, i.e. for all $k=0,\dots,N_T-1$, we have $-C\le w_h^{k+1} \le C$, for $C>0$ finite.
    \label{prop:confin-w}
\end{proposition}
\begin{proof}
    The result is found using Equation~\eqref{eq:discrete-vp}, Proposition~\ref{prop:non-neg-n}, and the boundedness of ${\psi}_-^\prime(s)$ for all $s\in \R$ (given by~\eqref{eq:psi-minus}).
\end{proof}

\begin{proposition}[Conservation of mass]
    The finite element numerical scheme~\eqref{eq:discrete-n}--\eqref{eq:discrete-vp} preserves the initial mass, i.e. for all $k=0,\dots,N_T-1$, we have
    \[
        \int_\Omega n^0_h\,\dd x = \int_\Omega n^\kk_h\,\dd x.   
    \]
\end{proposition}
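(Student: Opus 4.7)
The plan is to test the first discrete equation \eqref{eq:discrete-n} against the constant function $\chi \equiv 1$, which is an admissible test function because constants lie in $V^h$ (they are trivially piecewise linear). Since $\nabla 1 = 0$, the entire bilinear term involving the upwind mobility matrix $\tilde B(n_h^k)$ drops out independently of how $B^k_{ij}$ is defined, so the upwind modification plays no role here. What remains is
\begin{equation*}
\left(\frac{n_h^{k+1}-n_h^k}{\Delta t},1\right)^h = 0.
\end{equation*}

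The next step is to identify the lumped pairing against $1$ with the ordinary integral. The difference $n_h^{k+1}-n_h^k$ belongs to $V^h$, so it is piecewise linear on each simplex $K$. Using the identity recalled in Section~\ref{sec:notation},
\begin{equation*}
(f,1)^h = \frac{1}{d+1}\sum_{K\in\mathcal{T}^h}|K|\sum_{x_i\in K}f(x_i),
\end{equation*}
and the classical fact that the integral of an affine function over a simplex equals $|K|/(d+1)$ times the sum of its vertex values, I would conclude that $(f,1)^h=\int_\Omega f\,\dd x$ for every $f\in V^h$. Applied to $f = n_h^{k+1}-n_h^k$, this yields
\begin{equation*}
\int_\Omega n_h^{k+1}\,\dd x = \int_\Omega n_h^k\,\dd x.
\end{equation*}

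A straightforward induction on $k$ starting from $k=0$ then delivers the claimed mass conservation $\int_\Omega n_h^{k+1}\,\dd x = \int_\Omega n_h^0\,\dd x$ for every $k=0,\dots,N_T-1$. There is no real obstacle in this argument; the two points worth emphasising are (i) that the upwind approximation of the mobility is compatible with mass conservation precisely because it only enters through a discrete divergence-form operator that is annihilated by constant test functions, and (ii) that the lumped inner product is consistent with the exact integral when tested against $1$ on piecewise linear functions, so no quadrature error is introduced.
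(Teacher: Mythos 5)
Your overall strategy --- test \eqref{eq:discrete-n} with $\chi\equiv 1$, identify $(f,1)^h$ with $\int_\Omega f\,\dd x$ for $f\in V^h$, and induct on $k$ --- is in substance the same as the paper's, which sums the nodal equations over all nodes (summing over $i$ is exactly testing with the constant vector $\underline{1}$). The identification of the lumped pairing against $1$ with the exact integral, and the induction, are fine.

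The gap is the claim that the mobility term ``drops out independently of how $B^k_{ij}$ is defined'' because $\nabla 1=0$. That argument would be valid if the term were a genuine $L^2$ inner product $\int_\Omega m(x)\,\nabla\xi\cdot\nabla\chi\,\dd x$ for a single scalar coefficient field $m$. But the upwind discretization replaces it by the matrix $U$ of \eqref{eq:definition-matrix-U}, whose entry $U_{ij}$ carries its own coefficient $B^k_{ij}$ determined by the sign of $\xi_i^{k+1}-\xi_j^{k+1}$ along the edge $(i,j)$; the bilinear form is $\underline{\chi}^{T}U\,\underline{\xi}$, not an integral against a fixed function. Testing with $\chi\equiv 1$ therefore yields $\sum_j\bigl(\sum_i U_{ij}\bigr)\xi_j$, which vanishes if and only if the columns of $U$ (equivalently, by the symmetry $B^k_{ij}=B^k_{ji}$, its rows) sum to zero. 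That is exactly the structural identity $\sum_{x_j\in\Lambda_i}\abs{U_{ij}}=U_{ii}$ of \eqref{eq:prop-A}, combined with the acuteness of the mesh making the off-diagonal $U_{ij}$ nonpositive, that the paper's proof invokes: it constrains how the diagonal entries $U_{ii}$ are assembled and is emphatically not independent of the definition of $B^k_{ij}$. If, for instance, $U_{ii}$ were computed naively from \eqref{eq:definition-matrix-U} with the coefficient $n_i^k(1-n_i^k)^2$ from the ``otherwise'' branch of \eqref{eq:mob-up-1D}, the row sums would not vanish in general and mass would not be conserved. Your proof is therefore missing the one substantive ingredient of the proposition: the verification that the upwind flux matrix annihilates constants.
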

\begin{proof}
    To prove mass conservation, we use the identity, for each $x_i\in J_h$
        \begin{equation}
        \sum_{\substack{ x_j \in \Lambda_i}} \abs{U_{ij}^\kk} = U_{ii}^\kk, \text{ and }  \sum_{\substack{ x_j \in \Lambda_i}} \abs{R_{ij}^\kk} = R_{ii}^\kk.
        \label{eq:prop-A} 
    \end{equation}
    Summing over the nodes in Equation~\eqref{eq:discrete-n}, we obtain
    \[
        \begin{aligned}
            \sum_{i=1}^{N_h}\sum_{j=1}^{N_h}\scal{\hat \bas_j}{\hat \bas_i}\left(n^{k+1}_h-n^k_h\right)(x_i) 
            = &-\dt \sum_{i=1}^{N_h}\sum_{j=1}^{N_h} U_{ij}^\kk (-w_h^{k+1})(x_j) \\
            &-\dt \sum_{i=1}^{N_h}\sum_{j=1}^{N_h} R_{ij}^\kk n_h^\kk(x_j) . 
        \end{aligned}
    \]
    Using the symmetry of the matrices $U^\kk$ and $R^\kk$, the property \eqref{eq:prop-A}, we obtain 
    \[
        \sum_{i=1}^{N_h}\sum_{j=1}^{N_h}\scal{\hat\bas_j}{\hat \bas_i}\left(n^{k+1}_h-n^k_h\right)(x_i) = 0,
    \]
    which implies mass conservation.
\end{proof}

\subsection{Energy stability and a priori estimates}
\label{sec:apriori}
The finite element numerical scheme~\eqref{eq:discrete-n}--\eqref{eq:discrete-vp} preserves the dissipation of the energy at the discrete level. 

\begin{proposition}[Energy estimate for $w_h^\kk$]
\label{prop:estimate-H1-w}
    System~\eqref{eq:discrete-n}--\eqref{eq:discrete-vp} admits the following apriori estimate
    \[
        \sum_{k=0}^{N_T-1}\dt \abs{w^\kk_h}^2_1 = \sum_{k=0}^{N_T-1}\dt \sum_{x_i\in J_h}\sum_{x_j\in \Lambda_i} \abs{Q_{ij}} (w_i^\kk - w_j^\kk)^2 \le C(T, w^0).
    \]
\end{proposition}
\begin{proof}
    We use $\chi = \dt w^\kk_h$ in Equation~\eqref{eq:discrete-vp}, sum from $k=0 \to N_T-1$, use the boundedness of $\norm{n^\kk_h}_{\infty}$ and $\norm{w^\kk_h}_{\infty}$ given by the propositions~\ref{prop:non-neg-n} and \ref{prop:confin-w}, as well as the boundedness of $\psi_-^\prime$ to obtain the result.
\end{proof}

\begin{proposition}[Energy estimate on $n_h^\kk$ and dissipation]
    \label{prop:energy}
    System~\eqref{eq:discrete-n}--\eqref{eq:discrete-vp} admits the apriori estimate
    \begin{equation}
		\begin{aligned}
		\f12\norm{n_h^\kk}^2_0 + \sum_{k=0}^{N_T-1} \dt \sum_{x_i\in J_h}\sum_{x_j\in\Lambda_i} \left( C_2 \hat B^\kk_{ij} + \widehat{\left(B \psi^{''}_+ \right)}^\kk_{ij} \right)\abs{Q_{ij}}(n_i^\kk-n_j^\kk)^2 \le C(T,w^0_h,n^0_h),
		\end{aligned}
		\label{eq:energy}
     \end{equation}
     where $C_2$ is a small positive finite constant.
\end{proposition}
\begin{proof}
    Multiplying Equation~\eqref{eq:equation-FV-form} by $n_i^\kk$ and summing over the nodes $x_i\in J_h$ leads to 
    \[
    \begin{aligned}
        \lscal{n_h^\kk-n_h^k}{n^\kk_h} + \dt\sum_{x_i\in J_h}\sum_{x_j\in\Lambda_i}&  \left(\f\gamma\sigma \hat B_{ij}^\kk + \widehat{\left(B \psi^{''}_+ \right)}^\kk_{ij}\right)\abs{Q_{ij}}(n_i^\kk-n_j^\kk)^2 \\
        &=  \f\gamma\sigma \dt\sum_{x_i\in J_h}\sum_{x_j\in\Lambda_i}\tilde B_{ij}^\kk \abs{Q_{ij}}(w_i^\kk-w_j^\kk)(n_i^\kk-n_j^\kk).
        \end{aligned}
    \]
    The term on the right hand side can be bounded using Young's Inequality, and the boundedness of the mobility to obtain, for any $\kappa>0$,   
    \[
    \begin{aligned}
        \dt\sum_{x_i\in J_h}\sum_{x_j\in\Lambda_i}& \f\gamma\sigma\tilde B_{ij}^\kk \abs{Q_{ij}}(w_i^\kk-w_j^\kk)(n_i^\kk-n_j^\kk) \\
        &\le \dt\sum_{x_i\in J_h}\sum_{x_j\in\Lambda_i} \f{\kappa \gamma^2}{2\sigma^2}  \abs{Q_{ij}}(w_i^\kk-w_j^\kk)^2 + \f1{2\kappa} \tilde B_{ij}^\kk \abs{Q_{ij}}(n_i^\kk-n_j^\kk)^2 .
    \end{aligned}
    \]
    Therefore, it leads to 
    \[
    \begin{aligned}
        \lscal{n_h^\kk-n_h^k}{n^\kk_h} + \dt\sum_{x_i\in J_h}\sum_{x_j\in\Lambda_i}& \left(\f\gamma\sigma\hat B^\kk_{ij}- \f1{2\kappa} \tilde B^\kk_{ij}+ \widehat{\left(B\psi^{''}_{+}\right)}^{\kk}_{ij}\right)\abs{Q_{ij}}(n_i^\kk-n_j^\kk)^2 \\
        &\le \dt\sum_{x_i\in J_h}\sum_{x_j\in\Lambda_i} \f{\kappa \gamma^2}{2\sigma^2}  \abs{Q_{ij}}(w_i^\kk-w_j^\kk)^2.
        \end{aligned}
    \]
    Therefore, from the boundedness of both $\hat B_{ij}^\kk$ and $\tilde B^\kk_{ij}$, as well as the fact that $\hat B_{ij}^\kk \ge\tilde B^\kk_{ij}$ by definition, and since $\kappa$ can be chosen arbitrarily ,we know that it exists a finite constant $C_2>0$ such that $0 < C_2 \hat B^\kk_{ij} \le \f\gamma\sigma\hat B^\kk_{ij}- \f1{2\kappa} \tilde B^\kk_{ij}$ (we recall that the term $\f\gamma \sigma$ does not induce any difficulty since $\sigma< \gamma$).
    Using the property $(a-b)a \ge \f12 (a^2-b^2)$, Proposition~\ref{prop:estimate-H1-w} as well as the property~\eqref{eq:norm-lumped-scalar-prod}, and summing for $k=0\to N_T-1$, we obtain the result.
\end{proof}

\begin{corollary}
    \label{cor:estimate}
    The following inequalities hold 
    \begin{equation}
        \sum_{k=0}^{N_T-1}\dt\abs{\eta^{k+1}_h}_1^2  \le C(T,n^0,w^0), \text{ and } \sum_{k=0}^{N_T-1}\dt\abs{\zeta^{k+1}_h}_1^2 \le C(T,n^0,w^0). 
        \label{eq:apriori-eta}
    \end{equation}
\end{corollary}
\begin{proof}
    From Proposition~\ref{prop:energy} and the non-negativity of $\widehat{\left(B \psi^{''}_+ \right)}^\kk_{ij}$, we know that 
    \[
         \sum_{k=0}^{N_T-1} \dt \sum_{x_i \in J_h} \sum_{x_j\in \Lambda_i} \hat B^\kk_{ij}\abs{Q_{ij}}(n_i^\kk-n_j^\kk)^2 \le C.
    \]
    Furthermore, from the definition~\eqref{eq:diff-b} and the definition of $\eta$ provided by~\eqref{eq:def-eta}, we have 
    \[
        \begin{aligned}
        \sum_{k=0}^{N_T-1} \dt\sum_{x_i \in J_h} \sum_{x_j\in \Lambda_i} \hat B^\kk_{ij}\abs{Q_{ij}}&(n_i^\kk-n_j^\kk)^2 \\
        &\ge \sum_{k=0}^{N_T-1} \dt \sum_{x_i \in J_h} \sum_{x_j\in \Lambda_i} \abs{Q_{ij}}(\eta_i^\kk-\eta_j^\kk)^2 = \sum_{k=0}^{N_T-1} \dt \abs{\eta_h^\kk}_1^2.
        \end{aligned}
    \]
    
    The same is obtained for $\sum_{k=0}^{N_T-1} \dt\abs{\zeta^{k+1}_h}^2_1$ using similar arguments. 
\end{proof}

\commentout{
\begin{remark}
    \label{rem:extend-reg-estimates}
    Even though the previous estimates and properties have been derived for Problem $P$, the same hold for the regularized problem $P_\es$. Therefore, we have a regularized version of
    \begin{itemize}
    \item the bounds
    \[
        0\le n_\heps^\kk \le 1,\text{ and } -C\le w_\heps^\kk \le C;
    \]
    \item the conservation of the initial mass;
        \item the energy estimates
    \[
        \norm{n_\heps^\kk}^2_0 + \sum_{k=0}^{N_T-1} \dt \sum_{e_{ij} \in \setedges} \hat B^\kk_{\varepsilon, ij}(C_2+\hat \psi^{'',\kk}_{+, \varepsilon,ij})\abs{Q_{ij}}(n_{\varepsilon, i}^\kk-n_{\varepsilon, j}^\kk)^2 \le C(T,w^0_h,n^0_h) \, ,
    \]
    and
    \[
        \norm{w^\kk_\heps}^2_1 \le C(T, w^0_h);
    \]
        \item the result of Corollary \ref{cor:estimate}
    \[
        \norm{n^{k+1}_\heps}_1^2 + \norm{\vp^{k+1}_\heps}_1^2 \le C.
    \]
    \end{itemize}
\end{remark}
}
\commentout{
\begin{proposition}[Sharp upper bound for $n_h^\kk$]
    Due to the singularity of the convex part of the potential $\psi_+$, the following bound holds
    \[
        n_h^\kk < 1.
    \]  
\end{proposition}
\begin{proof}
    \textit{Step 1. Additional apriori estimates from the regularized problem. } We first write the regularized version of Equation~\eqref{eq:equation-FV-form}
    \[
    \begin{aligned}
     \sum_{j=1}^{N_h}\scal{\hat \bas_j}{\hat \bas_i}\left(n^{k+1}_\heps-n^k_\heps\right)(x_j)  + &\dt \sum_{j \in \Lambda_i}  \hat B_{\varepsilon,ij}^\kk(1 + \hat \psi_{+,\varepsilon, ij}^{'',\kk}) \abs{Q_{ij}} (n_{\varepsilon,i}^\kk-n_{\varepsilon,j}^\kk)   \\
     &= \dt  \sum_{j \in \Lambda_i}  \tilde B_{\varepsilon,ij}^\kk \abs{Q_{ij}} (w_{\varepsilon,i}^\kk-w_{\varepsilon,j}^\kk).
     \end{aligned}
    \]
    Multiplying the previous equation by $-w^\kk_{\varepsilon,i} + \psi_{+,\varepsilon}'(n_{\varepsilon,i}^\kk)$, to obtain 
    \[
    \begin{aligned}
        &\lscal{n_\heps^\kk - n_\heps^k}{\psi_{+,\varepsilon}'(n_\heps^\kk)-w^\kk_{\varepsilon,i}}  \\
        &+\dt \sum_{j \in \Lambda_i}  \hat B_{\varepsilon,ij}^\kk(1 + \hat \psi_{+,\varepsilon, ij}^{'',\kk}) \abs{Q_{ij}} (n_{\varepsilon,i}^\kk-n_{\varepsilon,j}^\kk)\left(\left(\psi_{+,\varepsilon}'(n_{\varepsilon,i}^\kk) - \psi_{+,\varepsilon}'(n_{\varepsilon,j}^\kk)\right) - (w^\kk_{\varepsilon,i} - w^\kk_{\varepsilon,j})\right)  \\
        &= -\dt  \sum_{j \in \Lambda_i}  \tilde B_{\varepsilon,ij}^\kk \abs{Q_{ij}} (w_{\varepsilon,i}^\kk-w_{\varepsilon,j}^\kk)\left((w^\kk_{\varepsilon,i} - w^\kk_{\varepsilon,j}) - \left(\psi_{+,\varepsilon}'(n_{\varepsilon,i}^\kk) - \psi_{+,\varepsilon}'(n_{\varepsilon,j}^\kk)\right) \right).
        \end{aligned}
    \]
    From the convexity of $\psi_{\varepsilon,+}(\cdot)$, we know that 
    \[
    \begin{cases}
        \lscal{n_\heps^\kk - n_\heps^k}{\psi_{+,\varepsilon}'(n_\heps^\kk)} \ge \lscal{\psi_{+,\varepsilon}(n_\heps^\kk)}{1}-  \lscal{\psi_{+,\varepsilon}(n_\heps^k)}{1}, \\
        (n_{\varepsilon,i}^\kk-n_{\varepsilon,j}^\kk)\left(\psi_{+,\varepsilon}'(n_{\varepsilon,i}^\kk) - \psi_{+,\varepsilon}'(n_{\varepsilon,j}^\kk)\right) \ge 0.
    \end{cases}
    \]
    Therefore, from the two previous inequalities, and the non-negativity of both $\hat B_{\varepsilon,ij}^\kk$, and $\hat \psi_{+,\varepsilon, ij}^{'',\kk}$, we have 
    \[
        \lscal{\psi_{+,\varepsilon}(n_\heps^\kk)}{1}-  \lscal{\psi_{+,\varepsilon}(n_\heps^k)}{1} \le \dt  \sum_{j \in \Lambda_i}  \tilde B_{\varepsilon,ij}^\kk \abs{Q_{ij}} (w_{\varepsilon,i}^\kk-w_{\varepsilon,j}^\kk)\left(\psi_{+,\varepsilon}'(n_{\varepsilon,i}^\kk) - \psi_{+,\varepsilon}'(n_{\varepsilon,j}^\kk) \right).
    \]
    Then, we use Young's Inequality to obtain for the right-hand side
    \[
        \dt  \sum_{j \in \Lambda_i}  \tilde B_{\varepsilon,ij}^\kk \abs{Q_{ij}} (w_{\varepsilon,i}^\kk-w_{\varepsilon,j}^\kk)\left(\psi_{+,\varepsilon}'(n_{\varepsilon,i}^\kk) - \psi_{+,\varepsilon}'(n_{\varepsilon,j}^\kk) \right) \le \f12 \dt  \sum_{j \in \Lambda_i} \abs{Q_{ij}}(w_{\varepsilon,i}^\kk-w_{\varepsilon,j}^\kk)^2 + \f12\dt  \sum_{j \in \Lambda_i} \tilde (B_{\varepsilon,ij}^\kk)^2 \abs{Q_{ij}}\left(\psi_{+,\varepsilon}'(n_{\varepsilon,i}^\kk) - \psi_{+,\varepsilon}'(n_{\varepsilon,j}^\kk) \right)^2.
    \]
    The first term on the right-hand side is bounded from Proposition~\ref{prop:estimate-H1-w}. For the second one, we have 
    \[
        \f12\dt  \sum_{j \in \Lambda_i} \tilde (B_{\varepsilon,ij}^\kk)^2 \abs{Q_{ij}}\left(\psi_{+,\varepsilon}'(n_{\varepsilon,i}^\kk) - \psi_{+,\varepsilon}'(n_{\varepsilon,j}^\kk) \right)^2 \le C \f12\dt  \sum_{j \in \Lambda_i} \tilde (B_{\varepsilon,ij}^\kk)^2 \tilde \ \abs{Q_{ij}}\left(\psi_{+,\varepsilon}'(n_{\varepsilon,i}^\kk) - \psi_{+,\varepsilon}'(n_{\varepsilon,j}^\kk) \right)^2 
    \]
\end{proof}
}

\subsection{Existence of discrete solution}
\label{sec:existence}
We prove well-posedness of our problem.
\begin{theorem}[Well-posedness of the problem] \label{th:existence}
    \begin{sloppypar}
    Let $d\le 3$, and the spatio-temporal mesh satisfies the assumptions of Section~\ref{sec:notation}.
    Then, System~\eqref{eq:discrete-n}--\eqref{eq:discrete-vp} with an initial condition satisfying~\eqref{eq:init-discr1}--~\eqref{eq:init-discr2}, has a solution ${\{n^{k+1}_{h}, w_{h}^{k+1}\} \in K^h\times \fespace}$ with
    \begin{equation}
        0\le n^{k+1}_h \le 1,\quad \text{ in } \Omega.
        \label{eq:bound-sol}
    \end{equation}
    \end{sloppypar}
\end{theorem}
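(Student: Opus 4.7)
The plan is to proceed by induction on $k$. For the base case $k=0$, $n^0_h\in K^h$ is directly prescribed by~\eqref{eq:init-discr1}. Since $\ov\psi^\prime_-$ is affine by assumption~\eqref{eq:assumption-psi_minus}, Equation~\eqref{eq:init-discr2} becomes linear in $\vp^0_h$ and takes the form $a(\vp^0_h,\chi)=\ell(\chi)$ for all $\chi\in V^h$, where $a$ is symmetric, continuous, and coercive on $V^h$ (coercivity following from the standing assumption $\sigma\ll\gamma$, which makes the lumped mass contribution dominate the negative contribution coming from $\ov\psi^\prime_-$). The Lax--Milgram theorem then yields existence and uniqueness of $\vp^0_h$.

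For the inductive step, assume that $(n^k_\heps,\vp^k_\heps)\in K^h\times V^h$ with $0\le n^k_\heps\le 1$ is given; we construct $(n^{k+1}_\heps,\vp^{k+1}_\heps)$ by a fixed-point argument that mirrors the Picard iteration of Section~\ref{sec:scheme}. Define $\mathcal{T}\colon V^h\times V^h \to V^h\times V^h$ as follows: given $(\tilde n,\tilde\vp)\in V^h\times V^h$, first solve the linear elliptic problem
\[
\sigma\scal{\nabla\vp}{\nabla\chi} + \lscal{\vp}{\chi} = \gamma\scal{\nabla\f{\tilde n+n^k_\heps}{2}}{\nabla\chi} + \lscal{\ov\psi^\prime_-(n^k_\heps-\f{\sigma}{\gamma}\vp^k_\heps)}{\chi}, \qquad \forall\chi\in V^h,
\]
which admits a unique $\vp\in V^h$ by Lax--Milgram. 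Then assemble the upwind mobility matrix $U(\tilde\xi)$ as in~\eqref{eq:mob-up-1D} using $\tilde\xi=\tilde\vp+\proj(\psi^\prime_{+,\es}(\tilde n))$, and define $n\in V^h$ by the linear (and, thanks to the diagonal lumped mass matrix, explicit) equation
\[
\lscal{n}{\chi} = \lscal{n^k_\heps}{\chi} - \dt\scal{U(\tilde\xi)\nabla(\vp+\proj(\psi^\prime_{+,\es}(\tilde n)))}{\nabla\chi}, \qquad \forall\chi\in V^h.
\]
Set $\mathcal{T}(\tilde n,\tilde\vp):=(n,\vp)$. The map is continuous on the finite-dimensional space $V^h\times V^h$: the regularization~\eqref{eq:numextension-psip}--\eqref{eq:numcontinuity-psi-reg} makes $\psi^\prime_{+,\es}$ globally Lipschitz, and although~\eqref{eq:mob-up-1D} is only piecewise defined, the resulting edge flux $U(\tilde\xi)(\tilde\xi_j-\tilde\xi_i)$ is a Lipschitz function of the nodal values through the standard $\min/\max$ rewriting. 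By construction, fixed points of $\mathcal{T}$ coincide with solutions of Problem~$P_\es$ at step $k+1$.

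Existence of such a fixed point is then obtained by Brouwer's theorem combined with a topological-degree argument: introducing a continuous homotopy $\mathcal{T}_\lambda$, $\lambda\in[0,1]$, between a benchmark linear map with a unique fixed point at $\lambda=0$ and $\mathcal{T}_1=\mathcal{T}$, the a priori estimates recalled in Remark~\ref{rem:extend-reg-estimates} bound all fixed points of $\mathcal{T}_\lambda$ uniformly in $\lambda$, so Brouwer applies on a sufficiently large ball of $V^h\times V^h$. The pointwise bounds $0\le n^{k+1}_\heps\le 1$ a.e.\ in $\Omega$ then follow from the stability condition~\eqref{eq:cond-pos} combined with the sign analysis of the upwind fluxes, repeating verbatim the computation performed in the proof of the non-negativity-and-upper-bound proposition (noting that it is valid for both $\psi^\prime_+$ and $\psi^\prime_{+,\es}$).

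The main obstacle is handling simultaneously the two nonlinearities---the implicit argument $n^{k+1}_\heps$ inside $\psi^\prime_{+,\es}$ and the dependence of the upwind direction on the unknown $\xi^{k+1}_\heps$---while preserving both the continuity of the fixed-point map and the uniform-in-$\lambda$ a priori bounds needed to close the topological argument. The regularization of $\psi_+$ and the Lipschitz rewriting of the upwind flux via $\min/\max$ are the two ingredients that make the argument go through, and the fact that the estimates of Section~\ref{sec:apriori} are uniform in $\es$ is what allows the invariant ball to be chosen independently of the regularization parameter---a feature that will be reused in the next section to pass to the limit $\es\to 0$.
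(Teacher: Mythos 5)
Your overall strategy --- a finite-dimensional fixed-point argument via Brouwer, with continuity supplied by the regularization of $\psi_+$ and the pointwise bounds supplied by condition~\eqref{eq:cond-pos} --- is the same as the paper's. The paper, however, applies Brouwer directly on the compact convex set $S=\{\undel w : M_l\undel w\cdot(1,\dots,1)=0,\ -\alpha\le\undel w\le 1-\alpha\}$ (mass constraint plus box constraint, with $\alpha$ the initial mean), writes the fully implicit map $F$ in matrix form, and proves it is Lipschitz using the boundedness of $\tilde B$, the Lipschitz continuity of $\psi^\prime_{+,\es}$, and Varah's bound for $(M_l+\sg Q)^{-1}$; the box constraint in $S$ then delivers \eqref{eq:bound-sol} directly. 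Your route replaces this by a decoupled Picard-type map on a large ball plus a homotopy/degree argument, and there are two concrete gaps in that execution.

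First, the map $\mathcal{T}$ as you define it is not continuous. You assemble the upwind matrix $U(\tilde\xi)$ from $\tilde\xi=\tilde\vp+\proj(\psi^\prime_{+,\es}(\tilde n))$, but then apply it to $\nabla\bigl(\vp+\proj(\psi^\prime_{+,\es}(\tilde n))\bigr)$ with the \emph{newly computed} $\vp$. The $\min/\max$ rewriting $B_{ij}(\xi)(\xi_j-\xi_i)=n_i(1-n_j)^2\min(0,\xi_j-\xi_i)+n_j(1-n_i)^2\max(0,\xi_j-\xi_i)$ is Lipschitz only when the \emph{same} $\xi$ appears in the switching criterion and in the difference; with your mismatch, crossing the hypersurface $\tilde\xi_i=\tilde\xi_j$ makes the coefficient $B_{ij}$ jump while the factor $\xi_j-\xi_i$ built from the new $\vp$ need not vanish there, so the edge flux --- and hence $\mathcal{T}$ --- is discontinuous. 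This is repairable (use $\tilde\xi$ consistently in the $n$-equation, which still makes fixed points coincide with solutions of Problem~$P_\es$), but as written the continuity hypothesis of Brouwer fails. Second, you bound all fixed points of $\mathcal{T}_\lambda$ uniformly in $\lambda$ by appealing to Remark~\ref{rem:extend-reg-estimates}; those estimates are derived for solutions of the actual scheme, i.e.\ for $\lambda=1$, and you never specify the homotopy nor re-derive the energy estimate along it, so the degree argument does not close as stated. The paper avoids both issues by working on the compact convex set $S$, where no a priori bound on intermediate problems is needed --- only the invariance $F(S)\subseteq S$, which is what the mass-conservation and nonnegativity propositions (under \eqref{eq:cond-pos}) provide.
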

\begin{proof}
    The proof of the existence of a solution for the discrete problem relies on the use of Brouwer's fixed point theorem. 
    A necessary step before applying the theorem is to change the unknown of our problem, we define $m^{k+1}_h = n^{k+1}_h -\alpha$, where $\alpha = \f{1}{\abs{\Omega}} \int_\Omega n^0 \, \dd x$. Therefore, the discrete problem~\eqref{eq:discrete-n}--\eqref{eq:discrete-vp} is equivalent to 
    \[
         F(\undel m^{k+1})=\undel m^{\kk},  
    \]
    where $F:S \to S$ is an application defined on the space $S$ with
    \[
        S = \{\undel m \,|\,  M_l \undel m \cdot (1,\dots,1) = 0 \quad \text{and}\quad -\alpha \le {\undel m} \le 1-\alpha\}.
    \]
    The first constraint in the definition of the space $S$ reflects the conservation of the initial mass while the second comes from Inequality~\eqref{eq:bound-sol}. As a result, $S$ is a convex and compact subspace of $\R^{N_h}$.  
    Using the matrix formulation~\eqref{eq:discrete-n-matrix}--\eqref{eq:discrete-vp-matrix} for the problem $P$, the application $F$ is defined by 
    \[
        F(\undel m^{k+1}) = \dt  M_l^{-1}\left[ U^\kk \left(\sg Q + M_l \right)^{-1} \left(M_l \undel n^k -\f\gamma\sigma M_l \undel \psi_-^\prime \right) \right] -\dt R^\kk \undel m^\kk + \undel n^k - \alpha.
    \]
    In the previous definition of $F$, we precise that the matrix $U^\kk$ associated to the nonlinear convection term is given by 
    \[
        U_{ij}^\kk = \f\gamma\sigma \int_\Omega  \tilde B(m^\kk_h +\alpha) \nabla \chi_i \cdot \nabla \chi_j \,\dd x, 
    \]    
    and for the diffusion term 
    \[
        R_{ij}^\kk = \int_{\Omega} \left(\f\gamma\sigma\hat B(m^{\kk}_h+\alpha) +\widehat{\left(B\psi^{''}_{+}\right)}(m^\kk_h+\alpha)\right)  \nabla \chi_i \cdot \nabla \chi_j \, \text{d}x , 
    \]    
    To check if $F$ is continuous, we compute for $\undel m_1,\undel m_2 \in S$,
    $ \norm{F(\underline{m}_1) - F(\underline{m}_2)}$, 
    and obtain 
    \[
		\begin{aligned}
        \norm{F(\underline{m}_1) - F(\underline{m}_2)} \le \dt \norm{  M_l^{-1}\left[ \left(U(m_{h,1}+\alpha) - U(m_{h,2}+\alpha)\right) \left(\sg Q + M_l \right)^{-1} \left(M_l \undel n^k -\f\gamma\sigma M_l \undel \psi_-^\prime \right) \right]   }\\
        + \dt \norm{R(m_{h,1}+\alpha)\undel m_1 - R(m_{h,2}+\alpha)\undel m_2  }.	
		\end{aligned}
	\] 
For the first term on the right-hand side of the previous inequality, we have  
\[
       \norm{M_l^{-1}}\le C,\quad \norm{\left(\sg Q + M_l \right)^{-1}}\le C,\quad \norm{M_l \undel n^k -\f\gamma\sigma M_l \undel \psi_-^\prime}\le C,
\]
from the properties of the mesh, the upper bound presented by Varah~\cite{varah_lower_1975} for the inverse of M-matrices, and our assumptions on $n^k_h$ and $w^k_h$ as well as the function $ \psi_-(\cdot)$.  
Lastly, from the definitions of the matrix $U(\cdot)$ and of the convective flux $G$ (see Definition~\ref{prop:convective-flux}), we have 
\[
    \norm{U(m_{h,1}+\alpha) - U(m_{h,2}+\alpha)} \le C \norm{\underline{m}_{1} - \undel{m}_{2}} .
\]  
The same applies using the properties of the matrix $R(\cdot)$ to obtain
\[
    \norm{R(m_{h,1}+\alpha)\undel m_1 - R(m_{h,2}+\alpha)\undel m_2 } \le C \norm{\undel{m}_{1} - \undel{m}_{2}}.
\]
 Altogether, using the properties of the standard finite element matrices and the fact that the mobility $\tilde B(m^k_\heps + \alpha)$ is bounded, we obtain 
	\[
		\norm{F(\underline{m}_1) - F(\underline{w}_2)}	\le C(\Delta t, h)\norm{\underline{m}_1-\underline{m}_2},
	\]
	which proves that the mapping $F$ is Lipschitz continuous.
    Therefore, applying Brouwer's fixed point theorem, we know that it exists a solution $\undel m^{k+1} \in S$ such that $F(\undel m^\kk) = \undel m^\kk$. Therefore, it exists a $m^\kk_h \in \fespace$ that gives the existence of a pair $\{n^\kk_h,w^\kk_h\}\in \fespace\times \fespace$ solution of the Problem $P$.

\end{proof}

\subsection{Compactness estimates}
\label{sec:compact}
We now derive bounds for the time and space translates of the discrete solutions. This results follow the lines of the works~\cite{cances,cances:hal-01119210}. We first define the space and time discrete spaces $\fespace_\dt$ and $\hat V^h_\dt$. These sets are composed of piecewise constant functions in time with values in $\fespace$ and $\hat V^h$ respectively. To study the convergence as $h,\dt \to 0$, we use an index $m$ such that, as $m\to \infty$, $h_m,\dt_m\to 0$. Therefore, we study the convergence of sequences of functions in the spaces  $V^{h_m}_{\dt_m}$ and $\hat V^{h_m}_{\dt_m}$.

We define by $\eta_{h_m, \dt_m}$, $\zeta_{h_m, \dt_m}$ and $w_{h_m,\dt_m}$ the piecewise affine in space and piecewise constant in time approximation of the functions $\eta$, $\zeta$ and $w$. We also denote by $\hat \eta_{h_m, \dt_m}$, $\hat \zeta_{h_m, \dt_m}$ and $\hat w_{h_m,\dt_m}$, the piecewise constant in space and time corresponding approximations. For each node $x_i \in J_h$ and time $t=t^{k}$, we have the notation $\eta_{h_m, \dt_m}(x_i,t^k) = \hat \eta_{h_m, \dt_m}(x_i,t^k) = \eta^k_i$. The associated vector containing all the value for all nodes is denoted by $\undel \eta^k$. 

\paragraph{Time translate estimates}
We start by estimates on the time translates. 
We denote by $Q_{T-\tau} = \Omega \times (0,T-\tau)$, for all $\tau \in(0,T)$.
We have the following result 
\begin{lemma}[Time translate estimates]
    There are constants $C_{\eta,t}$, $C_{\zeta,t}$ and $C_{w,t}$ independent of $h$ and $\tau$ such that the following inequalities hold 
    \begin{align}
        \int_{Q_{t-\tau}} \abs{\hat\eta_{h_m, \dt_m}(x,t+\tau) - \hat\eta_{h_m, \dt_m}(x,t)}^2\,\dd x \,\dd t \le C_{\eta,t}(\tau+\dt), \label{eq:time-translate-nu}\\
        \int_{Q_{t-\tau}} \abs{\hat\zeta_{h_m, \dt_m}(x,t+\tau) - \hat\zeta_{h_m, \dt_m}(x,t)}^2\,\dd x \,\dd t \le C_{\zeta,t}(\tau+\dt), \label{eq:time-translate-zeta}\\
        \int_{Q_{t-\tau}} \abs{\hat w_{h_m, \dt_m}(x,t+\tau) - \hat w_{h_m, \dt_m}(x,t)}^2\,\dd x \,\dd t \le C_{w,t}(\tau+\dt). \label{eq:time-translate-w}
    \end{align}
    \label{lem:time-translate}
\end{lemma}
\begin{proof}
    The proof is similar to the proof of Lemma 4.3 in~\cite{cances}. We give here the details for the sake of clarity since the equation under study is different. 

    We start by defining the quantity
    \[
        A_m(t) = \int_\Omega \abs{\hat\eta_{h_m, \dt_m}(x,t+\tau) - \hat\eta_{h_m, \dt_m}(x,t)}^2\,\dd x, \quad \forall t\in (0,T-\tau).  
    \]
    Since the functions $\hat\eta_{h_m, \dt_m}$ are piecewise constant in time, we define $\upsilon(t)$, for $t\in (0,T]$, the unique integer such that $t^{\upsilon(t)}< t \le t^{\upsilon(t)+1}$. 
    Therefore, we write $\forall t\in (0,T-\tau)$,
    \[
        \begin{aligned}
        A_m(t) &= \left(\undel \eta^{\upsilon(t+\tau)+1} - \undel \eta^{\upsilon(t)+1} \right)^T M_l \left(\undel \eta^{\upsilon(t+\tau)+1} - \undel \eta^{\upsilon(t)+1} \right), \\
        &= \sum_{i=1}^{N_h} \left(\eta_i^{\upsilon(t+\tau)+1} -  \eta_i^{\upsilon(t)+1} \right)^2 M_{l,ii}.  
        \end{aligned}
    \]
    However, from definition~\eqref{eq:def-eta}, we know that it exists a constant $C$ such that
    \[
        \left(\eta_i^{\upsilon(t+\tau)+1} -  \eta_i^{\upsilon(t)+1} \right)^2 \le C \left(n_i^{\upsilon(t+\tau)+1} - n_i^{\upsilon(t)+1} \right)\left(\eta_i^{\upsilon(t+\tau)+1} -  \eta_i^{\upsilon(t)+1} \right).
    \] 
    Then, using the definition of the integer $\upsilon(\cdot)$, we find
    \[
        C \left(n_i^{\upsilon(t+\tau)+1} - n_i^{\upsilon(t)+1} \right)\left(\eta_i^{\upsilon(t+\tau)+1} -  \eta_i^{\upsilon(t)+1} \right) = C \sum_{k=\upsilon(t)+1}^{\upsilon(t+\tau)}\left(n_i^{k+1} - n_i^{k} \right)\left(\eta_i^{\upsilon(t+\tau)+1} -  \eta_i^{\upsilon(t)+1} \right).
    \]
    From the first equation of the scheme~\eqref{eq:discrete-n}, we obtain 
    \[
        \begin{aligned}
        A_m(t) \le - C \sum_{k=\upsilon(t)+1}^{\upsilon(t+\tau)} \dt_m \sum_{i=0}^{N_h} \sum_{x_j \in \Lambda_i}  \left(\f\gamma\sigma\hat B_{ij}^\kk+ \widehat{\left(B\psi_{+}^{''}\right)}^\kk_{ij} \right) \abs{Q_{ij}} (n_i^\kk-n_j^\kk)\\
        \times \left(\left(\eta_i^{\upsilon(t+\tau)+1} -\eta_j^{\upsilon(t+\tau)+1}\right)  -  \left(\eta_i^{\upsilon(t)+1} - \eta_j^{\upsilon(t)+1}\right)  \right)\\
        + C \sum_{k=\upsilon(t)+1}^{\upsilon(t+\tau)} \f{\gamma\dt_m}{\sigma}  \sum_{i=0}^{N_h} \sum_{x_j \in \Lambda_i}  \tilde B_{ij}^\kk \abs{Q_{ij}} (w_i^\kk-w_j^\kk)\\
        \times \left(\left(\eta_i^{\upsilon(t+\tau)+1} -\eta_j^{\upsilon(t+\tau)+1}\right)  -  \left(\eta_i^{\upsilon(t)+1} - \eta_j^{\upsilon(t)+1}\right)  \right).
        \end{aligned}
    \]
    Using Young's inequality, we have 
    \[
        A_m(t) \le  C\left(A_{1,m}(t) + A_{2,m}(t) + A_{3,m}(t) + A_{4,m}(t)\right),
    \] 
    where 
    \[
        \begin{aligned}
        A_{1,m}(t) &=  \f12\sum_{k=\upsilon(t)+1}^{\upsilon(t+\tau)} \dt_m \sum_{i=0}^{N_h} \sum_{x_j \in \Lambda_i}  \left(\f\gamma\sigma\hat B_{ij}^\kk+ \widehat{\left(B\psi_{+}^{''}\right)}^\kk_{ij} \right) \abs{Q_{ij}} (n_i^\kk-n_j^\kk)^2,\\
        A_{2,m}(t) &= \left(\f\gamma\sigma\norm{b}_\infty + \f12\norm{b\psi_+^{''}}_{\infty}\right) \sum_{k=\upsilon(t)+1}^{\upsilon(t+\tau)} \dt_m \sum_{i=0}^{N_h} \sum_{x_j \in \Lambda_i} \abs{Q_{ij}} \left(\eta_i^{\upsilon(t+\tau)+1} -\eta_j^{\upsilon(t+\tau)+1}\right)^2,\\
        A_{3,m}(t) &= \left(\f\gamma\sigma\norm{b}_\infty + \f12\norm{b\psi_+^{''}}_{\infty}\right) \sum_{k=\upsilon(t)+1}^{\upsilon(t+\tau)} \dt_m \sum_{i=0}^{N_h}  \sum_{x_j \in \Lambda_i} \abs{Q_{ij}} \left(\eta_i^{\upsilon(t)+1} - \eta_j^{\upsilon(t)+1}\right)^2,\\
        A_{4,m}(t) &= \f12\sum_{k=\upsilon(t)+1}^{\upsilon(t+\tau)} \dt_m \sum_{i=0}^{N_h}  \sum_{x_j \in \Lambda_i}\f\gamma\sigma \tilde B_{ij}^\kk \abs{Q_{ij}} (w_i^\kk-w_j^\kk)^2. 
        \end{aligned}
    \]
    To handle the first sum in each of these quantities, we introduce some additional notations. We define $\rho(k,t,\tau)$ the characteristic function such that 
    \[
        \rho(k,t,\tau) = \begin{cases}
            1,\text{ if } t < k\dt_m\le t+\tau ,\\
            0,\text{ otherwise}.
        \end{cases}  
    \]
    Hence, we have 
    \[
        \int_0^{T-\tau}  \rho(k,t,\tau)\,\dd t = \int_{t^k-\tau}^{t^k}\,\dd t = \tau,\quad \text{and}\quad  \sum_{k=\upsilon(t)+1}^{\upsilon(t+\tau)} \dt_m =\sum_{k: t<t^k\le t+\tau} t^{\kk}-t^k \le \tau + \dt_m.
     \]
    
    Furthermore, for any family of real non-negative values $(a^k)_{k\in\{0,\dots,N_T\}}$, we have 
    \[
        \int_0^{T-\tau} \sum_{k=\upsilon(t)+1}^{\upsilon(t+\tau)} \dt_m a^\kk\,\dd t = \int_0^{T-\tau} \sum_{k=0}^{N_T-1} \dt_m a^\kk \rho(k,t,\tau)\, \dd t = \tau \sum_{k=0}^{N_T-1} \dt_m a^\kk .
    \]

    From Proposition~\ref{prop:energy} and Proposition~\ref{prop:estimate-H1-w}, we find
    \[
        \int_0^{T-\tau} A_{1,m}(t)\,\dd t \le C(\tau + \dt_m),\text{ and }  \int_0^{T-\tau} A_{4,m}(t)\,\dd t \le C\tau \le C(\tau + \dt_m).
    \] 
    Then, using arguments similar to Proposition 9.3 in~\cite{Eymard-2003-petroleum}, we have  for any family of real non-negative values $(a^k)_{k\in\{0,\dots,N_T\}}$,
    \[
        \int_0^{T-\tau} \sum_{k=\upsilon(t)+1}^{\upsilon(t+\tau)} \dt_m a^{\upsilon(t+\tau)+1}\,\dd t \le \left(\tau + \dt_m\right)\sum_{k=0}^{N_T-1}(\dt_m a^\kk).
    \]
    Using the previous argument and Corollary~\ref{cor:estimate}, we obtain 
    \[
        A_{2,m} + A_{3,m} \le C(\tau + \dt).
    \]
    This achieves the proof of Inequality~\eqref{eq:time-translate-nu}. The proofs of Inequalities~\eqref{eq:time-translate-zeta} and~\eqref{eq:time-translate-w} are very similar and we do not repeat them. 
\end{proof}
\begin{remark}
    From the previous Lemma, we can easily give the time translate estimates over $\R^{d+1}$. Indeed, extending by zero the functions $\hat \eta_{h_m, \dt_m}$, $\hat \zeta_{h_m, \dt_m}$ and $\hat w_{h_m,\dt_m}$ outside of $\Omega \times (0,T)$, we obtain 
    \[
        \begin{cases}
            \int_{\R^{d+1}} \abs{\hat\eta_{h_m, \dt_m}(x,t+\tau) - \hat\eta_{h_m, \dt_m}(x,t)}^2\,\dd x \,\dd t \le C_{\eta,t}(\tau+\dt),\\
            \int_{\R^{d+1}} \abs{\hat\zeta_{h_m, \dt_m}(x,t+\tau) - \hat\zeta_{h_m, \dt_m}(x,t)}^2\,\dd x \,\dd t \le C_{\zeta,t}(\tau+\dt),\\
            \int_{\R^{d+1}}  \abs{\hat w_{h_m, \dt_m}(x,t+\tau) - \hat w_{h_m, \dt_m}(x,t)}^2\,\dd x \,\dd t \le C_{w,t}(\tau+\dt). 
        \end{cases}  
    \]
    To obtain this result, we use 
    \[
        \begin{aligned}
        \int_{\R^{d+1}} \abs{\hat\eta_{h_m, \dt_m}(x,t+\tau) - \hat\eta_{h_m, \dt_m}(x,t)}^2\,\dd x \,\dd t =  \int_{Q_{t-\tau}} \abs{\hat\eta_{h_m, \dt_m}(x,t+\tau) - \hat\eta_{h_m, \dt_m}(x,t)}^2\,\dd x \,\dd t \\
        + \int_{T-\tau}^T \int_\Omega \abs{\hat\eta_{h_m, \dt_m}(x,t)}^2\dd x \,\dd t ,
        \end{aligned}
    \]
    with Lemma~\ref{lem:time-translate} and the $L^\infty$-bounds on $\hat \eta_{h_m, \dt_m}$, $\hat \zeta_{h_m, \dt_m}$, and $\hat w_{h_m,\dt_m}$.
\end{remark}
\paragraph{Space translate estimates}
We now turn to the space translate estimates.
\begin{lemma}[Space translate estimates] \label{lem:space-translate}
    There are three constants $C_{\eta,s}$, $C_{\zeta,s}$ and $C_{w,s}$ independent of $m$ and $y$ such that 
    \begin{align}
        \int_0^T\int_{R^d} \abs{\hat\eta_{h_m, \dt_m}(x+y,t) - \hat\eta_{h_m, \dt_m}(x,t)}\,\dd x \,\dd t \le C_{\eta,s}(\abs{y}+h_m), \label{eq:space-translate-1}\\
        \int_0^T\int_{R^d} \abs{\hat\zeta_{h_m, \dt_m}(x+y,t) - \hat\zeta_{h_m, \dt_m}(x,t)}\,\dd x \,\dd t \le C_{\zeta,s}(\abs{y}+h_m), \label{eq:space-translate-3}\\
        \int_0^T\int_{R^d} \abs{\hat w_{h_m, \dt_m}(x+y,t) - \hat w_{h_m, \dt_m}(x,t)}\,\dd x \,\dd t \le C_{w,s}(\abs{y}+h_m) \label{eq:space-translate-2}.
    \end{align}
\end{lemma}
\begin{proof}
    The proof of this result follows Lemmas 4.1 and 4.2 in~\cite{cances}.
    For the sake of clarity, we here present the main steps. 

    First, using Corollary~\ref{cor:estimate}, we know that $\norm{\nabla \eta_{h_m,\dt_m}}_{\left(L^2(\Omega\times(0,T))\right)^d}$ is bounded. Since the time and space domain considered is of finite measure, H\"older inequality indicates that $\norm{\nabla \eta_{h_m,\dt_m}}_{\left(L^1(\Omega\times(0,T))\right)^d}$ is also bounded. Hence, since $\eta_{h_m,\dt_m}$ is bounded as well, its extension by zeros outside $\Omega\times(0,T)$ lies in $L^\infty \bigcap BV(\R^{d+1})$.
    Therefore, we have 
    \[
        \int_0^T\int_{\R^d}\abs{\eta_{h_m,\dt_m}(t,x+y)- \eta_{h_m,\dt_m}(t,x)}\, \dd x\, \dd t \le C\abs{y},   
    \]
    from which, using the triangular inequality and Remark~\ref{rem:hat-et-pas-hat} stated below, we find Inequality~\eqref{eq:space-translate-1}. The same is found for Inequality~\eqref{eq:space-translate-3} and Inequality~\eqref{eq:space-translate-2} following the same arguments.
\end{proof}

\begin{remark}\label{rem:hat-et-pas-hat}
    One of the important tool that we did not present in the previous proof is 
    \begin{equation}
        \int_{\Omega_T}\abs{\eta_{h_m,\dt_m}(t,x+y)-\hat \eta_{h_m,\dt_m}(t,x+y)}\,\dd x\,\dd t\le C h_m,
        \label{eq:hat-et-pas-hat}
    \end{equation}
    found from the use of Lemma A.2 in~\cite{cances}.
\end{remark}

\subsection{Convergence analysis}
\label{sec:convergence}
This section is organized as follow, we first apply Fr\'echet-Kolmogorov theorem to show strong convergence in $L^1(\Omega_T)$, then we show that the limit is a solution of the continuous RDCH system. 

\begin{lemma}[Strong convergence in $L^1$]
    \label{lem:conv}
    \begin{sloppypar}
    As $m\to \infty$, we can extract subsequences of $(\hat \eta_{h_m,\dt_m})_{m\ge 1}$, $(\hat \zeta_{h_m,\dt_m})_{m\ge 1}$ and $(\hat w_{h_m,\dt_m})_{m\ge 1}$  converging in $L^{1}(\Omega_T)$ to the limit functions $\eta(n)\in L^2(0,T;H^1(\Omega))$, $\zeta(n)\in L^2(0,T;H^1(\Omega))$ and $w \in L^2(0,T;H^1(\Omega))$ such that 
    \end{sloppypar}
    \begin{equation}
        \hat\eta_{h_m,\dt_m}, \hat \zeta_{h_m,\dt_m}, \hat w_{h_m,\dt_m} \to \eta(n), \zeta(n), w,\quad \text{strongly in}\quad L^1(\Omega_T),
        \label{eq:strong-conv}
    \end{equation}
    \begin{equation}
        \eta_{h_m,\dt_m},  \zeta_{h_m,\dt_m}, w_{h_m,\dt_m} \rightharpoonup \eta(n), \zeta(n), w,\quad \text{weakly in}\quad L^2(0,T;H^1(\Omega)).
        \label{eq:weak-conv}
    \end{equation}
    Furthermore, we have 
    \begin{equation}
        \hat n_{h_m,\dt_m} \to n,\quad \text{a.e. in}\quad \Omega_T,\text{ and strongly in } L^p(\Omega_T),\quad \text{for} \quad p<+\infty.
        \label{eq:convergence-n}
    \end{equation}
\end{lemma}
\begin{proof}
    Again, the proof is similar to the proof of Lemma 4.5 in~\cite{cances}. Indeed, from Lemmas~\ref{lem:time-translate} and~\ref{lem:space-translate}, and the boundedness provided by Corollary~\ref{cor:estimate}, we know that the sequence $(\hat \eta_{h_m,\dt_m})_{m\ge0}$ satisfies the necessary assumptions to use Fr\'echet-Kolmogorov theorem. 
    Therefore, we know that  $(\hat \eta_{h_m,\dt_m})_{m\ge0}$ is relatively compact in $L^1(\Omega_T)$, which implies~\eqref{eq:strong-conv} (the result for the sequences $(\hat \zeta_{h_m,\dt_m})_{m\ge0}$ and $(\hat v_{h_m,\dt_m})_{m\ge0}$ follows similar arguments). Furthermore, the limit is in $L^2(0,T;H^1(\Omega))$ since from the use of Corollary~\ref{cor:estimate}, we know that $( \eta_{h_m,\dt_m})_{m\ge0}$ also converges weakly in $L^2(0,T;H^1(\Omega))$ to a limit. 
    Since we know from Inequality~\eqref{eq:hat-et-pas-hat}, that the sequences $( \eta_{h_m,\dt_m})_{m\ge0}$ and $(\hat \eta_{h_m,\dt_m})_{m\ge0}$ have the same limit and $(\hat \eta_{h_m,\dt_m})_{m\ge0}$ already converges to a limit in $L^1(\Omega_T)$, uniqueness of the limit gives the result. The same applies for the quantities $\hat \zeta_{h_m,\dt_m}, \hat w_{h_m,\dt_m}$ and $ \zeta(n)_{h_m,\dt_m}, w_{h_m,\dt_m}$ such that we obtain the convergences~\eqref{eq:strong-conv}--\eqref{eq:weak-conv}.
    
    To prove the convergence~\eqref{eq:convergence-n}, we start by stating that the inverse $\eta^{-1}$ of the continuous function $\eta$ exists and is continuous. Furthermore, we have that $\hat n_{h_m,\dt_m}$ is bounded in $L^\infty$ from Proposition~\ref{prop:non-neg-n}. Therefore, applying the dominated convergence theorem to $\hat n_{h_m,\dt_m} = \eta^{-1}\left(\hat \eta_{h_m,\dt_m} \right)$, we arrive to the convergence~\eqref{eq:convergence-n} in which the limit is defined as the unique function $n(t,x) = \eta^{-1}(\eta)$ ($\eta$ being the limit function in~\eqref{eq:strong-conv}). 
\end{proof}

\begin{theorem}[Limit system] \label{th:conv}
    The limit of $(\hat n_{h_m,\dt_m}, \hat w_{h_m,\dt_m}, \hat \eta_{h_m,\dt_m}, \hat \zeta_{h_m,\dt_m})$ denoted $(n,w,\eta,\zeta)$ is solution of the RDCH system in the sense of Definition~\ref{def:solution-RDCH}.
\end{theorem}
\begin{proof}
    The proof of this theorem is very close to Section 5 in~\cite{cances:hal-01119210}. For the sake of clarity, our proof can be found in Appendix~\ref{app:proof}.
\end{proof}

\section{Linearized semi-implicit numerical scheme}
\label{sec:explicit}
To restrain the computational time of the simulation of the RDCH model within reasonable bounds, we propose a linearized semi-implicit version of the numerical scheme. \AP{We apply the CVFE framework to the original formulation of the RDCH problem this time.} The problem now reads:

For each ${k = 0, \dots, N_T -1}$, find~$\{n_h^{k+1},\varphi_h^{k+1}\}$ in $K^h \times V^h$ such that
\begin{subequations}
\begin{align}
        \left(\frac{n_h^{k+1}-n_h^k}{\Delta t},\chi_1\right)^h  + \left(b(n_h^{k})\psi^\seconde_+(n_h^k) \nabla n^\kk_h, \nabla \chi_1\right) =  -\left(\tilde B(n_h^{k}) \nabla  \vp_h^{k+1},\nabla \chi_1 \right) \, , \quad \forall \chi_1 \in V^h, \label{eq:numdiscrete-n-expl} \\
        \sigma\left(\nabla \vp_h^{k+1},\nabla \chi_2\right) + \left(\vp_h^{k+1},\chi_2\right)^h  = \gamma \left(\nabla n_h^{k},\nabla \chi_2\right) + \left( \psi_-^\prime(n_h^{k}-\frac{\sigma}{\gamma}\vp_h^{k}),\chi_2\right)^h \, , \quad \forall \chi_2 \in V^h.
        \label{eq:numdiscrete-vp-expl}
\end{align}
\end{subequations}
We define the following finite elements matrices
\begin{equation}
    U_{ij}^k = \int_{\Omega} \tilde B_{ij}^{k} \nabla \chi_i \cdot \nabla \chi_j \, \text{d}x, \quad \text{ for } i,j = 1, \dots, N_h,
    \label{eq:numdefinition-matrix-U-expl}
\end{equation}
and 
\begin{equation}
    L_{ij}^k = \int_{\Omega} b(n_{h,\epsilon}^{k}) \psi^\seconde_+(n^k_h)\nabla \chi_i \cdot \nabla \chi_j \, \text{d}x, \quad \text{ for } i,j = 1, \dots, N_h.
    \label{eq:numdefinition-matrix-D-expl}
\end{equation}
We write the matrix form of Equation~\eqref{eq:numdiscrete-n-expl}
\[
      \left(M_l + \Delta t L^k\right)\undel n^\kk = -\Delta t U^k \undel \vp^\kk + M_l \undel n^k,
\]
and since $U^k$ has zero row sum, we can rewrite the previous equation for each node $x_i\in J_h$,
\begin{equation}
	M_{l,ii} n^{k+1}_i =  M_{l,ii} n_i^k - \Delta t \sum_{x_j \in \Lambda_i } \left[ L_{ij}^k (n^{k+1}_j-n^{k+1}_i) + U_{ij}^k (\vp^{k+1}_j-\vp^{k+1}_i)\right],
	\label{eq:numform-fv}
\end{equation}
where $\Lambda_i$ is the set of nodes connected to the node $x_i$ by an edge.
In the definition of \eqref{eq:numdefinition-matrix-U-expl} we compute the mobility coefficient in function of the direction of $\nabla \vp^\kk_h$. As for the nonlinear case, the mobility coefficient is given by 
\begin{equation*}
	\tilde B_{ij}^{k}=
\begin{cases}
    n_i^k(1-n_j^k)^2 , \quad &\text{if} \quad \vp^{k+1}_i - \vp^{k+1}_j > 0,\\
    n^{k}_j(1-n^{k}_{i})^2, \quad &\text{otherwise}.
\end{cases}
\end{equation*}
Even though we cannot redo the same analysis as for the nonlinear scheme, we can establish the existence and the nonnegativity of discrete solutions of ~\eqref{eq:numdiscrete-n-expl} and~\eqref{eq:numdiscrete-vp-expl}.

\begin{theorem}[Well-posedness of linear upwind scheme]
	Let $\Omega \subset \R^d$, $d=1,2,3$, and assume that $\mathcal{T}^h$ is a quasi-uniform acute mesh of $\Omega$, and the condition
	\begin{equation}
		\f{(d+1)\, G_h\, \dt}{\kappa_h^2 } \max_{\substack{x_i \in J_h\\ x_j \in \Lambda_i}}\left( \vp_j^\kk-\vp_i^\kk\right) < 1,
		\label{eq:stab-condition}
	\end{equation}
    \begin{sloppypar}
	(where $\Lambda_i$ is the set of nodes connected to the node $x_i$ by an edge) is satisfied.
	Then, the linear finite element scheme ~\eqref{eq:numdiscrete-n-expl}--\eqref{eq:numdiscrete-vp-expl}  with initial condition $n^0_h \in K^h$ admits a unique solution ${\{n^{k+1}_h, \vp^\kk_h\}\in K^h\times V^h}$ satisfying
    \end{sloppypar}
	\begin{equation*}
	0 \leq	n_h^{k+1} < 1.
	\end{equation*}
\end{theorem}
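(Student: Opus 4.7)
The plan is to exploit the fully decoupled structure of the linear scheme. Equation~\eqref{eq:numdiscrete-vp-expl} is a standalone linear system for $\vp_h^\kk$ driven only by the previous-step data $(n_h^k,\vp_h^k)$, whose matrix $\sg Q + M_l$ is symmetric positive definite; so $\vp_h^\kk\in V^h$ exists and is unique. Once $\vp_h^\kk$ is known, the upwind coefficients $B_{ij}^k$ in~\eqref{eq:numdefinition-matrix-U-expl} are fully determined (they depend only on $n_h^k$ and on the signs of $\vp_j^\kk-\vp_i^\kk$), so~\eqref{eq:numdiscrete-n-expl} reduces to the linear system
\[
  (M_l+\dt\, L)\,\undel n^\kk \;=\; M_l\,\undel n^k - \dt\, U\,\undel\vp^\kk.
\]
Because the mesh is acute and $b,\psi_+^{\prime\prime}\ge 0$, the matrix $L$ has non-positive off-diagonal entries and zero row sums; combined with the strictly positive diagonal $M_l$, the operator $M_l+\dt\, L$ is an $M$-matrix, hence invertible with entrywise non-negative inverse. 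This in turn reduces both bounds $n_h^\kk\ge 0$ and $n_h^\kk<1$ to checking the sign of the right-hand side.

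For non-negativity, using the zero-row-sum of $U$ the $i$-th entry of the right-hand side rewrites as $M_{l,ii}\,n_i^k - \dt\sum_{j\in\Lambda_i}U_{ij}(\vp_j^\kk-\vp_i^\kk)$. By construction of the upwind rule~\eqref{eq:mob-up-1D}, the only potentially negative contributions are those from edges where $\vp_j^\kk - \vp_i^\kk<0$; for these $B_{ij}^k = n_i^k(1-n_j^k)^2$, so the factor $n_i^k$ pulls out of the sum. Bounding $(1-n_j^k)^2\le 1$ and invoking Fujii's estimate~\cite{fujii_1973_fe} $|Q_{ij}|\le Q_{ii}\le (d+1)M_{l,ii}/\kappa_h^2$ together with $|\Lambda_i|\le G_h$ turns the sufficient condition into exactly the CFL-type inequality~\eqref{eq:stab-condition}.

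For the upper bound I would introduce the substitution $\tilde n_h^\kk := 1 - n_h^\kk$. Since $L\mathbf{1}=0$ and $U\mathbf{1}=0$, the system becomes
\[
  (M_l+\dt\, L)\,\undel{\tilde n}^\kk \;=\; M_l\,\undel{\tilde n}^k + \dt\, U\,\undel\vp^\kk,
\]
and the same $M$-matrix argument reduces $\tilde n_h^\kk\ge 0$ to non-negativity of the new right-hand side. The roles of the two upwind branches are swapped: the negative contributions now come from edges with $\vp_j^\kk-\vp_i^\kk>0$, on which the upwind rule gives $B_{ij}^k = n_j^k(1-n_i^k)^2$, and the common factor $1-n_i^k = \tilde n_i^k$ can be pulled out. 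Applying the Fujii/$G_h$ estimate as before reproduces the very same condition~\eqref{eq:stab-condition}; strictness $n_h^\kk<1$ is then obtained inductively from the strict form of~\eqref{eq:stab-condition} and of the initial data $n_h^0<1$.

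The main obstacle I expect is the sign bookkeeping in the two bound arguments, where one must verify that the upwind definition~\eqref{eq:mob-up-1D} is precisely calibrated so that the ``bad'' edge contributions always carry a factor of $n_i^k$ (for the lower bound) or $1-n_i^k$ (for the upper bound); only this cancellation allows the diagonal mass to absorb the bad contributions under the single CFL restriction~\eqref{eq:stab-condition}. Everything else is a standard $M$-matrix argument together with the finite-element estimates (Fujii's inequality, $|\Lambda_i|\le G_h$) already used earlier in the paper.
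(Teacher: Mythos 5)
Your proposal is correct and follows essentially the same route as the paper: decouple the two equations (SPD system for $\vp_h^{k+1}$, then a linear system for $n_h^{k+1}$), invoke the $M$-matrix property of $M_l+\dt\,L$, and run the edge-by-edge upwind sign analysis with Fujii's estimate and $|\Lambda_i|\le G_h$ to arrive at exactly the condition \eqref{eq:stab-condition}. The only cosmetic difference is in the upper bound, which you obtain via the substitution $\tilde n = 1-n$ and the zero row sums of $L$ and $U$, whereas the paper argues directly using Varah's bound and subtracting $n_i^k$; both reduce to the same inequality.
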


\begin{proof}
\textit{Step 1. Existence of a unique solution. }
Assuming that $\{n^k_h,\vp_h^k\} \in K^h\times V^h$, from the Lax-Milgram theorem, it exists a unique solution $\vp^\kk_h \in V^h$ of Equation~\eqref{eq:numdiscrete-vp-expl} and Equation~\eqref{eq:numdiscrete-n-expl} admits a unique solution $n^\kk_h\in V^h$. 
Therefore, it exists a unique pair of discrete solutions $\{n^\kk_h,\vp^\kk_h\} \in V^h\times V^h$ for the system ~\eqref{eq:numdiscrete-n-expl}--\eqref{eq:numdiscrete-vp-expl}.
Next, we need to prove that $n^\kk_h$ is nonnegative and bounded from above by $1$.

\textit{Step 2. Nonnegativity and upper bound on ${n^\kk_h}$ for $d=1,2,3$. }
First, from the fact that $(M_l+\dt L^k)$ is a M-matrix, we know that its inverse is non-negative, i.e. $(M_l+\dt L^k)^{-1} \ge 0$.
Therefore, to preserve the non-negativity of $n^\kk_h$, we need that 
\[
    M_l \underline{n}^k -\dt U^k \underline{\vp}^{k+1}\ge 0.
\]
For every node $x_i\in J_h$, the previous condition reads
\[
    \abs{D_i} n_i^k -\dt \sum_{x_j\in \Lambda_i} B_{ij}^k Q_{ij} \left(\vp_i^{k+1}-\vp_j^{\kk}\right) \ge 0,
\]
where $\Lambda_i$ is the set of nodes connected to the node $x_i$ by an edge. 
From the fact that the mesh is acute, we know that $Q_{ij}$ is negative. Therefore, using the definition of the mobility coefficient \eqref{eq:mob-up-1D}, we need to focus on the case $\vp_j^{k+1}-\vp_i^{\kk} < 0$. In that situation, we have 
\[
    n_i^k - \f{\dt}{\abs{D_i}} \sum_{x_j\in \Lambda_i} n_i^k\left(1-n_j^k\right)^2 Q_{ij} \left(\vp_j^\kk-\vp_i^\kk \right)\ge 0.
\]

However, from~\eqref{eq:estimate-ratio-Q-ML}, we find the following condition to ensure the non-negativity of $n^\kk_h$
\[
     \f{(d+1)\, G_h\, \dt}{\kappa_h^2 } \max_{\substack{x_i \in J_h\\ x_j \in \Lambda_i}}\left( \vp_j^\kk-\vp_i^\kk\right) \le 1.
\]

Then, we need to prove that for every node $x_i\in J_h$ we have $n^\kk_i<1$. We use the upper bound presented by Varah~\cite{varah_lower_1975} for the inverse of M-matrices, and write 
\[
    \norm{ \left(\f{M_l}{\dt} +L^k\right)^{-1} }_\infty \le \f{\dt}{M_{l,ii}}.
\]
Therefore, to retrieve the upper bound on the discrete solution, the condition
\[
     n_i^k - \f{\dt}{\abs{D_i}} \sum_{x_j\in \Lambda_i} n_j^k\left(1-n_i^k\right)^2 Q_{ij} \left(\vp_j^\kk-\vp_i^\kk \right) <1,
\]
has to be satisfied. Note in the previous equation that we have considered the case $\vp_j^\kk-\vp_i^\kk >0$ since in the other case the bound will be satisfied trivially. 
Then, subtracting $n_i^k$ from both sides of the previous inequality, we obtain 
\[
      - \f{\dt}{\abs{D_i}} \sum_{x_j\in \Lambda_i} n_j^k\left(1-n_i^k\right) Q_{ij} \left(\vp_j^k-\vp_i^k \right) <1,
\]
and we retrieve the same condition as before with a strict inequality. 

Altogether, we proved the existence of a unique solution $\{n^\kk_h,\vp^k_h \}\in K^h\times S^h$ for the system ~\eqref{eq:numdiscrete-n-expl}--\eqref{eq:numdiscrete-vp-expl} with $0\le n^\kk_h < 1$ if the stability condition \eqref{eq:stab-condition} is satisfied.
\end{proof}

\section{Numerical simulations}
\label{sec:simulations}

\AP{
In this section, we use the previously presented linear scheme~\eqref{eq:numdiscrete-n-expl}--\eqref{eq:numdiscrete-vp-expl} for the RDCH system.	
The numerical simulations are performed using the MATLAB software. At each time step the matrices $U^k$ and $L^k$ are reassembled. The linear system is solved using the function \textit{linsolve} of the MATLAB software. This function uses the LU factorization. }

Even though we are presenting numerical results obtained using the linear scheme~\eqref{eq:numdiscrete-n-expl}--\eqref{eq:numdiscrete-vp-expl}, the evolution of the energy during the simulations is given from the computation of the discrete formulation of the continuous energy
\begin{equation*}
	E(n_h^\kk,\vp_h^\kk) := \int_\Omega \frac{\gamma}{2} \abs{\nabla\left( n_h^\kk  - \frac{\sigma}{\gamma} \vp_h^\kk\right)}^2 + \frac{\sigma}{2\gamma}|\vp_h^\kk|^2 + \psi_{+} (n_h^\kk)+ \psi_{-}\left(n_h^\kk -\frac{\sigma}{\gamma}\vp_h^\kk\right) \, \dd x .
	\label{eq:energy-simu}
 \end{equation*}
First of all, we present test cases in one and two dimensions to validate our method. The physical properties of the solutions such as the shape of the aggregates, the energy decay, the mass preservation and the non-negativity of the solution are the key characteristics we need to observe to validate our method. A comparison with previous results from the literature is also of main importance. The reference used for this study is the work of Agosti \textit{et al.} \cite{agosti_cahn-hilliard-type_2017}. The analysis of the long-time behavior of the solutions of the RDCH equation \cite{poulain_relaxation_2019} gives us some insights about what we should observe at the end of the simulations. The solutions should evolve to steady-states that are minimizers of the energy functional. Depending on the initial mass, three regions of the cell density should appear. The first being the region of absence of cells, the second the continuous interface linking the bottom and the top of the aggregates. If the initial mass is sufficiently large, the third expected region is a plateau of the cell density close to $n=n^\star$. 
\AP{This evolution is related to the clustering of tumor cells in \textit{in-vitro} biological experiments (see \eg \cite{bubba,agosti_self-organised_nodate}).}

The study of the effect of the regularization on the numerical scheme is the purpose of the last subsection.

\par

\subsection{Numerical results: test cases}
\subsubsection*{1D test cases}
Table \ref{tab:parameter1D} summarizes the parameters used for the one dimensional test cases. The initial cell density is a uniform distributed random perturbation around the values $n^0$.
\begin{table}
	\centering
	\caption{Parameters of the 1D test case}
	\begin{tabular}{|c|c|}
		\hline
		&Parameters  \\
		\hline
		$\gamma$& $(0.014)^2$ \\
		$\Delta t$& $0.1 \gamma$\\
		$h$ & $0.01$  \\
		$n^0$ & $\{0.05,0.3,0.36\}$\\
		$n^\star$& $0.6$ \\
		$\sg$&  $5.10^{-5}$\\
		\hline
	\end{tabular}
	\label{tab:parameter1D}
\end{table}
Figures ~\ref{fig:sol3steps} show the evolution in time of the solutions $n_h$ for the three different initial masses. 

\begin{figure}[h!!]
	\centering
	\includegraphics[scale = 0.31]{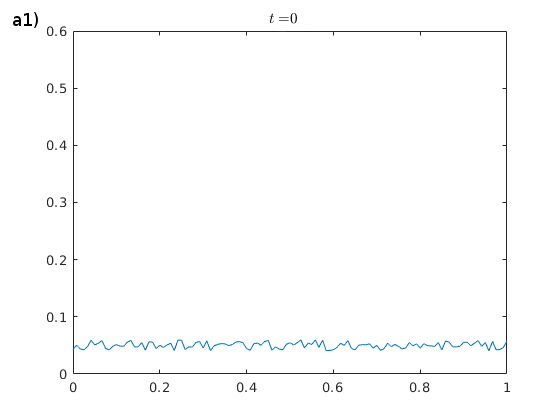}
	\includegraphics[scale = 0.33]{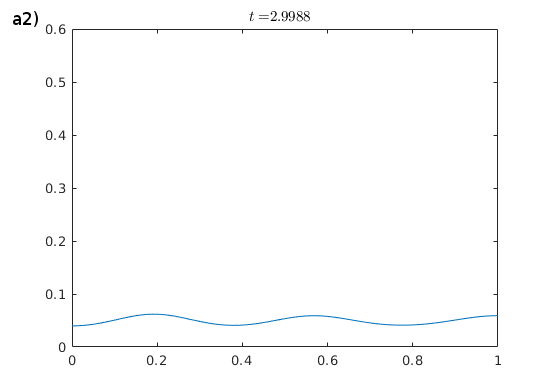}
	\includegraphics[scale = 0.33]{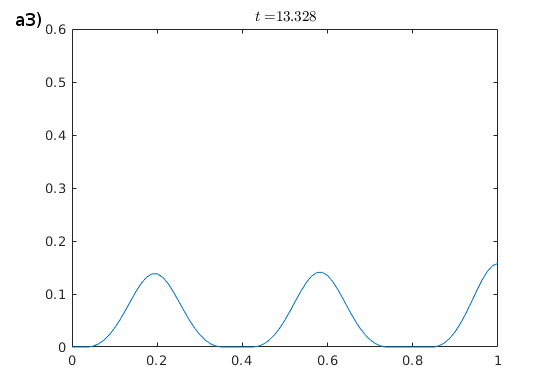}
\\
	\centering
	\includegraphics[scale = 0.33]{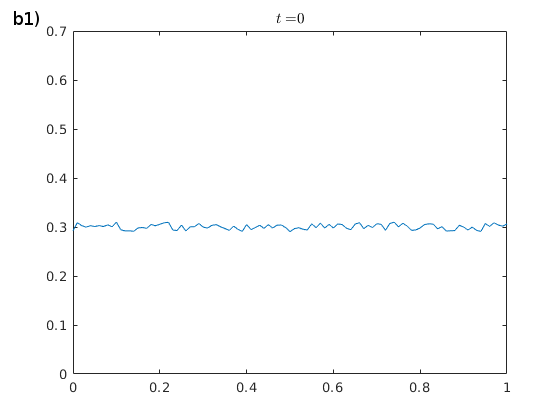}
	\includegraphics[scale = 0.31]{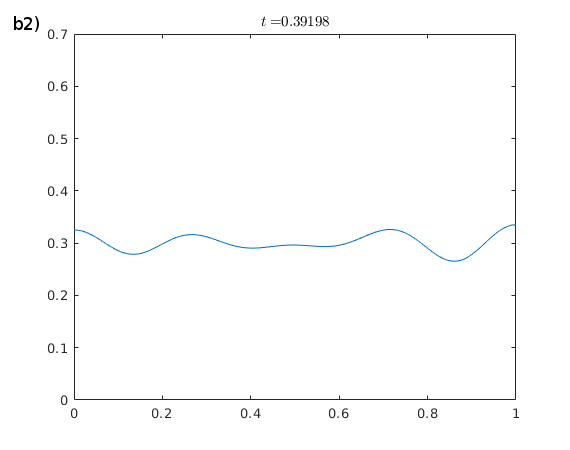}
	\includegraphics[scale = 0.33]{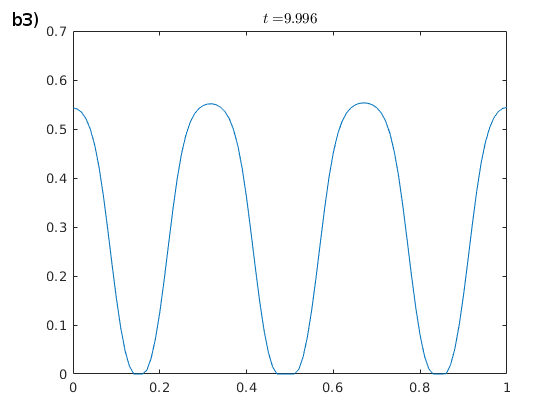}
\\
	\centering
	\includegraphics[scale = 0.33]{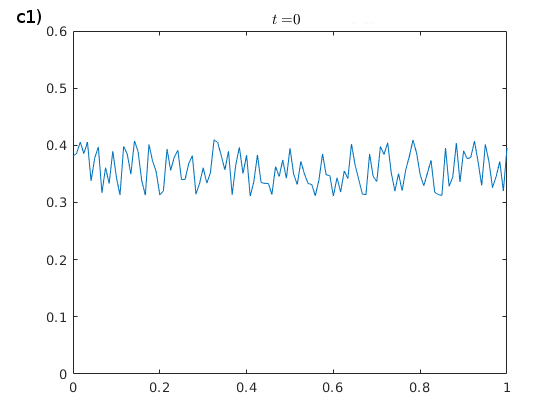}
	\includegraphics[scale = 0.35]{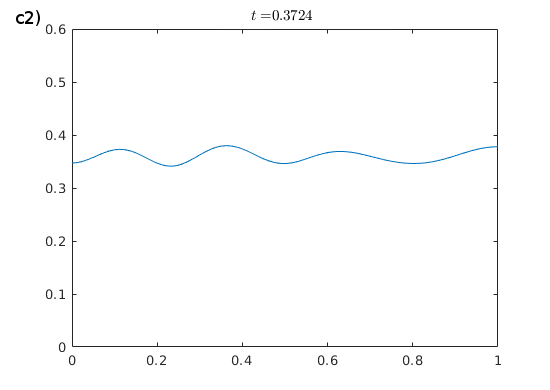}
	\includegraphics[scale = 0.31]{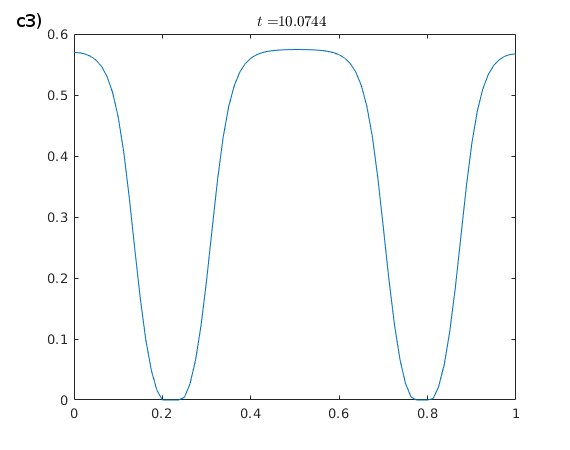}
	\caption{Solution $n_h$ at $3$ different times with $n^0=0.05$(a1,a2,a3), $n^0=0.3$ (b1,b2,b3) and $n^0=0.36$ (c1,c2,c3).}
	\label{fig:sol3steps}
\end{figure}

We can observe that the solution for each of the three test cases remains nonnegative and the mass is conserved throughout the simulations. From Figure~\ref{fig:energy1D}, we observe that the energies decrease monotonically for the three simulations but at different speeds. They all display at the end of the computation a stable (or metastable) state that is a global (or respectively a local) minimizer of the discrete energy.

For the initial condition $n^0=0.3$ (Figures~\ref{fig:sol3steps} \hyperref[fig:sol3steps]{b1)}, \hyperref[fig:sol3steps]{b2)}, \hyperref[fig:sol3steps]{b3)} and Figure~\ref{fig:energy1D} at the middle), the energy decreases rapidly and reaches a plateau showing that the solution evolves rapidly to a steady state. The solution at $t=10$ presents aggregates that are not saturated (i.e. the maximum density is below $n^\star$). The explanation behind this observation is that the initial mass is not sufficiently large for the system to produce any saturated aggregates. However, the clusters appear to be of similar thickness and are relatively symmetrically distributed in the domain. 

For $n^0 = 0.36 > n^\star/2$ (Figures \ref{fig:sol3steps} \hyperref[fig:sol3steps]{c1)}, \hyperref[fig:sol3steps]{c2)}, \hyperref[fig:sol3steps]{c3)}), the aggregates are thicker. The top of the aggregate located at the center of the domain is flatter than for the other simulation. The maximum density is closer to the value $n^\star$ than for the initial condition $n^0=0.3$. Likewise, the symmetry in the domain is respected. Using Figure \ref{fig:energy1D} on the right, we observe that at different times, the energy evolves through several meta-stable equilibria. This reflects the fact that the solution went to different meta-stable states before reaching a stable equilibrium that better minimizes the energy. 

% Explications
For the initial condition $n^0 = 0.05$ (Figures \ref{fig:sol3steps} \hyperref[fig:sol3steps]{a1)}, \hyperref[fig:sol3steps]{a2)}, \hyperref[fig:sol3steps]{a3)}), the shape of the final solution is different. The aggregates appear to be thinner and far from each other. The symmetry is not retrieved in the domain. Furthermore, from Figure \ref{fig:energy1D} (on the left), we can observe that the evolution of the solution is slow compared to the other initial conditions. The energy seems to be constant in the first moment of the simulation (i.e. after the spinodal decomposition phase). The slow evolution of the solution is explained from the fact that the mobility is degenerate and the amount of mass available in the domain is small. Using Figure \ref{fig:energy1D}, we can also see that the energy continues to decrease even at the end of the simulation. To keep comparable simulation times, we did not reach the complete steady state.

% Explications

\begin{figure}
	\centering
	\includegraphics[scale=0.34]{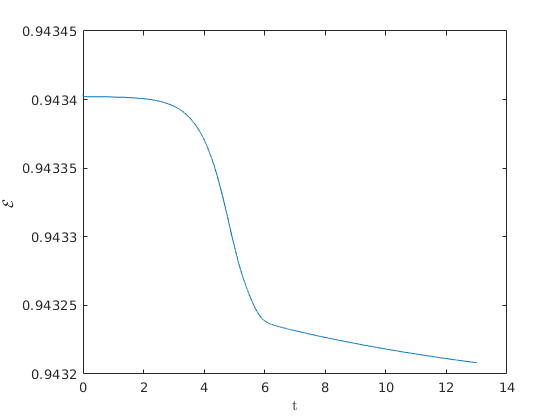}
	\includegraphics[scale=0.32]{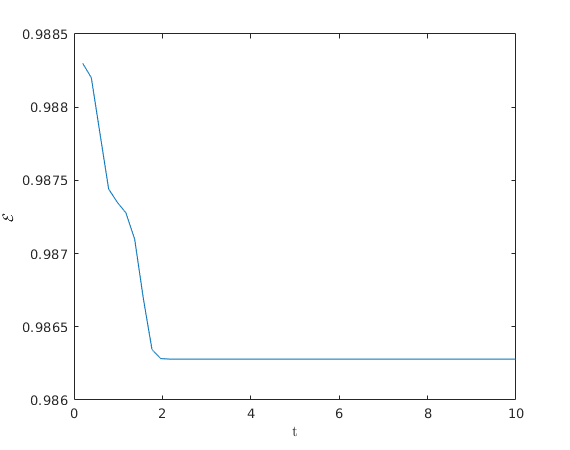}
	\includegraphics[scale=0.32]{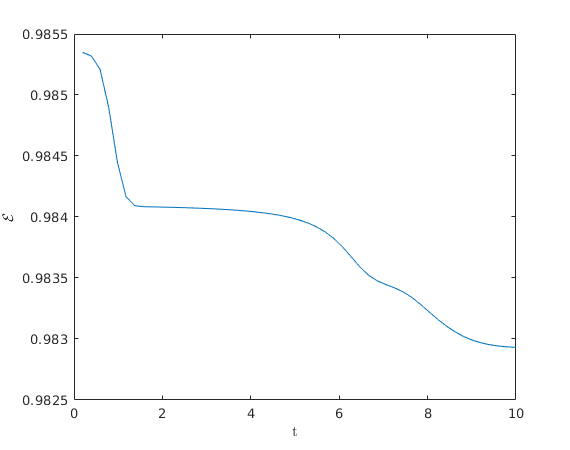}
	\caption{Evolution of discrete energy through time for the 3 initial conditions (from left to right $n^0=\{0.05,0.3,0.36\}$)}
	\label{fig:energy1D}
\end{figure}
Let us compare qualitatively these results with the ones obtained in \cite{agosti_cahn-hilliard-type_2017} for the one dimensional case. For the two test cases $n^0=0.3$ and $n^0=0.36$, there is no differences in the shape the aggregates or the distribution of the mass in the domain.
For $n^0=0.05$, some small discrepancies with the final solutions are observed. In particular, the symmetry of the aggregates in the domain is not respected in our case whereas it is in the reference work. We must stress that doing other simulations, the symmetry was sometimes reached at the time $t\approx 100$ for the initial condition $n^0=0.05$. The reason is that the system will evolve to respect the symmetry but the time at which this stable-steady state is reached depends on the initial distribution of the cell density.

Altogether, the solutions obtained at the end the three simulations are in accordance with the description of the steady-states made in \cite{poulain_relaxation_2019}. The three regions of interest are indeed retrieved at the end of each simulation. 
\subsubsection*{2D test cases}
For the two-dimensional test cases, the domain is a square of length $L=1$. The initial density is computed in the same way as for the one-dimensional test cases i.e. a random uniformly distributed perturbation around $n^0$.
The summary of the values of parameters can be found in Table~\ref{tab:param-2D}.
\begin{table}
	\centering
	\caption{Parameters of the test cases}
	\begin{tabular}{|c|c|}
		\hline
		&Parameters  \\
		\hline
		$\gamma$& $0.014^2$ \\
		$\Delta t$& $2\gamma$\\
		$h$ & $1/64$  \\
		$n^0$ & $[0.05,0.3,0.36]$\\
		$n^\star$& $0.6$ \\
		$\sg$&  $10^{-5}$\\
		\hline
	\end{tabular}
	\label{tab:param-2D}
\end{table}
Figure~\ref{fig:sol3steps-2D} depicts the results of the three test cases with different initial masses. The three simulations respect the nonnegativity of the cell density, the conservation of the initial mass and the monotonic decay of the discrete energy. However, different shapes can be observed for the aggregates. \par
Figures~\ref{fig:sol3steps-2D} \hyperref[fig:sol3steps-2D]{a1)},\hyperref[fig:sol3steps-2D]{a2)},\hyperref[fig:sol3steps-2D]{a3)} show the evolution of the solution through time for the small initial mass ${n^0=0.05}$. Starting from a uniform random distribution of the cell density in the domain, the solution evolves into a more organized configuration. Progressively, a separation of the two phases of the mixture occurs. At the end of the simulation, small clusters are formed. They display a circular shape and are of similar width. The organization of the clusters in the domain tries to maximize the distance between each other. Using the Figure \ref{fig:energy2D} (left), we observe a drop of the energy in the first moments of the simulation denoting a fast reorganization of the randomly distributed initial condition. Then, the solution appears to evolve very slowly, i.e. a meta-stable state was reached. A second drop of the energy follows around $t\approx 15$, the system enters the "coarsening" phase: the small aggregates become more dense and merge with others. At the end, the evolution is very slow. The system continues to rearrange but due to the degeneracy of the mobility and the small amount of initial mass this process is very slow.

\begin{figure}[!ht]
	\centering
	\includegraphics[scale = 0.33]{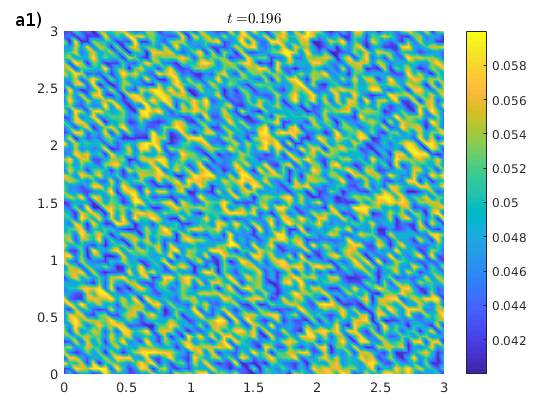}
	\includegraphics[scale = 0.33]{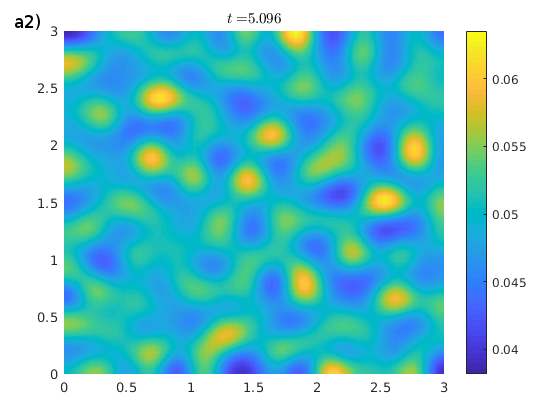}
	\includegraphics[scale = 0.33]{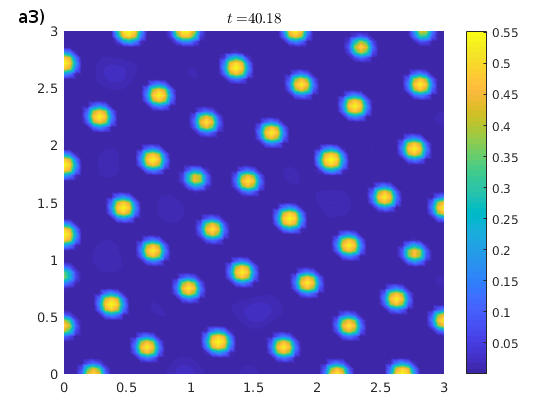}
	\\
	\centering
	\includegraphics[scale = 0.33]{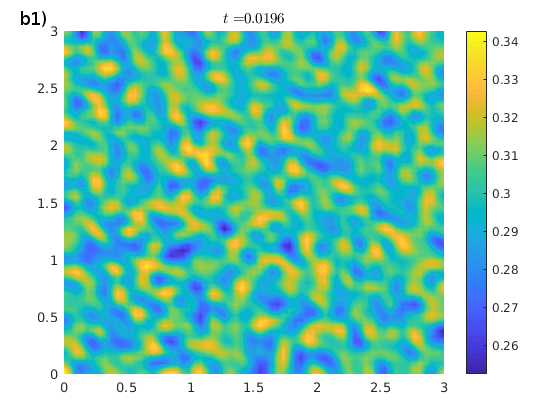}
	\includegraphics[scale = 0.33]{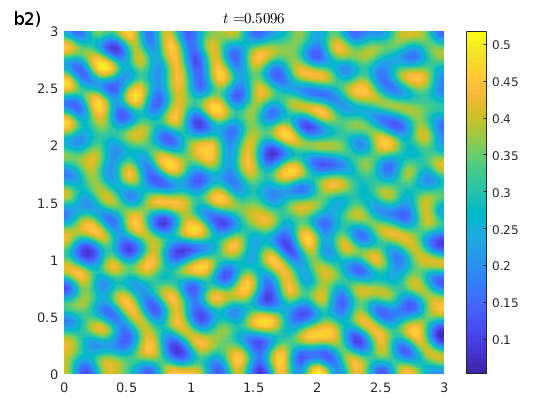}
	\includegraphics[scale = 0.33]{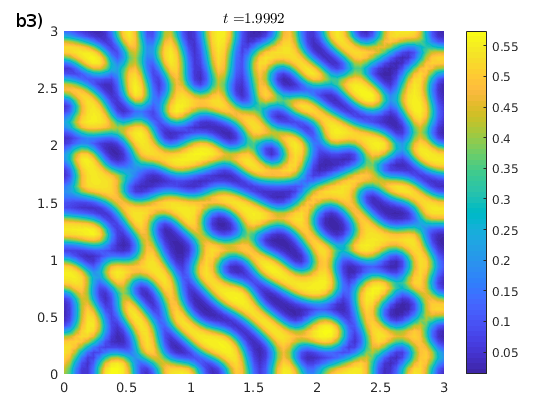}
	\\
	\includegraphics[scale = 0.33]{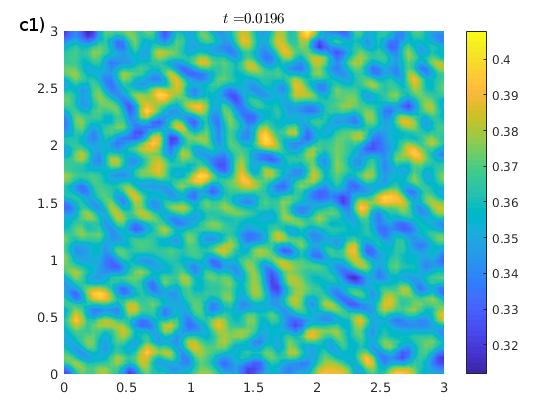}
	\includegraphics[scale = 0.33]{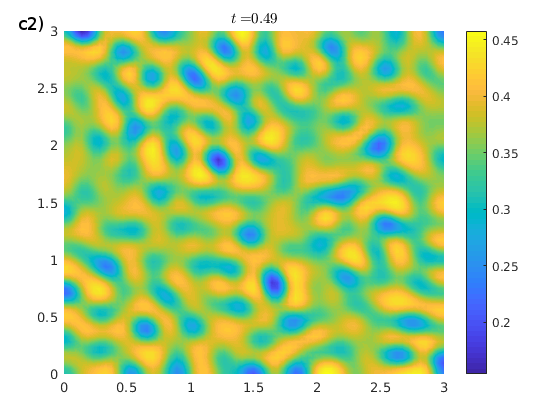}
	\includegraphics[scale = 0.33]{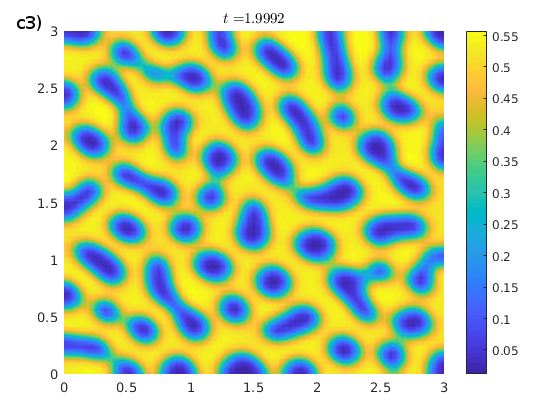}
	\caption{Solution $n_h$ at $3$ different times with $n^0=0.05$ (a1,a2,a3), $n^0=0.3$ (b1,b2,b3) and $n^0=0.36$ (c1,c2,c3).}
	\label{fig:sol3steps-2D}
\end{figure}
Figures \ref{fig:sol3steps-2D} \hyperref[fig:sol3steps-2D]{b1)},\hyperref[fig:sol3steps-2D]{b2)},\hyperref[fig:sol3steps-2D]{b3)} show the evolution of the solution for $n^0=0.3$. 
The two phases that are the spinodal decomposition and the coarsening are retrieved. Between the Figures \ref{fig:sol3steps-2D} \hyperref[fig:sol3steps-2D]{b1)} and \ref{fig:sol3steps-2D} \hyperref[fig:sol3steps-2D]{b2)}, we observe that the solution evolves from a random uniform configuration to an organization in small aggregates that are not saturated. Then (Figure \ref{fig:sol3steps-2D} \hyperref[fig:sol3steps-2D]{b3)}), the cell density is distributed in elongated and saturated aggregates. The separation of the two phases is clear. However, using Figure \ref{fig:energy2D} (middle), we observe that at the end of the simulation the cell density continues to rearrange. Due to the degeneracy of the mobility, this evolution is very slow. 
% New figure with stable steady state

In Figures \ref{fig:sol3steps-2D} \hyperref[fig:sol3steps-2D]{c1)},\hyperref[fig:sol3steps-2D]{c2)},\hyperref[fig:sol3steps-2D]{c3)}, we can observe the evolution of the solution for $n^0=0.36$. Again, the solution goes through the spinodal decomposition and coarsening phases. 
The only difference that needs to be highlighted for this simulation is the different shape of the aggregates at the end. Indeed, the initial mass being $n^0 = 0.36 > n^\star/2$, the aggregates are wider and more connected to each others. 

Therefore, depending on the initial mass of cells in the domain, the 2D simulations of the model show very different spatial organizations of the cell density.  

\begin{figure}
	\centering
	\includegraphics[scale=0.32]{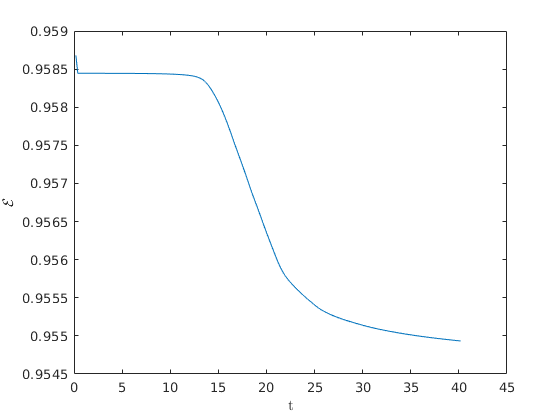}
	\includegraphics[scale=0.32]{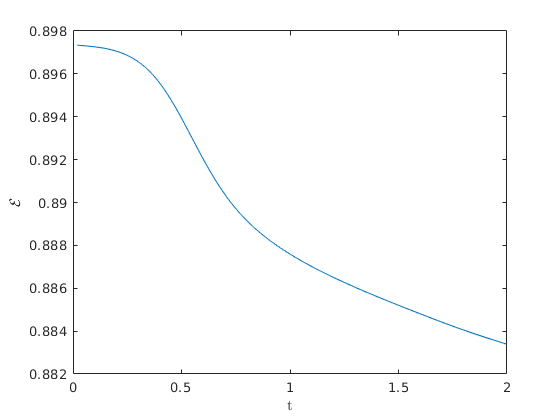}
	\includegraphics[scale=0.32]{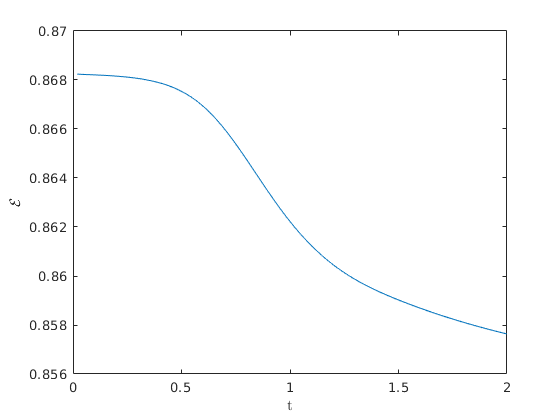}
	\caption{Evolution of discrete energy through time for the 3 initial conditions (from left to right $n^0=\{0.05,0.3,0.36\}$).}
	\label{fig:energy2D}
\end{figure}

Compared to the reference work \cite{agosti_cahn-hilliard-type_2017}, the organizations of the cells for the different initial cell densities are the same. No clear difference can be established regarding the simulation involving the relaxed model and the original one.

The three regions corresponding to a steady-state described in \cite{poulain_relaxation_2019} are retrieved at the end of the simulations for these 2D test cases.

\subsection{Effect of the relaxation parameter \texorpdfstring{$\sg$}{}}
In this section we evaluate the effect of the relaxation parameter $\sg$ for the stability of the scheme, and in particular to satisfy the CFL-like condition \eqref{eq:stab-condition}. This conditions is necessary to preserve the nonnegativity of the solutions of the linear discrete scheme. To evaluate the effect of this parameter on the choice of the time step $\dt$, we compute the amplification matrix $H$ defined by
\[
X^{k+1} = H X^k, \quad \text{with} \quad X^k = \begin{bmatrix} \underline{n}^k \\ \underline{\vp}^k\end{bmatrix}.
\]
Here, $X^k$ is called the state vector.
Using the matrix form of the scheme ~\eqref{eq:numdiscrete-n-expl}--\eqref{eq:numdiscrete-vp-expl} we can decomposed the amplification matrix by $H = H_1^{-1}H_2$ with
\[
H_1 = \begin{bmatrix}  0 & \sigma Q +M_l \\
M_l + \Delta t L^k & \Delta t U^k \end{bmatrix}, \qquad H_2 = \begin{bmatrix} \gamma Q-(1-n^\star)M_l & \frac{\sigma}{\gamma}(1-n^\star)M_l \\
M_l & 0\end{bmatrix}.
\]
We denote by $\lambda_i, i=1,\dots,N_h$, the eigenvalues of the amplification matrix $H$.

To analyze the stability of the numerical scheme due to the relaxation parameter, we compute the spectral radius of the amplification matrix
\[
\rho(H(\dt)) = \max_{i}(|\lambda_i|),
\]
for a smooth initial conditions. 
The scheme is stable when the maximum value of the modulus of the eigenvalues is less or equal to $1$. 
Figure~\ref{fig:stability1D} represents the spectral radius in function of the time step $\dt$ for two values of $\sg$ (the other parameters are the ones taken from the one dimensional test cases with $n^0=0.3$). 
\begin{figure}
     \centering
    \subfloat[$\sg = 10^{-5}$]{\includegraphics[width=0.48\textwidth]{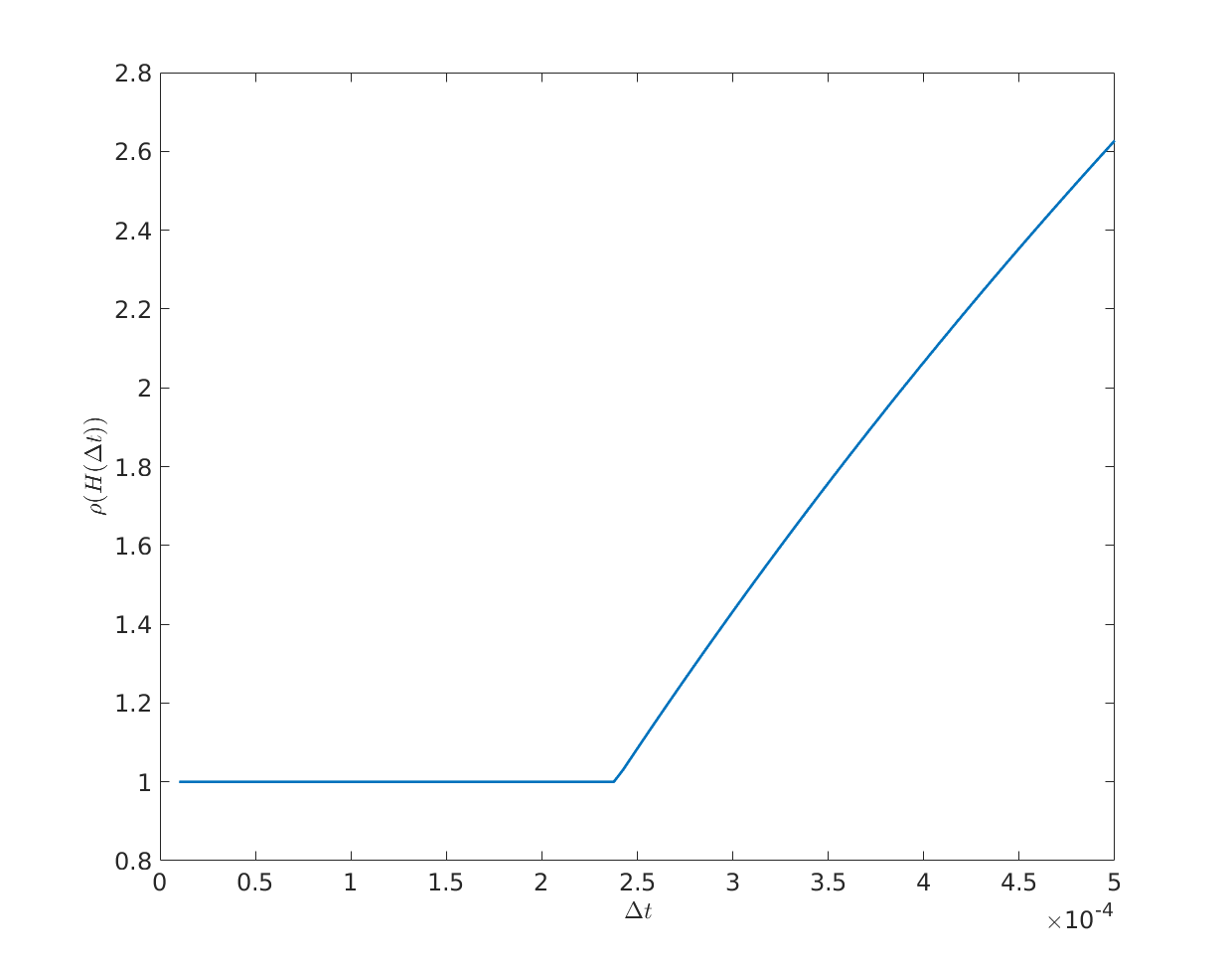}} \hfill
    \subfloat[ $\sg = 10^{-4}$]{\includegraphics[width=0.48\textwidth]{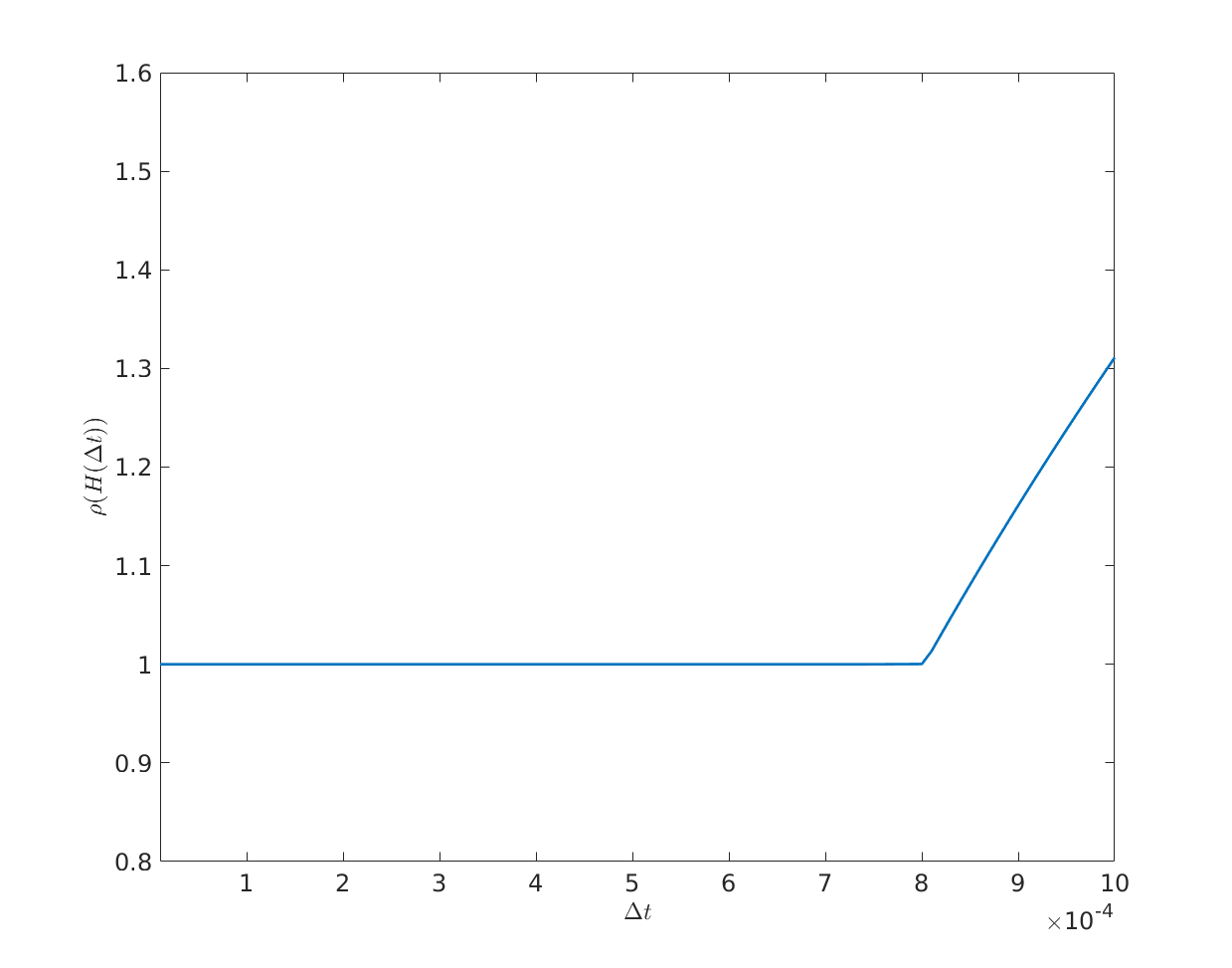}}
	\caption{Spectral radius as a function of $\dt$ for $\sg = 10^{-5}$(left) and $\sg = 10^{-4}$ (right).}
	\label{fig:stability1D}
\end{figure}
We can observe that the scheme remains stable when $\dt$ is small for the two test cases.
However, we see that increasing $\sg$ allows to take larger time steps while remaining stable. This result can be explained due to the fact that increasing $\sg$ diminishes the value 
\[
 \max_{\substack{x_i \in J_h\\ x_j \in \Lambda_i}}\left( \vp_j^\kk-\vp_i^\kk\right),
 \]
  present in the stability condition  \eqref{eq:stab-condition}. Therefore, the regularization induced by the relaxation parameter allows for faster simulations, but it has an effect on the accuracy of the solution compared to the solution given by the non-relaxed model. However,  at the moment it remains unclear how to compare the solution given by a simulation of the relaxed model and a solution of the original degenerate model (without relaxation). This will be the subject of a further work.

%------------------------------------------------------
\section{Conclusion}
We described and studied a finite element method to solve the relaxed degenerate Cahn-Hilliard equation with single-well logarithmic potential. 
%The spatial relaxation proposed in~\cite{poulain_relaxation_2019} enables to overcome the resolution of the original fourth-order equation thanks to a spatial regularization that can be solve using standard finite element methods. 
We considered two time discretization schemes: nonlinear semi-implicit and linear semi-implicit.\newline
We showed that the nonlinear scheme preserves the physical properties of the solutions of the continuous model. We proved that this scheme is well-posed and convergent in dimension $d=1,2,3$. The nonnegativity of the solutions is retrieved thanks to the use of an upwind method adapted within the finite element framework. 
The linear semi-implicit scheme allows faster simulations and we proved that it is well-posed and preserves the nonnegativity of the solutions as well. We presented some numerical simulations using this linear scheme in one and two dimensions. The numerical simulations validated the nonnegativity-preserving and energy decaying properties of the scheme. The numerical solutions of the finite element approximation of the RDCH model are in good agreement with previous works dealing with the non-relaxed model. 
We showed that the relaxation parameter $\sigma$ allows us to take a larger time step in the scheme (as long as the condition for the nonnegativity is preserved and $\sigma < \gamma$). 
We point out that thanks to the spatial relaxation, our numerical scheme can be easily implemented and simulations of the relaxed degenerate Cahn-Hilliard model can be computed efficiently using standard softwares. \newline

\section*{Acknowledgements}
A.P. would like to thank Professor Katharina Schratz for useful discussions and valuable comments.
%------------------------------------------------------

\appendix

\section{Proof of Theorem~\ref{th:conv}}
\label{app:proof}
First, we choose a test function $\chi \in C^\infty_c(\Omega_T,\R^+)$ with $\chi(T,\cdot)=0$. Then, we multiply Equation~\eqref{eq:equation-FV-form} by $\dt_m \chi(x_i,t^k) = \dt_m \chi^k_i$, sum over the $x_i \in J_h$ and over the $k=0,\dots,N_T-1$ to obtain 
\[
    B_m + C_m + D_m + E_m + F_m + G_m = 0,  
\]
with
\begin{align}
    B_m &=  \sum_{k=0}^{N_T-1}\lscal{n_h^\kk - n^k_h}{\chi^k_h},\\
    C_m &= \f\gamma\sg \sum_{k=0}^{N_T-1}  \dt_m \sum_{i=0}^{N_h}\sum_{x_j\in \Lambda_i} \abs{Q_{ij}} \left(\hat B^\kk_{ij}(n^\kk_i-n^\kk_j) - \sqrt{\hat B^\kk_{ij}} (\eta^\kk_i-\eta^\kk_j) \right) (\chi^k_i-\chi_j^k),\\
    D_m &= \f\gamma\sg \sum_{k=0}^{N_T-1}\dt_m \sum_{i=0}^{N_h}\sum_{x_j\in \Lambda_i} \sqrt{\hat B^\kk_{ij}} (\eta^\kk_i-\eta^\kk_j)  (\chi^k_i-\chi_j^k) ,\\
    E_m &= \sum_{k=0}^{N_T-1} \dt_m \sum_{i=0}^{N_h}\sum_{x_j\in \Lambda_i} \abs{Q_{ij}} \left(\widehat{\left(B \psi^{''}_+ \right)}^\kk_{ij}(n^\kk_i-n^\kk_j) - \sqrt{\widehat{\left(B \psi^{''}_+ \right)}^\kk_{ij}} (\zeta^\kk_i-\zeta^\kk_j) \right) (\chi^k_i-\chi_j^k),\\
    F_m &= \sum_{k=0}^{N_T-1} \dt_m  \sum_{i=0}^{N_h}\sum_{x_j\in \Lambda_i}\sqrt{\widehat{\left(B \psi^{''}_+ \right)}^\kk_{ij}} (\zeta^\kk_i-\zeta^\kk_j)  (\chi^k_i-\chi_j^k),\\
    G_m &= -\f{\gamma}{\sigma} \sum_{k=0}^{N_T-1} \dt_m\sum_{i=0}^{N_h}\sum_{x_j \in \Lambda_i}  \tilde B_{ij}^\kk \abs{Q_{ij}} (w_i^\kk-w_j^\kk)(\chi^k_i-\chi_j^k).
\end{align}
Next, we show the limit of each quantity when $m\to\infty$. 

For $B_m$, since $\chi^{N_T}_h = 0$, we have (see~\cite{Eymard2002ConvergenceOA})
\[
    \begin{aligned}
    B_m &= \sum_{k=0}^{N_T-1}\lscal{n_h^\kk }{\chi^k_h}-\sum_{k=1}^{N_T}\lscal{n_h^k }{\chi^k_h} - \lscal{n_h^0 }{\chi^0_h} \\
        &= - \sum_{k=0}^{N_T-1} \dt_m \lscal{n_h^\kk }{\f{\chi^\kk_h-\chi_h^k}{\dt_m}}- \lscal{n_h^0 }{\chi^0_h}.
    \end{aligned}
\]
From the strong convergence~\eqref{eq:strong-conv} and the regularity of $\chi$, we obtain 
\[
    B_m \to   -\int_{\Omega_T} n  \f{\p \chi}{\p t} \,\dd x\,\dd t - \scal{n^0 }{\chi^0},\quad \text{as} \quad m\to\infty.
\]

For $C_m$, we aim to show that $C_m \to 0$ as $m\to \infty$. To do so, we rewrite 
\[
    C_m =  \f\gamma\sg \sum_{k=0}^{N_T-1}  \dt_m \sum_{i=0}^{N_h}\sum_{x_j\in \Lambda_i} \abs{Q_{ij}} \sqrt{\hat B^\kk_{ij}} \left(\sqrt{\hat B^\kk_{ij}} - \sqrt{\ov B^\kk_{ij}} \right) (n^\kk_i-n^\kk_j) (\chi^k_i-\chi_j^k),
\]
where 
\[
    \ov  B^\kk_{ij} = \begin{cases}
        \left(\f{\eta^\kk_i-\eta^\kk_j}{n^\kk_i-n^\kk_j}\right)^2,\quad&\text{if}\quad n^\kk_i\neq n^\kk_j,\\
        \hat B^\kk_i,\quad&\text{if}\quad n^\kk_i= n^\kk_j.
    \end{cases}
\]
From the Cauchy-Schwarz Inequality, we have 
\[
    \begin{aligned}
    \abs{C_m} \le \f\gamma\sg \Big(\sum_{k=0}^{N_T-1}  \dt_m \sum_{i=0}^{N_h}\sum_{x_j\in \Lambda_i} &\abs{Q_{ij}} \hat B^\kk_{ij}(n^\kk_i-n^\kk_j)^2 \Big)^{1/2} \\
    &\times \left(\sum_{k=0}^{N_T-1}  \dt_m \sum_{i=0}^{N_h}\sum_{x_j\in \Lambda_i} \abs{Q_{ij}} \left(\sqrt{\hat B^\kk_{ij}} - \sqrt{\ov B^\kk_{ij}} \right)^2 (\chi^\kk_i-\chi^\kk_j)^2 \right)^{1/2}. 
    \end{aligned}
\] 
The first term on the right-hand side is bounded from \eqref{eq:energy}. The second term, that we denote by $R_m$, is handled using Lemma A.1 in~\cite{cances}. Indeed, we denote for each $K \in \mathcal{T}^{h_m}$, 
\[
    \ov \eta^\kk_K = \max_{x_i\in K}(\eta^\kk_h),\quad \undel \eta^\kk_K = \min_{x_i\in K}(\eta^\kk_h), 
\]
and we define the uniformly continuous function $\sqrt{b \circ \eta }$, defined on the interval $[0,\eta(1)]$, such that for each $K \in \mathcal{T}^{h_m}$,
\[
    \abs{\sqrt{\hat B^\kk_{ij}}- \sqrt{\ov B^\kk_{ij}}}  \le \omega (\ov \eta^\kk_K - \undel \eta^\kk_K),\quad \forall x_j\in K\cap \Lambda_i.
\]   
Therefore, we have 
\[
    \begin{aligned}
    R_m = \sum_{k=0}^{N_T-1}  &\dt_m \sum_{i=0}^{N_h}\sum_{x_j\in \Lambda_i} \abs{Q_{ij}} \left(\sqrt{\hat B^\kk_{ij}} - \sqrt{\ov B^\kk_{ij}} \right)^2 (\chi^\kk_i-\chi^\kk_j)^2  \\
    &\le  \sum_{k=0}^{N_T-1}  \dt_m    \sum_{K\in\mathcal{T}^{h_m}}\left( \omega (\ov \eta^\kk_K - \undel \eta^\kk_K)\right)^2  \sum_{i=0}^{N_h}       \sum_{x_j\in \Lambda_i} \abs{Q_{ij}} (\chi^\kk_i-\chi^\kk_j)^2 .
    \end{aligned}
\]
From the regularity of $\chi$, we know that $ \sum_{i=0}^{N_h}       \sum_{x_j\in \Lambda_i} \abs{Q_{ij}} (\chi^\kk_i-\chi^\kk_j)^2 $ is bounded and we obtain 
\[
    R_m \le C \int_{\Omega_T} \omega  (\ov \eta^\kk_K - \undel \eta^\kk_K) \,\dd x\,\dd t.
\]
From Lemma A.1 in~\cite{cances}, we obtain $R_m \to 0, \quad \text{as}\quad m\to \infty,$ hence, $C_m \to 0$. Using similar arguments, we have $E_m \to 0$, as $m\to\infty$.

We now turn to the convergence of $D_m$. To this end, we define two piecewise constant quantities on each element $K\in \mathcal{T}^{h_m}$
\[
    \Theta_{h_m,\dt_m} = \sqrt{b \circ \eta^{-1}}(\Gamma_{h_m,\dt_m}), \quad \text{and} \quad \Gamma_{h_m,\dt_m}(t,x) = \eta_{h_m,\dt_m}(t,x_T),\quad \forall x\in K,\quad t\in (t^k,t^\kk].    
\]
Then, we define
\[
    \begin{aligned}
    D'_m &= \f\gamma\sigma\int_{\Omega_T} \Theta_{h_m,\dt_m}\nabla \eta_{h_m,\dt_m} \cdot\nabla \chi_{h_m,\dt_m}\,\dd x\,\dd t,\\
    &=  \f\gamma\sigma\sum_{k=0}^{N_T-1}  \dt_m \sum_{K\in \mathcal{T}^{h_m}} \Theta_{h_m,\dt_m} \sum_{x_i\in K} \sum_{x_j\in K \cap \Lambda_i} \abs{Q_{ij}} (\eta^\kk_i-\eta^\kk_j)  (\chi^k_i-\chi_j^k).
    \end{aligned}
\]

From a generalization of Lemma A.1 in~\cite{cances}, and the boundedness of the two previous quantities, we have, as $m\to\infty$, 
\[
    \begin{aligned}
        \Theta_{h_m,\dt_m} \to \sqrt{b(n)},\quad\text{in}\quad L^2(\Omega_T),\\
        \Gamma_{h_m,\dt_m} \to \eta(n),\quad\text{in}\quad L^2(\Omega_T),
    \end{aligned}
\]
from which we conclude 
\[
    D'_m \to \f\gamma\sigma\int_{\Omega_T} \sqrt{b(n)}\nabla \eta(n) \cdot \nabla \chi\,\dd x\,\dd t,\quad \text{as}\quad m\to \infty.  
\]
However, we need to show that $\abs{D_m-D'_m}\to 0$ as $m\to \infty$. 
We use similar arguments as to show that $C_m \to 0$. Indeed, we rewrite
\[
    D_m-D'_m = \f\gamma\sigma\sum_{k=0}^{N_T-1}  \dt_m \sum_{i =0}^{N_h} \sum_{K\in \mathcal{T}^{h_m}: x_i\in K} \sum_{x_j\in K \cap \Lambda_i} \abs{Q_{ij}} \left(\sqrt{\hat B^\kk_{ij}} - \sqrt{ a^\kk_K} \right) (\eta^\kk_i-\eta^\kk_j)  (\chi^k_i-\chi_j^k)
\]
with $\sqrt{ a^\kk_K} = \Theta^\kk_K$, we are in position to repeat the arguments presented for the convergence of $C_m \to 0$ and we do not repeat them here. Therefore, we obtain, as $m\to\infty$,
\[
    D_m \to \f\gamma\sigma\int_{\Omega_T} \sqrt{b(n)}\nabla \eta(n) \cdot\nabla \chi\,\dd x\,\dd t.
\]

The same arguments are also applied to the convergence of $F_m$ to obtain, as $m\to\infty$, 
\[
    F_m \to \int_{\Omega_T} \sqrt{b(n)\psi^{''}_{+}(n)}\nabla \zeta(n) \cdot\nabla \chi\,\dd x\,\dd t. 
\]

The last term of the first equation to analyze is $G_m$. We repeat again the same arguments. We use the previously defined quantities $\Theta_{h_m,\dt_m}$ and $\Gamma_{h_m,\dt_m}$. Then, we define 
\[
    \begin{aligned}
    G'_m &= \f\gamma\sigma\int_{\Omega_T} (\Theta_{h_m,\dt_m})^2\nabla w_{h_m,\dt_m} \cdot\nabla \chi_{h_m,\dt_m}\,\dd x\,\dd t,\\
    &=  \f\gamma\sigma\sum_{k=0}^{N_T-1}  \dt_m \sum_{K\in \mathcal{T}^{h_m}} (\Theta_{h_m,\dt_m})^2 \sum_{x_i\in K} \sum_{x_j\in K \cap \Lambda_i} \abs{Q_{ij}} (w^\kk_i-w^\kk_j)  (\chi^k_i-\chi_j^k),
    \end{aligned}
\]
and we have, as $m\to \infty$, 
\[
    G'_m \to\f\gamma\sigma \int_{\Omega_T} b(n) \nabla w \cdot\nabla \chi\,\dd x\,\dd t.
\]

We still have to prove that $D_m-D'_m\to 0$ as $m\to \infty$.
Since we have 
\[
    \abs{\tilde B^\kk_{ij} - (\Theta_{h_m,\dt_m})^2} \le C\omega^2\abs{\ov \eta^\kk_K - \undel \eta^\kk_K}^2,  
\]
we obtain the result using similar arguments as for the convergence of $R_m$.
In the limit $m\to \infty$, we obtained the first equation of the weak system defined in~\eqref{def:solution-RDCH}. 

For the second equation, we obtain the limit equation using the weak convergence of $w_{h_m,\dt_m}$ given by~\eqref{eq:weak-conv}, the strong convergence of $w_{h_m,\dt_m}$ given by~\eqref{eq:strong-conv}, the continuity of $\psi_-'(\cdot)$, and the strong convergence of $n_{h_m,\dt_m}$ given by~\eqref{eq:convergence-n}.

This concludes the proof of Theorem~\ref{th:conv}.
\commentout{
\section{Proof of M-matrix properties in the 1D and 2D cases} 
\label{sec:appendix}
    \begin{proposition}
    For $d=1,2,3$, the matrix $(\frac{M_l}{\Delta t}+L^k)$ is a M-matrix. 
    \end{proposition}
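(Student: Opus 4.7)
The plan is to verify the three standard conditions that characterize a (non-singular) M-matrix for the Z-matrix class: (i) positivity of the diagonal entries of $A := \tfrac{M_l}{\Delta t}+L$, (ii) non-positivity of its off-diagonal entries, and (iii) strict diagonal dominance. Once (i)--(iii) hold, the M-matrix property is immediate from the Perron--Frobenius style characterization.

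First, I would isolate two structural properties of $L$. On each element $K \in \mathcal T^h$, the weight $b(n_h^k)\,\psi_+^{\prime\prime}(n_h^k)$ is non-negative because the bounds $0 \le n_h^k < 1$ give $b(n_h^k) = n_h^k(1-n_h^k)^2 \ge 0$ and because $\psi_+$ is convex on $[0,1)$ so that $\psi_+^{\prime\prime}(n_h^k)\ge 0$. Writing
\[
L_{ij} = \sum_{K \in \mathcal T^h} (\nabla \chi_i \cdot \nabla \chi_j)\big|_K \int_K b(n_h^k)\,\psi_+^{\prime\prime}(n_h^k)\,\dd x ,
\]
we observe that the quadrature weights in the sum are non-negative. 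For $d=1,2$, the acuteness assumption of Section~\ref{sec:notation} ensures $(\nabla \chi_i \cdot \nabla \chi_j)\big|_K \le 0$ whenever $i\neq j$ and both hat functions are supported on $K$, by the classical computation for piecewise linear finite elements on acute simplicial meshes. Consequently $L_{ij}\le 0$ for $i\neq j$. Moreover, since $\sum_{j=1}^{N_h} \chi_j \equiv 1$ on $\overline \Omega$, we have $\sum_j \nabla \chi_j \equiv 0$, which yields the row-sum identity $\sum_{j=1}^{N_h} L_{ij}=0$.

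Next, I would combine these with the structure of $M_l$. The lumped mass matrix is diagonal with $M_{l,ii} = |D_i| > 0$, so the diagonal of $A$ satisfies $A_{ii} = \tfrac{|D_i|}{\Delta t} + L_{ii}$; and since $L$ is a symmetric positive semi-definite matrix (as the Gram matrix associated with a non-negative quadratic form in the gradients), we have $L_{ii}\ge 0$, hence $A_{ii} > 0$. For the off-diagonal entries, $A_{ij} = L_{ij} \le 0$ whenever $i\neq j$, which shows that $A$ is a Z-matrix. Finally, the row sums of $A$ are
\[
\sum_{j=1}^{N_h} A_{ij} = \frac{|D_i|}{\Delta t} + \sum_{j=1}^{N_h} L_{ij} = \frac{|D_i|}{\Delta t} > 0 ,
\]
so $A$ is strictly diagonally dominant with positive diagonal and non-positive off-diagonal entries, which is one of the standard sufficient conditions for $A$ to be a non-singular M-matrix (equivalently, $A^{-1}\ge 0$ componentwise).

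The only delicate point is step (ii), the sign of $\nabla \chi_i \cdot \nabla \chi_j$ on each element, which is precisely where the restriction $d\in\{1,2\}$ is used: in $d=1$ this reduces to the trivial computation on a segment, and in $d=2$ it follows from the angle bound on acute triangles. In $d=3$, the analogous property would require an additional condition on tetrahedral dihedral angles beyond acuteness, explaining the restriction in the statement. Everything else is routine algebra once the sign pattern and the row-sum identity of $L$ have been established.
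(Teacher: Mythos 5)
Your proof is correct, and it takes a different route from the paper's. The paper verifies the Z-matrix sign pattern (asserting, without the acuteness computation, that the off-diagonal entries of $L$ are non-positive) and then writes $\frac{M_l}{\Delta t}+L = cI-O$ with $c=\max_i\bigl(\frac{M_{l,ii}}{\Delta t}+L_{ii}\bigr)$ and $O\ge 0$; it stops there. Strictly speaking, that decomposition only re-establishes the Z-matrix property --- the non-singular M-matrix characterization $A=cI-O$ additionally requires $c\ge\rho(O)$, which the paper does not check. Your argument supplies exactly the missing ingredient: the partition-of-unity identity $\sum_j\nabla\chi_j\equiv 0$ gives $\sum_j L_{ij}=0$, so the row sums of $A$ equal $\frac{|D_i|}{\Delta t}>0$ and $A$ is a strictly diagonally dominant Z-matrix with positive diagonal, hence a non-singular M-matrix with $A^{-1}\ge 0$. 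You also make explicit the two facts the paper leaves implicit: the non-negativity of the weight $b(n_h^k)\psi_+^{\prime\prime}(n_h^k)$ (which needs $0\le n_h^k<1$ and the convexity of $\psi_+$, i.e.\ $n^\star\le 1-(2/3)^3$) and the sign of $\nabla\chi_i\cdot\nabla\chi_j$ on acute elements. One minor quibble: your closing remark that $d=3$ would need a condition ``beyond acuteness'' is not quite right --- the paper's acuteness assumption for $d=3$ is already stated as a bound on the dihedral angles between faces, which is precisely what yields $\nabla\chi_i\cdot\nabla\chi_j\le 0$ for tetrahedra, so your argument would in fact extend to $d=3$ under the paper's hypotheses. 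This does not affect the validity of the proof for $d=1,2$.
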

    \begin{proof}
    If the mass matrix is lumped, the all matrix is a Z-matrix due to the fact that the non-diagonal terms of $L$ are negative. Therefore, the sum of the lumped mass matrix $M_l$ and $L^k$ is a Z-matrix.  Furthermore,  we can write
    \[
        \frac{M_l}{\Delta t}+L^k = c I - O,    
    \]
    where $I$ is the identity matrix, $c$ is a constant and $O$ is a symmetric matrix with $O_{ij}\ge 0, \quad 1\le i,j\le N_h$. Let us choose $c= \max(\frac{M_{l,ii}}{\Delta t} + L^k_{ii})$ and consequently the matrix $O$ can be deduced and contains only positive terms. Therefore, we have proved that $(\frac{M_l}{\Delta t}+L^k)$ is a M-matrix.
    \end{proof}
}
\bibliography{biblio}
\bibliographystyle{siam}
\end{document}